\documentclass[11pt]{amsart}
\usepackage{amssymb}
\usepackage{amscd}
\usepackage{amsfonts}
\usepackage{amsmath}
\usepackage{verbatim}
\usepackage{wasysym}
\usepackage[all]{xy}

\newcommand{\Span}{\hbox{\rm span}}
\newcommand{\Hol}{\hbox{\sl Hol}}

\theoremstyle{plain}
\newtheorem{definition}{Definition}[section]
\newtheorem{lemma}[definition]{Lemma}
\newtheorem{thm}[definition]{Theorem}
\newtheorem{prop}[definition]{Proposition}
\newtheorem{cor}[definition]{Corollary}
\newtheorem{rem}[definition]{Remark}
\sloppy

%%%%%%%%%%%%%%%%% SIMBOLI MATEMATICI %%%%%%%%%

%%%%%%%%%%%%%%% LETTERE GOTICHE%%%%%%%%%%%%%%%

 \def\ga{{\mathfrak a}}

 \def\ggg{{\mathfrak g}} 
 \def\gh{{\mathfrak h}}

 \def\gm{{\mathfrak m}}

 \def\gz{{\mathfrak z}}

%%%%%%%%%%%%%%%%%%%% GRUPPI %%%%%%%%%%%%%%%%%%%%%
\newcommand{\Id}{{\rm Id}}

\newcommand{\SL}{{\rm SL}}

\newcommand{\vol}{{\it vol}}

\newcommand{\tr}{{\hbox{tr}}}

\newcommand{\Ric}{{\hbox{\it ric}}}
\newcommand{\scal}{{\hbox{\it scal}}}
%\newcommand{\Id}{{\hbox{\rm Id}}}

%%%%%%%%%%%%%%%%%% R,C,Z %%%%%%%%%%%%%%%%%%%%%%%
\newcommand\C{{\mathbb C}}
\newcommand\quater{{\mathbb H}}
\newcommand\R{{\mathbb R}}
\newcommand\Z{{\mathbb Z}}

%%%%%%%%%%%%%%% LETTERE GRECHE %%%%%%%%%%%%%%%
\def\G{{\Gamma}}
\def\a{{\alpha}}
\def\b{{\beta}}
\def\c{{\gamma}}
\def\d{{\delta}}

\def\m{{\mu}}

\def\s{{\sigma}}
\def\diver{{{\rm div}\,}}
\def\im{{\,{\rm Im}\,}}
\def\ad{{\,{\rm ad}\,}}
\def\Ad{{\,{\rm Ad}\,}}

%%%%%%%%%%%%%%%%%%%%%%%%%%%%%%%%%%%%%%%%%%%%%%%%%%%%
\begin{document}
\title{Homogeneous para-K\"ahler Einstein manifolds}
\author{D.V. Alekseevsky, C. Medori\and A. Tomassini}
\address{The University of Edinburgh\\
James Clerk Maxwell Building\\
The King's Buildings\\
Mayfield Road\\
Edinburgh\\
EH9 3JZ\\
UK}
\email{D.Aleksee@ed.ac.uk}
\address{Dipartimento di Matematica\\ Universit\`a di Parma\\ Viale G.P. Usberti, 53/A\\ 43100
 Parma\\ Italy}
\email{costantino.medori@unipr.it} \email{adriano.tomassini@unipr.it}
%\address{Dipartimento di Matematica\\ Universit\`a di Parma\\ Viale G.P. Usberti, 53/A\\ 43100
% Parma\\ Italy}
%\email{adriano.tomassini@unipr.it}
\keywords{para-K\"ahler manifold, invariant Einstein metric, bi-Lagrangian structure,
 adjoint orbit, Koszul form}
\subjclass{53C15, 53C56}
\thanks{This work was partially supported by Leverhulme Trust, EM/9/2005/0069, by the M.I.U.R. Project
``Geometric Properties of Real and Complex Manifolds'' and by G.N.S.A.G.A. of I.N.d.A.M.}
\thanks{The second and the third author would like to thank the School of Mathematics
of the University of Edinburgh for its kind hospitality}
\maketitle

\begin{center}
Dedicated to E.B.Vinberg on the occasion of his 70th birthday
\end{center}
\begin{abstract}
A para-K\"ahler manifold can be defined as a pseudo-Riemannian
manifold $(M,g)$ with a parallel skew-symmetric para-complex
structures $K$, i.e. a parallel field of skew-symmetric
endomorphisms with $ K^2 = \mathrm{Id} $ or, equivalently, as a
symplectic manifold $(M,\omega)$ with a bi-Lagrangian structure
$L^\pm$, i.e. two complementary integrable
Lagrangian distributions. \\
A homogeneous manifold $M = G/H$ of a semisimple Lie group $G$
admits an invariant para-K\"ahler structure $(g,K)$ if and only
if it is a covering of the adjoint orbit $\mathrm{Ad}_Gh$ of a
semisimple element $h.$ We give a description of all invariant
para-K\"ahler structures $(g,K)$ on a such homogeneous manifold.
Using a para-complex analogue of basic formulas of K\"ahler
geometry, we prove that any invariant para-complex structure $K$
on $M = G/H$ defines a unique para-K\"ahler Einstein structure
$(g,K)$ with given non-zero scalar curvature. An explicit formula
for the Einstein metric $g$ is given.

A survey of recent results on para-complex geometry is included.
\end{abstract}
\tableofcontents
\section{Introduction.}
An {\bf almost para-complex structure} on a $2n$-dimensional
manifold $M$ is a field $K$ of involutive endomorphisms ($K^2=1$)
with $n$-dimensional eigendistributions $T^{\pm}$ with
eigenvalues $\pm 1$. \\
(More generally, any field $K$ of involutive
endomorphisms is called an {\bf almost para-complex structure in
weak sense}).\\
If the $n$-dimensional eigendistributions $T^{\pm}$ of $K$ are involutive,
 the field $K$ is called a {\bf para-complex structure}. This is equivalent
 to the vanishing of the Nijenhuis tensor $N_K$ of the $K$. \\
 In other words, a para-complex structure on $M$ is the same as a pair
 of complementary $n$-dimensional integrable distributions $ T^{\pm} M
 $.\\
 A decomposition $M = M_+ \times M_-$ of a manifold $M$ into a
 direct product defines on $M$ a para-complex structure $K$ in the weak
 sense with
 eigendistributions $T^+ = TM_+$ and $T^- = TM_-$ tangent to the
 factors. It is a para-complex structure in the factors $M_{\pm}$
 have the same dimension.\\
 Any para-complex structure locally can be obtained by this
 construction. Due to this, an (almost) para-complex structure is also called
 an {\bf (almost) product structure}.\\
A manifold $M$ endowed with a para-complex structure $K$ admits an
atlas of para-holomorphic coordinates (which are functions with
values in the algebra $C = \R + e \R, \, e^2 =1,$ of para-complex
numbers) such that the transition functions are
para-holomorphic.

One can define para-complex analogues of different composed
geometric structures which involve a complex structure (Hermitian,
K\"ahler, nearly K\"ahler, special K\"ahler, hypercomplex, hyper-K\"ahler,
quaternionic, quaternionic K\"ahler structures, $CR$-structure
etc.) changing a complex structure $J$ to a para-complex
structure $K$. Many results of the geometry of such structures
remain valid in the para-complex case. On the other hand, some
para-complex composed structures admit different interpretation
as 3-webs, bi-Lagrangian structure
and so on.\smallskip

The structure of the paper is the following.\newline
We give a survey of known results about para-complex geometry in section 2. Sections 3,4,5
contain an elementary introduction to para-complex and para-K\"ahler geometry.

The bundle $\Lambda^r(T^*M \otimes C)$ of $C$-valued $r$-form is
decomposed into a direct sum of $(p,q)$-forms $\Lambda^{p,q}M$ and
the exterior differential $d$ is represented as a direct sum $d =
\partial + \bar
\partial$. Moreover, a para-complex analogue of the Dolbeault Lemma
holds (see \cite{CMMS1} and subsection \ref{differentialforms}).\\
A {\bf para-K\"ahler structure} on a manifold $M$ is a pair $(g,K)$
where $g$ is a pseudo-Riemannian metric and $K$ is a parallel
skew-symmetric para-complex structure. A pseudo-Riemannian
$2n$-dimensional manifold $(M,g)$ admits a para-K\"ahler structure
$(g,K)$ if and only if its holonomy group is a subgroup of
$GL_n(\R) \subset SO_{n, n} \subset GL_{2n}(\R)$.\\
If $(g,K)$ is a para-K\"ahler structure on $M$, then $\omega = g \circ K$ is a symplectic structure
and the $\pm 1$-eigendistributions $T^{\pm} M$ of $K$ are two integrable $\omega$-Lagrangian distributions.
 Due to this, a para-K\"ahler structure can be identified with a bi-Lagrangian structure $(\omega, T^{\pm} M)$
 where $\omega$ is a symplectic structure and $T^{\pm}M$ are two integrable Lagrangian distributions.
In section 4 we derive some formulas for the curvature and Ricci curvature of a para-K\"ahler structure
$(g,K)$ in terms of para-holomorphic coordinates. In particular, we show that, as in the K\"ahler case, the Ricci
tensor $S$ depends only on the determinant of the metric tensor $g_{\alpha \bar \beta}$ written in terms of
 para-holomorphic coordinates.\\
In section 5, we consider a homogeneous manifold $(M = G/H, K, {\mathrm{vol}})$ with an invariant para-complex
structure
$K$ and an invariant volume form $\mathrm{vol}$. We establish a formula which expresses the pull-back $\pi^* \rho$ to $G$
of the Ricci form $\rho = S \circ K$ of any invariant para-K\"ahler structure $(g,K)$ as the differential of a left-invariant 1-form $\psi$,
called the {\bf Koszul form}.\\
In the last section, we use the important result by Z.\ Hou, S.\ Deng, S.\ Kaneyuki and K.\ Nishiyama (see \cite{HDK}, \cite{HDKN})
stating that a homogeneous manifold $M= G/H$ of a semisimple Lie group $G$ admits an invariant para-K\"ahler structure if and only if it
is a covering of the adjoint orbit $\Ad_G h$ of a semisimple element $h\in \ggg = {\mathrm{Lie}}(G)$. \\
We describe all invariant para-complex structures $K$ on $M= G/H$ in terms of fundamental gradations of the Lie algebra
$\ggg$ with $\ggg_0 = \gh := \mathrm{Lie}(H)$ and we show that they are consistent with any
invariant symplectic structure $\omega$ on $G/H$ such that $(g = \omega \circ K,\, K)$ is an invariant
para-K\"ahler structure. This gives a description of all invariant para-K\"ahler structures on homogeneous
manifolds of a semisimple group $G$. An invariant para-complex structure on $M = G/H$ defines an Anosov flow,
but a theorem by Y. Benoist and F. Labourie shows that this flow can not be push down to any {\em smooth} compact quotient $\Gamma\setminus G/H$.
 We give a complete description of invariant para-K\"ahler-Einstein metrics on homogeneous manifolds of a
 semisimple Lie group and prove the following theorem.
 \begin{thm} Let $M = G/H$ be a homogeneous manifold of a semisimple Lie group $G$ which admits an invariant
 para-K\"ahler structure
 and $K$ be the invariant paracomplex structure on $M$. Then there exists a unique invariant symplectic structure
$\rho $ which is the push-down of the differential $ d \psi$ of
the Koszul 1-form $\psi$ on $G$ such that
 $ g_{\lambda, K} := \lambda^{-1}\rho\circ K$
is an invariant para-K\"ahler Einstein metric with Einstein constant $\lambda \neq 0$ and this construction gives all
invariant para-K\"ahler-Einstein metrics on $M$.
\end{thm}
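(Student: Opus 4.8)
The plan is to turn the Einstein equation into a purely Lie-algebraic non-degeneracy statement by means of the Koszul form of Section 5. Throughout, having fixed the invariant para-complex structure $K$, we use the theorem of Hou--Deng--Kaneyuki--Nishiyama and the description of Section 6: there is a fundamental gradation $\ggg=\bigoplus_{i=-k}^{k}\ggg_i$ with $\ggg_0=\gh$ and $\ggg/\gh\cong\gn^+\oplus\gn^-$, where $\gn^{\pm}=\bigoplus_{i>0}\ggg_{\pm i}$ are the subalgebras tangent to the eigendistributions $T^{\pm}M$, and $B$ denotes the Killing form. First I would pin down the Koszul form. Since $B$ pairs $\ggg_i$ with $\ggg_{-i}$ and, for $i\neq 0$, the $\Ad(H)$-module $\ggg_i$ admits no non-zero invariant vector (a Cartan subalgebra $\gt\subset\gh$ acts there without zero weight), the left-invariant Koszul $1$-form $\psi$ of $(K,\vol)$, regarded as an element of $\ggg^*$, vanishes on each $\ggg_i$ with $i\neq 0$; hence $\psi=B(e_\psi,\cdot)$ for a unique $e_\psi\in Z(\ggg_0)$, with $\psi|_{\gt}=c\sum_{\deg\alpha>0}\alpha$ for a non-zero constant $c$ coming from the defining formula $\psi|_{\ggg_0}(X)\propto\tr(\ad(X)|_{\gn^+})$. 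The $\Ad(H)$-invariance of $\psi$ makes $d\psi$ horizontal for $G\to G/H$, so it descends to a unique invariant closed $2$-form $\rho$ on $M$; and because two invariant volume forms on a homogeneous space differ by a constant, $\psi$, and hence $\rho$, depends only on $K$.

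Next I would show that $\rho$ is symplectic and that $g_{\lambda,K}=\lambda^{-1}\rho\circ K$ is an invariant para-K\"ahler metric for every $\lambda\neq 0$. For left-invariant forms one has $d\psi(X,Y)=-\psi([X,Y])=-B([e_\psi,X],Y)$, so the radical of $\rho$ on $\gn^+\oplus\gn^-$ is $\ker(\ad e_\psi)\cap(\gn^+\oplus\gn^-)$; taking $e_\psi\in\gt$, this is spanned by the root vectors $e_\alpha$ with $\alpha(e_\psi)=0$, i.e.\ with $\langle\sum_{\deg\beta>0}\beta,\alpha^{\vee}\rangle=0$. This cannot occur when $\deg\alpha\neq 0$: by positivity of the coefficients of $\alpha$ in the simple roots, the claim reduces to simple roots $\alpha$ outside the Levi $\ggg_0$, for which $\langle\sum_{\deg\beta>0}\beta,\alpha^{\vee}\rangle=2\langle\delta-\delta_0,\alpha^{\vee}\rangle\geq 2$, where $\delta$ and $\delta_0$ are the half-sums of the positive roots of $\ggg$ and of $\ggg_0$ (use $\langle\delta,\alpha^{\vee}\rangle=1$ and $\langle\delta_0,\alpha^{\vee}\rangle\leq 0$). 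Hence $\ad e_\psi$ is invertible on $\gn^+\oplus\gn^-$ and $\rho$ is non-degenerate. Moreover $\rho(T^{\pm},T^{\pm})=0$, since for $X,Y\in\gn^{\pm}$ we have $[X,Y]\in\gn^{\pm}$, which is $B$-orthogonal to $e_\psi\in\ggg_0$. Thus $T^{\pm}M$ are complementary integrable $\rho$-Lagrangian distributions, and by the bi-Lagrangian description of para-K\"ahler structures recalled in the Introduction (a symplectic form whose integrable $K$ has Lagrangian eigendistributions yields a para-K\"ahler metric via $g=\omega\circ K$), $g_{\lambda,K}$ is an invariant para-K\"ahler metric.

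Finally, and this is the crux, by the para-complex Koszul formula of Section 5 the Ricci form of \emph{any} invariant para-K\"ahler structure $(g,K)$ on $M$ pulls back to $d\psi'$, where $\psi'$ is the Koszul form of $(K,\vol_g)$; by the first step $d\psi'=d\psi$ independently of $g$, so the Ricci form of every invariant para-K\"ahler structure with underlying $K$ is the single form $\rho$. (This is the para-complex analogue of the K\"ahler fact, proved in Section 4, that $S$ depends only on $\det(g_{\alpha\bar\beta})$, combined with the constancy of the ratio of two invariant volume forms.) Therefore $(g,K)$ is Einstein with constant $\lambda\neq 0$ if and only if $S=\lambda g$, i.e.\ if and only if $\rho=S\circ K=\lambda\,(g\circ K)$, i.e.\ if and only if $g\circ K=\lambda^{-1}\rho$; applying $K$ and using $K^2=\Id$, this is precisely $g=\lambda^{-1}\rho\circ K=g_{\lambda,K}$. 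This establishes at once that each $g_{\lambda,K}$ is para-K\"ahler Einstein with Einstein constant $\lambda$ and that the family $\{g_{\lambda,K}\}_{\lambda\neq 0}$ exhausts the invariant para-K\"ahler Einstein metrics on $M$ compatible with $K$; the uniqueness of $\rho$ is automatic, since $\rho=\lambda\,g_{\lambda,K}\circ K$ is recovered from any member of the family. I expect the delicate point to be exactly this last step, namely setting up the para-complex Koszul formula carefully — the horizontality and descent of $d\psi$, the identification of $e_\psi\in Z(\ggg_0)$, and the independence of the volume normalisation — so that the Ricci form of \emph{every} invariant para-K\"ahler metric is literally the fixed form $\rho$; the non-degeneracy of $\rho$, via the root estimate on $\sum_{\deg\beta>0}\beta$, is the other step that requires care.
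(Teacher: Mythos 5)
Your proposal is correct and follows essentially the same route as the paper: reduction via the Hou--Deng--Kaneyuki--Nishiyama theorem and the gradation description of $K$, the identification of the Ricci form of every invariant para-K\"ahler metric with the canonical form of $(K,\vol)$ coming from the Koszul formula, the computation $\psi=2\sum_{\deg\alpha>0}\alpha=2\sum a_{i_h}\pi_{i_h}$ with strictly positive coefficients, and non-degeneracy of $d\psi$ from the resulting root estimate. The only differences are cosmetic (you check $\rho(T^\pm,T^\pm)=0$ directly from $[\gn^\pm,\gn^\pm]\perp_B\ggg_0$ and kill $\psi|_{\ggg_i}$ by a weight argument, where the paper invokes its general proposition on the forms $\omega_z=B(z,[\cdot,\cdot])$ and the nilpotency of $\ad_{E_\alpha}$).
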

\section{A survey on para-complex geometry.}
\subsection{Para-complex structures.}

The notion of {\bf almost para-complex structure} (or {\bf almost product
structure}) on a manifold was introduced by P.K. Ra\v sevski\u{\i}
\cite{Rash} and P. Libermann \cite{L1}, \cite{L2}, where the problem
of integrability had been also discussed. The paper \cite{CFG}
contains a survey of further results on para-complex structure with more then 100 references.
 The papers  \cite{CGM},  \cite{EST}  contain survey of para-Hermitian and para-K\"ahler geometries.
 Different  generalizations of   Hermitian geometry and contact geometry are   considered in   \cite{{Kir}}.\\
 Note that a para-complex structure $K$ on a manifold $M$ defines a new Lie algebra structure
 in the space $\mathfrak{X}(M)$ of vector fields given by
 $$ [X,Y]_K := [KX,Y]+ [X,KY] - K[X,Y]$$
 such that the map
 $$ ( \mathfrak{X}(M),[.,.]_K ) \rightarrow (\mathfrak{X}(M),[.,.]), \, \, X \mapsto KX $$
 is a homomorphism of Lie algebras.\\
 Moreover, $K$ defines a new differential $d_K$ of the algebra $\Lambda(M)$ of differential forms
 which is a derivation of $\Lambda(M)$ of degree one with $d_K^2 =0$. It is given by
 $$d_K := \{d,K \}:= d \circ \iota_K + \iota_K \circ d $$
 where $\iota_K $ is the derivation associated with $K$ of the supercommutative algebra $\Lambda(M)$ of degree $-1$
 defined by the contraction. (Recall that the superbracket of two derivations is a derivation).\\
 In \cite{LS}, the authors define the notion of para-complex affine immersion $f : M \rightarrow M'$
 with transversal bundle $N$ between para-complex manifolds $(M,K), \, (M',K')$ equipped with torsion free
 para-complex connections $\nabla, \nabla'$ and prove a theorem about  existence and
 uniqueness of such an immersion
 of $(M,K,\nabla)$ into the affine space $M' = \mathbb{R}^{2m + 2n}$ with the standard para-complex structure and flat connection.\\
 In \cite{S1}, the notion of para-$tt^*$-bundle over an (almost) para-complex $2n$-dimensional manifold $(M,K)$  is defined  as  a vector bundle $\pi : E \rightarrow M$ with a  connection $\nabla$ and a $\hbox{\rm End}(E)$-valued 1-form
 $S$ such that 1-parametric family of connections
 $$ \nabla^t_X = \nabla_X + \cosh (t) S_X + \sinh (t) S_{KX}, \,\, X \in TM$$
 is flat. It is a para-complex version of $tt^*$-bundle over a complex manifold defined in the context of quantum field theory in \cite{CV} , \cite{D}
 and studied from differential-geometric point of view in \cite{CS}.
 In \cite{S}, \cite{S1}, \cite{S2}, the author studies properties of para-$tt^*$-connection $\nabla^t$ on the tangent
 bundle $E = TM$ of an (almost) para-complex manifold $M$. In particular, he shows that nearly para-K\"ahler and special
 para-complex structures on $M$ provide para-$tt^*$-connections. It is proved also that a para-$tt^*$-connection
 which preserves a metric or a symplectic form determines a para-pluriharmonic map
 $f : M \rightarrow N$ where $N = Sp_{2n}(\mathbb{R})/U^n(C^n)$ or, respectively, $N = SO_{n,n}/U^n(C^n)$
 with invariant pseudo-Riemannian metric. Here $U^n(C^n)$ stands for para-complex analogue of the unitary group.

\subsubsection*{Generalized para-complex structures}
 Let $\mathcal{T}(M) : = TM \oplus T^*M$ be the generalized tangent bundle of a manifold $M$ i.e. the direct sum of the tangent and
 cotangent bundles equipped with the natural metric $g$ of signature $(n,n)$,
$$ g (X, \xi), (X', \xi') := \frac12( \xi(X') + \xi'(X)).
 $$
The { \bf Courant bracket} in the space $\Gamma(\mathcal{T}(M))$ of sections is defined by
$$
[(X,\xi), (X', \xi')] = \left([X,X'], \mathcal{L}_{X}\xi' - \mathcal{L}_{X'}\xi -
\frac12d( \xi'(X) - \xi(X'))\right)
$$
 where $\mathcal{L}_X$ is the Lie derivative in the direction of a vector field $X$.
 A maximally $g$-isotropic subbundle $ D \subset \mathcal{T}(M) $ is called a {\bf Dirac structure} if its
 space of sections $\Gamma(D)$ is closed under the Courant bracket.\\
 Changing in the definition of the para-complex structure the tangent bundle $TM$ to the
 generalized tangent bundle $\mathcal{T}(M)$ and the Nijenhuis bracket to the Courant bracket,
 A. Wade \cite{W} and I. Vaisman \cite{V} define the notion of a
 generalized para-complex structure which unifies the notion of symplectic structure, Poisson structure and
 para-complex structure and similar to the Hitchin's definition of a generalized complex structure (see e.g \cite{G}, \cite{Hi}).\\
 A { \bf generalized para-complex structure} is a field $K$ of involutive skew-symmetric endomorphisms of the bundle $\mathcal{T}(M)$
 whose $\pm 1$-eigendistributions $T^\pm$ are closed under the Courant bracket.\\
 In other words, it is a decomposition
 $\mathcal{T}(M) = T^+ \oplus T^-$ of the generalized tangent bundle into a direct sum of two Dirac structures
 $T^\pm$.\\
 Generalized para-complex structures naturally appear in the context of mirror symmetry: a semi-flat generalized complex
 structure on a $n$-torus bundle with sections over an $n$-dimensional manifold $M$ gives rise to a
 generalized para-complex structure on $M$, see \cite{Ben-B}.
 I. Vaisman \cite{V} extends the reduction theorem of Marsden-Weinstein type to generalized
 complex and para-complex structures and gives the characterization of the submanifolds that inherit an induced structure
 via the corresponding classical tensor fields.
\subsection{Para-hypercomplex (complex product) structures.}
 An {\bf (almost) para-hypercomplex} (or an {\bf almost complex product structure}) on a $2n$-dimensional manifold $M$
 is a pair $(J,K)$ of an anticommuting (almost) complex structure $J$ and an (almost) para-complex structure
 $K$. The product $I = JK$ is another (almost) para-complex structure on $M$. If the structure $J,K$ are integrable,
 then $I =JK$ is also an (integrable) para-complex structure and the pair $(J,K)$ or triple $(I,J,K)$ is called
 a {\bf para-hypercomplex structure}. An (almost) para-hypercomplex structure is similar to an (almost)
 hypercomplex structure which is defined as a pair of anticommuting (almost) complex structures. Like for almost
 hypercomplex structure, there exists a canonical connection $\nabla$, called the {\bf Obata connection},
 which preserves a given almost para-hypercomplex structure (i.e. such that $\nabla J = \nabla K = \nabla I =0$).
 The torsion of this connection vanishes if and only if $N_J = N_K= N_I= 0 $ that is $(J,K)$
 is a para-hypercomplex structure.\\
 At any point
 $x \in M$ the endomorphisms $I,J,K$ define a standard basis of the Lie subalgebra
 $\mathfrak{sl}_2(\mathbb{R}) \subset \mathrm{End}(T_xM)$.
 The conjugation by a (constant)
 matrix $A \in SL_2(\mathbb{R})$ allows to associate with an (almost) para-hypercomplex structure $(J,K)$
 a 3-parametric family of (almost) para-hypercomplex structures which have the same
 Obata connection.\\
 Let $TM = T^+ \oplus T^-$ be the eigenspace decomposition for the almost para-complex structure $K$. Then
 the almost complex structure $J$ defines the isomorphism $J : T^+ \rightarrow T^-$ and we can identify
 the tangent bundle $TM$ with a tensor product $TM = \mathbb{R}^2 \otimes E$ such that the endomorphisms
 $J,K,I$ acts on the first factor $\mathbb{R}^2$ in the standard way :
 \begin{equation} \label{IJKmatrices}
 J= \left(
 \begin{array}{cc}
 0 & -1 \\
 1 & 0 \\
 \end{array}
 \right),\,\,
K= \left(
 \begin{array}{cc}
 1 & 0 \\
 0 & -1 \\
 \end{array}
 \right),\,\,
 I= JK = \left(
  \begin{array}{cc}
  0 & 1 \\
  1& 0 \\
  \end{array}
 \right).
 \end{equation}

 Any basis of $E_x$ defines a basis of the tangent space $T_xM$ and the set of such (adapted) bases
 form a $GL_n(\mathbb{R})$-structure (that is a principal $GL_n(\mathbb{R})$-subbundle of the frame bundle of $M$).
 So one can identify an almost para-hypercomplex structure with a $GL_n(\mathbb{R})$-structure
 and a para-hypercomplex structure with a 1-integrable $GL_n(\mathbb{R})$-structure. This means that
 $(J,K)$ is a para-hypercomplex structure. The basic facts of the geometry of para-hypercomplex manifolds are
 described in \cite{An1}, where also some examples are considered.\newline
 Invariant para-hypercomplex structures on
 Lie groups are investigated in \cite{An} - \cite{AnS}. Algebraically, the construction of left-invariant
 para-hypercomplex structures on a Lie group $G$ reduces to the decomposition of its Lie algebra $\mathfrak{g}$
 into a direct sum of subalgebras $\mathfrak{g}^+, \, \mathfrak{g}^-$ together with the construction of a complex structure
 $J$ which interchanges $\mathfrak{g}^+, \mathfrak{g}^-$. It is proved in \cite{AnS} that the Lie algebras $\mathfrak{g}^{\pm}$
 carry structure of left-symmetric algebra. Applications to construction of hypercomplex and hypersymplectic
 (or para-hyperK\"ahler) structures are considered there.\smallskip

   Connection between a para-hypercomplex structure $(J,K)$ on a $2n$-dimensional manifold
   $M$  and  a {\bf 3-web}  and special $G$-structures are  studied  in \cite{MN}.
   Recall  that  a $3$-web on  a $2n$-dimensional manifold $M$ is a triple $(V_1,V_2,V_3) $
   of mutually complementary $n$-dimensional integrable distributions. A para-hypercomplex structure $(J,K)$  on
   $M$ defines a $3$-web $T^+,T^-,S^+$, where $ T^\pm, S^\pm$ are
  the eigendistributions  of the para-complex structures $K$ and $I = JK$.  Conversely, let $(V_1,V_2,V_3) $ be a 3-web.
   Then the decomposition $TM = V_1 + V_2$
 defines a para-complex structure $K$ and the distribution $V_3$ is the graph of a canonically defined isomorphism
 $f : V_1 \rightarrow V_2$ that is $V_3 = (1 + f)V_1$. The $n$-dimensional distribution $V_4 : = (1 - f^{-1})V_2$
 gives a direct sum decomposition $TM = V_3 + V_4 $ which defines another almost para-complex structure $I$ which anticommute with
 $K$. Hence, $(J = IK, K)$ is almost hypercomplex structure. It is integrable if and only if
 the distribution $V_4 = (1 - f^{-1})V_2$ associated with the 3-web $(V_1,V_2,V_3)$ is integrable.
 So, any 3-web defines an almost para-hypercomplex structure which is para-hypercomplex structure if
 the distribution $V_4$ is integrable. \\
  The monograph \cite{AkS} contains a detailed exposition of the
  theory of three-webs, which was    started  by Bol, Chern and Blaschke and continued by M.A.Akivis and his school.
  Relations with  the theories of $G$-structures, in particular Grassmann structures, symmetric spaces,
  algebraic geometry, nomography, quasigroups, non-associative algebras and  differential equations of
  hydrodynamic type are discussed.
\subsection{Para-quaternionic structures.}

 An {\bf almost para-quaternionic structure} on a $2n$-dimensional manifold $M$ is defined
 by a 3-dimensional subbundle $Q$ of the bundle $\hbox{\rm End}(TM)$ of endomorphisms, which is locally generated
 by an almost para-hypercomplex structure $(I,J,K)$, i.e. $Q_x = \mathbb{R}I_x + \mathbb{R}J_x + \mathbb{R}K_x$
 where $x \in U$ and $U \subset M$ is a domain where $I,J,K$ are defined.
 If $Q$ is invariant under a torsion free connection $\nabla$ (called a {\bf para-quaternionic connection})
 then $Q$ is called a {\bf para-quaternionic structure.} The normalizer of the Lie algebra
 $\mathfrak{sp_1(\mathbb{R})} = \Span(I_x,J_x,K_x)$ in $GL(T_xM)$ is isomorphic to
 $Sp_1(\mathbb{R}) \cdot GL_n(\mathbb{R})$. So an almost para-quaternionic structure can be considered as
 a $Sp_1(\mathbb{R}) \cdot GL_n(\mathbb{R})$-structure and a para-quaternionic structure corresponds to the case when this
 $G$-structure is 1-flat. Any para-hypercomplex structure $(I,J,K)$ defines a para-quaternionic structure
 $Q = \Span(I,J,K)$, since the Obata connection $\nabla$ is a torsion free connection which preserves
 $Q$. The converse claim is not true even locally. A para-quaternionic structure is generated by a
 para-hypercomplex structure if and only if it admits a para-quaternionic connection with holonomy group
 $\Hol \subset GL_n(\mathbb{R})$.\\
 Let $TM = H \otimes E$ be an {\bf almost Grassmann structure} of type $(2,n)$, that is a decomposition of the tangent bundle of a
 manifold $M$ into a tensor product of a 2-dimensional vector bundle $H$ and an $n$-dimensional bundle $E$.
 A non-degenerate 2-form $\omega^H$ in the bundle $H$ defines an almost para-quaternionic structure
 $Q$ in $M$ as follows. For any symplectic basis $(h_-,h_+)$ of a fibre $H_x$ we define
 $ I_x = I^H \otimes 1,\, J_x = J^H \otimes 1,\, K_x = K^H \otimes 1$
 where $I^H, \, J^H, \, K^H$ are endomorphisms of $H_x$ which in the bases $h_-, h_+$ are represented by the
 matrices (\ref{IJKmatrices}). Then, for $x \in M$, $Q_x$ is spanned by $I_x,J_x,K_x$.
\subsection{Almost para-Hermitian and para-Hermitian structures.}
 Like in the complex case, combining an (almost) para-complex structure $K$
 with a "compatible" pseudo-Riemannian metric $g$ we get
 an interesting class of geometric structures. The natural compatibility condition
 is the Hermitian condition which means that that the endomorphism $K$ is skew-symmetric with respect to $g$.
 A pair $(g,K)$ which consists of a pseudo-Riemannian metric $g$ and a skew-symmetric
 (almost) para-complex structure $K$ is called an {\bf (almost) para-Hermitian structure}.
 An almost para-Hermitian structure on a $2n$-dimensional manifold can be identified with
 a $GL_n(\mathbb{R})$-structure.
 The para-Hermitian metric $g$ has necessary the neutral signature $(n,n)$.
 A para-Hermitian manifold $(M,g,K)$ admits a unique connection
 (called the {\bf para-Bismut connection}) which preserves $g,K$ and has a skew-symmetric torsion.\\
 The {\bf K\"ahler form} $\omega : = g \circ K$ of an almost para-Hermitian manifold is not necessary closed. \\
  A special class of para-Hermitian manifolds and their submanifolds  are studied in \cite{KK}. In  the paper \cite{AK}, the authors  classify  autodual 4-dimensional
   para-Hermitian  manifolds  with constant  scalar curvature and parallel Lee form,
    which satisfy  some  extra conditions.\\
 In \cite{GM1}, the authors describe a decomposition of the space of $(0,3)$-tensors
 with the symmetry of covariant derivative $\nabla^g \omega$ (where $\nabla^g$ is the
 Levi-Civita connection of $g$) into irreducible subspaces with respect to the natural action of the
 structure group $GL_n(\mathbb{R})$. It gives an important classification of possible types
 of almost para-Hermitian structures which is an analogue of Gray-Hervella classification of
 Hermitian structures. Such special classes of almost para-Hermitian manifolds are {\bf almost
 para-K\"ahler manifolds} (the K\"ahler form $\omega$ is closed), {\bf nearly para-K\"ahler manifolds}
 ($\nabla^g\omega$ is a 3-form) and {\bf para-K\"ahler manifolds} ($\nabla^g \omega=0$). \smallskip\par
Almost para-Hermitian and almost para-K\"ahler structures naturally arise on the cotangent bundle
 $T^*M$ of a pseudo-Riemannian manifold $(M,g)$. The Levi-Civita connection defines a decomposition
 $T_\xi(T^*M) = T_\xi^{vert}(T^*M) + H_\xi$ of the tangent bundle into vertical and horizontal subbundles. This gives
 an almost para-complex structure $K$ on $T^*M$. The natural identification $T^{vert}_{\xi}M = T^*_xM = T_xM = H_{\xi},$ where
 $\xi \in T^*_xM$ and
 allows to define  also a compatible metric on $T^*M$  that is  almost para-Hermitian  structure, which is studied in  \cite{OPM}.
  Also  the  above  identification   defines an almost complex structure
  $J$ which anticommute with $K$. It  allows to define  an almost para-hyperHermitian structure on $TM$ (i.e. a pseudo-Riemannian metric
  together with two skew-symmetric  anticommuting almost para-complex structures ), studied in \cite{IV}.
 In \cite{BCGHV}, the authors  consider almost para-Hermitian manifolds with pointwise constant para-holomorphic
sectional curvature, which is defined as in the Hermitian case. They characterize these manifolds in terms of the curvature
tensor and prove that Schur lemma is not valid, in general, for an almost para-Hermitian manifold.\\
 A $(1,2)$-tensor field $T$ on an almost para-Hermitian manifold $(M,g,K)$ is called a { \bf homogeneous structure}
 if the connection $\nabla := \nabla^g - T$ preserves the tensors $g,K, T$ and the curvature tensor $R$ of the metric
 $g$. In \cite{GO}, the authors characterize reductive homogeneous manifolds with invariant
 almost para-Hermitian structure in terms of homogeneous structure $T$ and give a classification of possible types of
 such tensors $T$.\\
 Left-invariant para-Hermitian structures on semidirect and twisted products of Lie groups had been constructed and studied
 in the papers   \cite{O1}, \cite{O2}, \cite{O3}, \cite{O}.\\
 $CR$-submanifolds of almost para-Hermitian manifolds are studied in \cite{EFT}.
 Four dimensional compact almost para-Hermitian manifolds are considered  in \cite{Ma}.
 The author decomposes these manifolds into three families and establishes some relations between
 Euler characteristic and Hirzebruch indices of such manifolds.\\
 Harmonic maps between almost para-Hermitian manifolds are considered in \cite{BB}.
 In particular, the authors show that a map $f : M \rightarrow N$ between Riemannian manifolds is
 totally geodesic if and only if the induced map $df : TM \rightarrow TN$ of the tangent bundles
 equipped with the canonical almost para-Hermitian structure is para-holomorphic.\\
 In \cite{Kon} it is proved that the symplectic reduction of an almost para-K\"ahler manifolds
 $(M,g,K)$ under the action of a symmetry group $G$ which admits a momentum map
 $\mu : M \rightarrow \mathfrak{g}^*$ provides an almost para-K\"ahler structure on the
 reduced symplectic manifold $\mu^{-1}(0)/G$. \\
 An almost para-K\"ahler manifold $(M,g, K)$ can be described in terms of symplectic geometry as a
 symplectic manifold $(M, \omega = g \circ K)$ with a bi-Lagrangian
 splitting $TM = T^+ \oplus T^-$, i.e. a decomposition of the tangent bundle into a direct sum of two
 Lagrangian (in general non-integrable) distributions $T^\pm$.
 An almost para-K\"ahler manifold has a canonical symplectic connection $\nabla$ which preserves the
 distributions $T^\pm$, defined by
$$
\begin{array}{l}
\nabla_{X^{\pm}}Y^{\pm} = \frac12 \omega^{-1}(\mathcal{L}_{X^{\pm}} (\omega\circ Y^{\pm}))\,,\\[5pt]
\nabla_{X^+}Y^-= {\mathrm pr}_{T^-}[X^+,Y^-]\,,\quad \nabla_{X^-}Y^+ = {\mathrm pr}_{T^+}[X^-,Y^+]
\end{array}
$$
 for vector fields $X^{\pm},Y^{\pm} \in \Gamma(T^{\pm})$. Note that the torsion $T$ of this connection
satisfies the conditions $T(T^+, T^-)=0$.
 If the distribution $T^+$ is integrable, then $T(T^+,T^+)=0$ and the curvature $R$ satisfies the
 condition $R(T^+,T^+)=0$. So the connection $\nabla$ defines a flat torsion free connection $\nabla^L$
(which depends only on $(\omega,T^+)$) on any leaf $L$ of the integrable distribution $T^+$. So
 the leaf $L$ has the canonical (flat) affine structure.
Indeed, if $X^+ \in \Gamma(T^+)$ is a symplectic vector field, i.e. $\mathcal{L}_{X^+} \omega =0$, then
 $$\nabla^L_{X^+}Y^+= \nabla_{X^+}Y^+ = \frac{1}{2}[X^+, Y^+] , \quad \forall\, Y^+ \in \Gamma(T^+).$$
Since symplectic fields tangent to $T^+$ span the space $\Gamma(T^+)$ of vector fields tangent to $T^+$,
$\nabla^L$ is a well defined flat connection on $L$. Moreover, one can easily check that for symplectic
commuting vector fields $X^+,Y^+ \in \Gamma(T^+)$ and any $Z \in \Gamma(T^-)$, the following holds
\begin{eqnarray*}
R(X^+,Y^+)Z &=& \nabla_{X^+}[Y^+,Z]_{{T^-}}- \nabla_{Y^+}[X^+,Z]_{T^-} \\[2pt]
&=& [X^+,[Y^+,Z] _{T^-}] _{T^-}- [Y^+,[X^+,Z]_{T^-}]_{T^-} \\
&=&  [X^+,[Y^+,Z]] _{T^-}- [Y^+,[X^+,Z]]_{T^-} =0\,,
\end{eqnarray*}
which shows that $R(T^+,T^+)=0$. Here $X_{T^-}$ is the projection of $X \in TM$ onto $T^-$.

 Let $f_1, \ldots, f_n$ be independent functions which are constant on leaves of $T^+$.
 Then the leaves of $T^+$ are level sets $f_1=c_1, \ldots, f_n= c_n$ and the Hamiltonian vector fields
 $X_i:= \omega^{-1}\circ df_i$ commute and form a basis of tangent parallel fields along any leaf $L$.
>From the formula for the Poisson bracket it follows
 $$
 \{f_i,f_j \} = \omega^{-1}(df_i, df_j) =0 = df_i(X_j) = X_j \cdot f_i\,.
 $$
 By a classical theorem of A. Weinstein \cite{Wei} any Lagrangian foliation $\mathcal{T}^+$
 is locally equivalent to the cotangent bundle fibration $T^*N \rightarrow N$. More precisely,
 let $N$ be a Lagrangian submanifold of the symplectic manifold $(M,\omega)$ transversal to the leaves
 of an integrable Lagrangian distribution $\mathcal{T}^+$. Then there is an open neighborhood
 $M'$ of $N$ in $M$ such that $(M', \omega|_{M'}, T^+|_{M'})$ is equivalent to a neighborhood $V$
 of the zero section $Z_N \subset T^*N$ equipped with the standard symplectic structure $\omega_{st}$
 of $T^*N$ and the Lagrangian foliation induced by $T^*N \rightarrow N$. In \cite{V1} a condition in order that
 $(M,\omega, T^+)$ is globally equivalent to the cotangent bundle $T^*N, \omega_{st}$ is given.\\
 Let $(M = T^*, \omega_{st})$ be the cotangent bundle of a manifold $N$ with the standard symplectic structure
 $\omega_{st}$ and the standard integrable Lagrangian distributions $T^+$ defined by the projection $T^*N \rightarrow N$.
 A transversal to $T^+$ Lagrangian submanifolds are the graph $\Gamma_\xi = (x, \xi_x)$ of closed 1-forms
 $\xi \in \Omega^1(N)$. The horizontal distribution $T^-= H^\nabla \subset TM$ of a torsion free linear connection $\nabla$
 in $N$ is a Lagrangian distribution complementary to $T^+$ \cite{Mo1}. Hence any torsion free connection defines
 an almost para-K\"ahler structure on $(\omega_{st}, T^+, T^- = H^{\nabla} )$. Note that the distribution $T^+$ is integrable and
 the distribution $T^{-} = H ^\nabla$ is integrable if and only if the connection $\nabla$ is flat. Such structure is called
 a {\bf half integrable almost para-K\"ahler structure}. An application of such structures to construction of Lax pairs in
 Lagrangian dynamics is given in \cite{CI}.

 In \cite{IZ}, the authors define several canonical para-Hermitian connections on an almost para-Hermitian manifold
 and use them to study properties of 4-dimensional para-Hermitian and 6-dimensional nearly
 para-K\"ahler manifolds. In particular, they show that a nearly para-K\"ahler 6-manifold is Einstein and a priori may be
 Ricci flat and that the Nijenhuis tensor $N_K$ is parallel with respect to the canonical connection.
They also prove that the Kodaira-Thurston surface and the Inoue surface admit a hyper-para-complex
structure. The corresponding para-hyperHermitian structures are locally
(but not globally) conformally para-hyperK\"ahler.
\subsection{Para-K\"ahler (bi-Lagrangian or Lagrangian 2-web) structures.}
 A survey of results on geometry of para-K\"ahler manifolds is given in \cite{EST}. Here we review
 mostly results which are not covered in this paper.
Recall that an almost para-Hermitian manifold $(M,g,K)$ is { \bf para-K\"ahler } if
the Levi-Civita connection $\nabla^g$ preserves $K$ or equivalently its holonomy group
$\Hol_x $ at a point $x \in M$ preserves the eigenspaces decomposition $T_xM = T^+_x + T^-_x$. The parallel
eigendistributions
$T^\pm$ of $K$ are $g$-isotropic integrable distributions. Moreover, they are Lagrangian distributions with respect to
the K\"ahler form $\omega = g \circ K$ which is parallel and, hence, closed.
The leaves of these distributions are totally geodesic submanifolds and they are flat with respect to the
 induced connection,  see section 2.4. \\
 A pseudo-Riemannian manifold
$(M,g)$ admits a para-K\"ahler structure $(g,K)$ if the holonomy group $\Hol_x$ at a point $x$ preserves two
complementary $g$-isotropic subspaces $T^\pm_x$. Indeed the associated distributions $T^\pm$ are parallel and
define the para-complex structure $K$ with $K|_{T^\pm } = \pm 1$.\newline
Let $(M,g)$ be a pseudo-Riemannian manifold. In general, if the holonomy group $\Hol_x$ preserves two complementary
 invariant subspaces $V_1,V_2$, then a theorem by L. Berard-Bergery \cite{Kr} shows that there are two
 complementary invariant $g$-isotropic subspaces $T^\pm_x$ which define a para-K\"ahler structure $(g,K)$
 on $M$.\\
 Many local results of K\"ahler geometry remain valid for para-K\"ahler manifolds, see section 4.
 The curvature tensor of a para-K\"ahler manifold belongs to the space
 $ \mathcal{R}(\mathfrak{gl}_n(\mathbb{R}))$
 of $\mathfrak{gl}_n(\mathbb{R})$-valued 2-forms which satisfy the Bianchi identity. This space decomposes into
 a sum of three irreducible $\mathfrak{gl}_n(\mathbb{R})$-invariant subspaces.
 In particular, the curvature tensor $R$ of a para-K\"ahler manifold has the canonical decomposition
 $$ R = c R_1 + R_{\Ric^0} +W $$
 where $R_1$ is the curvature tensor of constant para-holomorphic curvature 1
 (defined as the sectional curvature in the para-holomorphic direction $(X, KX$)), $R_{\Ric^0}$ is the tensor associated with
 the trace free part $\Ric^0$ of the Ricci tensor $\Ric$ of $R$ and $W \in \mathcal{R}(\mathfrak{sl}_n(\mathbb{R}))$
 is the para- analogue  of the Weyl tensor which has zero Ricci part.
 Para-K\"ahler manifolds with constant para-holomorphic curvature (and the curvature tensor proportional to $R_1$)
 are called {\bf para-K\"ahler space forms}. They were defined in \cite{GM} and studied in \cite{SG},\cite{Er}, \cite{EF}.
 Like in the K\"ahler case, there exists  a unique (up to an isometry) simply connected complete para-K\"ahler manifold $M_k$
 of constant
 para-holomorphic curvature $k$. It can be described as the projective space over para-complex numbers. Any complete
 para-K\"ahler manifold of constant para-holomorphic curvature $k$ is a quotient of $M_k$ by a discrete group of isometries acting
 in a properly discontinuous way.\\
 Different classes of submanifolds of a para-K\"ahler manifold are studied in the following papers:\\
 \cite{AB} (para-complex submanifolds), \cite{EF}, \cite{Ro}, \cite{Do} ($CR$-submanifolds),
 \cite{Ri} (anti-holomorphic totally umbilical submanifolds), \cite{IR} (special totally umbilical submanifolds).
\subsubsection{Symmetric and homogeneous para-K\"ahler manifolds.}
 A para-K\"ahler manifold $(M,g,K)$ is called {\bf symmetric} if there exists a { \bf central symmetry} $S_x$ with center at any point
 $x \in M$ that is an involutive isometry which preserves $K$ and has $x$ as isolated fixed point.
 The connected component $G$ of the group generated by central symmetries (called the { \bf group of transvections})
 acts transitively on $M$ and one can identify
 $M$ with the coset space $M = G/H$ where $H$ is the stabilizer of a point $o \in M$. It is known, see \cite{A},
 that a para-K\"ahler (and, more generally, a pseudo-Riemannian) symmetric space with non  degenerate Ricci  tensor  has
 a semisimple group of isometries. All  such symmetric manifolds are known.  In the Ricci flat case, the group generated by transvections is solvable  and the connected holonomy group is nilpotent \cite{A}.\\
  The classification of
 simply connected symmetric para-K\"ahler manifolds reduces to the classification of { \bf 3-graded Lie algebras}
 $$\mathfrak{g} = \mathfrak{g}^{-1} + \mathfrak{g}^0 + \mathfrak{g}^1,\,\, [\mathfrak{g}^{i}, \mathfrak{g}^j] \subset \mathfrak{g}^{i+j}$$
such that $\mathfrak{g}^0$-modules $\mathfrak{g}^{-1}$ and $\mathfrak{g}^1$ are contragredient (dual).\\
Then $\mathfrak{g} = \mathfrak{h}+ \mathfrak{m}= (\mathfrak{g}^0) + (\mathfrak{g}^{-1}+ \mathfrak{g}^1)$
is a symmetric decomposition, the $\ad_{\mathfrak{h}}$-invariant pairing
$\mathfrak{g}^{-1} \times \mathfrak{g}^1 \rightarrow \mathbb{R}$ defines an invariant metric on the associated
homogeneous manifold $M = G/H$ and the $\ad_{\mathfrak{h}}$-invariant subspaces
$\mathfrak{g}^\pm \subset \mathfrak{m} \simeq T_oM$ define eigendistributions of an invariant para-complex structure $K$.
 Symmetric para-K\"ahler spaces of a semisimple Lie group $G$ were studied and classified by S. Kaneyuki \cite{K1}, \cite{K2}, \cite{KKoz}.\\
 $3$-graded Lie algebras are closely related with Jordan pairs and Jordan algebras, see \cite{Be}, and have different applications.
 In \cite{A}, a construction of some class of Ricci flat para-K\"ahler symmetric spaces of nilpotent Lie groups is described
 and the classification of Ricci flat para-K\"ahler symmetric spaces of dimension 4 and 6 is obtained.\\
 Some classes of para-K\"ahler manifolds which are generalizations of symmetric para-K\"ahler manifolds are defined and
 studied in \cite{DDW} and \cite{DDW1}.

 Homogeneous para-K\"ahler manifolds of a semisimple Lie group had been studied by S. Kaneyuki and his collaborators, see
 \cite{HDK}, \cite{HDKN}. In particular, they prove that a homogeneous manifold $M = G/H$ of a semisimple Lie group $G$ admits an
 invariant para-K\"ahler structure if and only if it is a cover of the adjoint orbit of a semisimple element of the Lie algebra
 $\mathfrak{g}$ of $G$. A description of invariant para-K\"ahler structures on homogeneous manifolds of the normal real form of a
 complex semisimple Lie group was given in \cite{Hou}, \cite{HD}. An explicit description of all invariant
 para-K\"ahler structures on homogeneous manifolds  of  a semisimple  Lie  group had been given in \cite{AM}, see section 6.\\
Berezin quantization on para-K\"ahler symmetric spaces of a semisimple Lie group had been studied in
\cite{M}, \cite{MV}, \cite{MV1}. Kostant quantization of a general symplectic manifold with a bi-Lagrangian structure
(that is a para-K\"ahler manifold) is considered in \cite{Hess}.
\subsubsection{Special para-K\"ahler manifolds, $tt^*$-bundles and affine hyperspheres.}
 The notion of {\bf special para-K\"ahler structure} on a manifold $M$ had been defined in the important
 paper \cite{CMMS1}. It is an analogue of the notion of special K\"ahler structure and
 it is defined as a para-K\"ahler structure $(g,K)$ together with a flat torsion free connection
 $\nabla$ which preserves the K\"ahler form $\omega= g \circ K$ and satisfies the condition
 $(\nabla_X K)Y = (\nabla_Y K)X$ for $X,Y \in TM$.

 It was shown in \cite{CMMS1}, \cite{CMMS2} that
 the target space for scalar fields in 4-dimensional Euclidean $N=2$ supersymmetry carries a special
 para-K\"ahler structure similar to the special K\"ahler structure which arises on the target space of
 scalar fields for $N=2$ Lorentzian 4-dimensional supersymmetry. This gives an explanation of a formal
 construction \cite{GGP}, where the authors obtains the Euclidean supergravity action changing in the Lagrangian
 the imaginary unit $i$ by a symbol $e$ with $e^2 =1$. Besides physical applications, \cite{CMMS1}
 contains an exposition of basic results of para-complex and para-K\"ahler geometry in para-holomorphic coordinates.\\
 In \cite{CLS}, the authors construct a canonical immersion of a simply connected special para-K\"ahler manifold
 into the affine space $\mathbb{R}^{n+1}$ as a parabolic affine hypersphere. Any non-degenerate para-holomorphic function
 defines a special para-K\"ahler manifold and the corresponding affine hypersphere is explicitly described.
 It is shown also that any conical special para-K\"ahler manifold is foliated by proper affine hyperspheres of constant
 affine and pseudo-Riemannian mean curvature. Special para-K\"ahler structures are closely related with
 para-$tt^*$-bundles,\cite{S1,S2,S}.
\subsection{Para-hyperK\"ahler (hypersymplectic) structures and para-hyperK\"ahler structures with torsion (PHKT-structures).}

A {\bf para-hyperK\"ahler} ({\bf hypersymplectic}) structure on a $4n$-dimensional manifold $M$
 can be defined as a pseudo-Riemannian metric $g$ together with a $\nabla^g$-parallel
 $g$-skew-symmetric para-hypercomplex structure $(J,K)$. A pseudo-Riemannian metric $g$ admits a
 para-hyperK\"ahler  structure if its holonomy group is a subgroup of the symplectic group
 $Sp_n(\mathbb{R}) \subset SL_{4n}(\mathbb{R})$. A para-hypercomplex structure $(J,K)$ defines
 a para-hyperK\"ahler structure if and only if its (torsion free) Obata connection $\nabla$
 preserves a metric $g$. In other words, the holonomy group of $\nabla$ ( which is in general a subgroup
 of the group $ GL(E) \simeq GL_{2n}(\mathbb{R})$, which acts on the second factor of the canonical decomposition
 $T_xM = \mathbb{R}^2 \otimes E$) must preserve a non-degenerate 2-form $\omega^E$, hence be a subgroup of
 $Sp(E, \omega^E) \simeq Sp_{n}(\mathbb{R})$.
 The metric $g$ of a para-hyperK\"ahler structure has signature $(2n, 2n).$ In this case the $2$-form $\omega^E$
together with the volume $2$-form $\omega^H$ on the trivial bundle $\mathbb{R}^2 \times M \rightarrow M$
defines the metric $g = \omega^H \otimes \omega^E$ in $TM = \mathbb{R}^2 \otimes E$ such that
$(g, J,K)$ is a para-hyperK\"ahler structure.\\
Para-hyperK\"ahler structure $(g,J,K)$ can be also described in symplectic terms as follows.
Note that $\omega_1 := g \circ(JK),\, \omega_2 := g \circ J, \, \omega_3 = g \circ K$ are three
parallel symplectic structures and the associated three fields of endomorphisms
$$
K= \omega_1^{-1} \circ \omega_2, \quad -I= \omega_2^{-1} \circ \omega_3,\quad -J= \omega_3^{-1} \circ \omega_1
$$
define a para-hypercomplex structure $(I =JK,J,K)$. Conversely, three symplectic structures
$\omega_1, \omega_2, \omega_3$ such that the associated three fields of endomorphisms $I,J,K$ form an almost
para-hypercomplex structure $(I,J,K)$ and define a para-hyperK\"ahler structure $(g = \omega_1 \circ I , J,K)$,
see \cite{AnS}. In the hyperK\"ahler case this claim is known as  Hitchin Lemma. Due to this, para-hyperK\"ahler manifolds
are also called {\bf hypersymplectic manifolds}.\newline
 The metric of a hypersymplectic manifold is Ricci-flat and the space of curvature tensors
 can be identified with the space $S^4E$ of homogeneous polynomial of degree four. \\
 In contrast to the hyperK\"ahler case, there are homogeneous and even symmetric
 hypersymplectic manifolds. However, the isometry group of a symmetric hypersymplectic manifold is solvable.
 In \cite{ABCV}, a classification of simply connected symmetric para-hyperK\"ahler
 manifolds with commutative holonomy group is presented. Such manifolds are defined by homogeneous polynomials of degree 4.\newline
 In \cite{FPPW}, the authors construct hypersymplectic structures on some Kodaira manifolds. Many other
 interesting examples of left-invariant hypersymplectic structures of solvable Lie groups
 had been given in \cite{An}, \cite{AnS} , \cite{AD}  and  \cite{ADBO}, where all such structures on 4-dimensional Lie groups
 had been classified.  Under some  additional assumption,  it is given also in
 \cite{BV}.\newline
In \cite{DS}, the authors construct and study hypersymplectic spaces obtained as quotients of the flat hypersymplectic
space $\mathbb{R}^{4n}$ by the action of a compact abelian group. In particular, conditions for smoothness
and non-degeneracy of such quotients are given.

A natural generalization of para-hyperK\"ahler structure is a {\bf para-hyperK\"ahler structure with torsion (PHKT-structure)}
defined and studied in \cite{ITZ}. It is defined as a pseudo-Riemannian metric $g$ together with
a skew-symmetric para-hypercomplex structure $(J,K)$ such that there is a connection $\nabla$ which preserves
$g,J,K$ and has skew-symmetric torsion tensor $T$, i.e. such that $g \circ T$ is a 3-form. The structure is called {\bf strong} if
the 3-form $g \circ T$ is closed. The authors show that locally such a structure is defined by a real function (potential) and
construct examples of strong and non strong PHKT-structures.
\subsection{Para-quaternionic K\"ahler structures and para-quaternionic-K\"ahler
with torsion (PQKT) structures.}
A para-quaternionic structure on a $4n$-dimensional manifold $M^{4n}$ can be defined as
 a pseudo-Riemannian metric $g$ (of neutral signature $(2n,2n)$) with holonomy group in $Sp_1(\mathbb{R}) \cdot Sp_n(\mathbb{R})$.
 This means that the Levi-Civita connection $\nabla^g$ preserves a para-quaternionic structure $Q$.
 This implies that the metric $g$ is Einstein. Moreover, the scalar curvature $\scal$ is zero if and only if the restricted holonomy group is in
 $Sp_n(\mathbb{R})$ and the induced connection in $Q$ is flat. \\
 We will assume that $\scal \neq 0$. Then the holonomy group contains $Sp_1(\mathbb{R})$ and it is irreducible.
 Examples of para-quaternionic K\"ahler manifolds are para-quaternionic K\"ahler symmetric spaces.
 Any para-quaternionic K\"ahler symmetric space is a homogeneous manifold $M = G/H$ of a simple Lie group $G$ of isometries.
 The classification of simply connected para-quaternionic K\"ahler symmetric spaces reduces to the
 description of the $\mathbb{Z}$-gradations
 of the corresponding real simple Lie algebras of the form
 $$ \mathfrak{g} = \mathfrak{g}^{-2} + \mathfrak{g}^{-1}+ \mathfrak{g}^0 + \mathfrak{g}^1 + \mathfrak{g}^2 $$
with $ \dim \mathfrak{g}^{\pm 2}=1$.
 Such gradations can be easily determined, see section 6. The classification of para-K\"ahler symmetric
 spaces based on twistor approach is given in \cite{DJS}. In this paper, the authors define the {\bf twistor spaces}
 $Z(M)$, the {\bf para-3-Sasakian bundle} $S(M)$ and the {\bf positive Swann bundle} $U^+(M)$ of a para-quaternionic
 K\"ahler manifold $(M,g,Q)$ and study induced geometric structures.
 They assume that $M$ is real analytic and consider a (locally defined) holomorphic Grassmann structure of the
 complexification $M^{\mathbb{C}}$ of $M$, that is an isomorphism $TM^{\mathbb{C}} \simeq E \otimes H$ of the holomorphic
 tangent bundle $TM^{\mathbb{C}}$ with the tensor product of two holomorphic vector bundles $E,H$
 of dimension $2n$ and $2$. The {\bf twistor space} is defined as the projectivization $Z(M) := PH$ of the bundle $H$
 which can be identified with the space of (totally geodesic) $\alpha$-surfaces in $M^{\mathbb{C}}$. The authors prove that
 $Z(M)$ carries a natural holomorphic contact form $\theta $ and a real structure and give a characterization
 of the twistor spaces $Z = Z(M)$ as $(2n+1)$-dimensional complex manifolds with a holomorphic contact form and a real structure
 which satisfy some conditions.\\
 This is a specification of a more general construction \cite{BE} of the twistor space of a manifold with a
{\bf holomorphic Grassmann structure} (called also a {\bf complex para-conformal structure}).\\
 The {\bf para-3-Sasakian bundle} associated with $(M,g,Q)$ is the principal $SL_2(\mathbb{R})$-bundle $S(M) \rightarrow M$
 of the standard frames $ s = (I,J,K)$ of the quaternionic bundle $Q \subset \hbox{\rm End}(TM)$. It has a natural metric $g_S$ defined
 by the metric $g$ and the standard bi-invariant metric on $SL_2(\mathbb{R})$. The standard generators of $SL_2(\mathbb{R})$
 define three Killing vector fields $\xi_{\alpha}, \, \alpha =1,2,3$ with the square norm $1,-1,1$ which
 define a {\bf para-3-Sasakian structure}. This means that the curvature operator $R(X, \xi_\alpha) = X \wedge \xi_\alpha$ for $X \in TM$.
 This is equivalent to the condition that the cone metric $g_U = dr^2 + r^2 g$ on the cone $U^+(M) = M \times \mathbb{R}^+$
 is hypersymplectic, i.e. admits a $\nabla^{g_U}$-parallel para-hypercomplex structure $I_U,J_U,K_U$.
 The bundle $U^+(M) \rightarrow M$ is called the {\bf positive Swann bundle}.\smallskip

 Let $G^{\mathbb{C}}$ be the complexification of a real non compact semisimple Lie group $G$  and
 $\mathcal{O}$ an adjoint nilpotent orbit of $G^{\mathbb{C}}$. \\
 In \cite{DJS}, it is proved that the projectivization
 $\mathbb{P}\mathcal{O}$ is the twistor space of a para-quaternionic K\"ahler manifold $M$ if and only if
 $\mathcal{O}$ is the complexification of a nilpotent orbit of $G$ in the real Lie algebra $\mathfrak{g}$. In this case,
 $G$ acts transitively on $M$ preserving the para-quaternionic K\"ahler structure.

 Using the hypersymplectic momentum map of $U^+(M)$, the authors prove that, if a semisimple isometry group $G$ of a
 para-quaternionic K\"ahler manifold $(M,g,Q)$ acts transitively on the twistor space $Z(M)$, then the corresponding orbit
 $\mathcal{O}$ is the orbit of the highest root vector and the manifold $M$ is symmetric. This leads to the classification of
 para-quaternionic K\"ahler symmetric spaces of a semisimple Lie group $G$, hence of all para-quaternionic K\"ahler symmetric
 spaces of non-zero scalar curvature, due to the following result:\\
 Any para-quaternionic K\"ahler or pseudo-quaternionic K\"ahler symmetric space with a
 non-zero scalar curvature $\scal$ is a homogeneous space of a simple Lie group. \\
 This result have  been  proven in \cite{AC1}, where a classification of pseudo-quaternionic K\"ahler symmetric spaces with $\scal \neq 0$ is given in terms of Kac diagrams.
 In \cite{Bl} and \cite{BDM}, the authors define two twistor spaces $Z^+(M)$ and $Z^-(M)$ of a para-quaternionic K\"ahler manifold
 $(M,g,Q)$ of dimension $4n$. They consist of all para-complex structures $K \in Q, \, K^2 = 1$ and, respectively, complex
 structures $J \in Q,\, J^2 =-1$ of the quaternionic bundle $Q$. The authors define a natural almost para-complex, respectively,
 an almost complex structure on $Z^+(M)$ and, respectively, $Z^-(M)$ and prove their integrability in the case $n >1$.
 In \cite{AC}, the geometry of these twistor spaces are studied using the theory of $G$-structures. It is proved that the natural
 horizontal distribution on $Z^{\pm}(M)$ is a para-holomorphic or, respectively, holomorphic contact distribution and that
 the manifolds $Z^{\pm}(M)$ carries two canonical Einstein metrics, one of which is (para)-K\"ahler. A twistor description of
 (automatically minimal) K\"ahler and para-K\"ahler submanifolds in $M$ is given.\\
 A {\bf para-quaternionic K\"ahler manifold with torsion} is a pseudo-Riemannian manifold $(M,g)$ with a skew-symmetric almost
 para-quaternionic structure $Q$ which admits a preserving $Q$ metric connection with skew-symmetric torsion.\\
 Such manifolds are studied in \cite{Z}. In particular, it is proved that this notion is invariant under a conformal transformation of the metric $g$.
\subsection{Para-CR structures and para-quaternionic CR structures (para-3-Sasakian structures).}
A {\bf weak almost para-$CR$ structure of codimension $k$} on a $m +k$-dimensional
manifold $M$ is a pair $(HM,K)$, where $HM \subset TM$ is a rank
$m$ distribution and $ K \in {\rm End}(HM)$ is a field of
endomorphisms such that $K^2=\hbox{\rm id}$ and $K\neq\pm\hbox{\rm
id}$. If $m =2n$  and $\pm 1$-eigendistributions $H^{\pm}$ of $K$ has rank $n$,
the pair $(H,K)$ is called  an almost para-$CR$-structure.
\newline A (weak) almost para-CR structure is said to
be a ({\bf weak}) {\bf para-CR structure}, if it is {\rm  integrable},
that is eigen-distributions $H^{\pm}$ are involutive or, equivalently, the
 the following conditions hold:
\begin{eqnarray*}
&{}&[KX,KY]+[X,Y]\in\Gamma(HM)\,,\label{integrability1}\\[3pt]
&{}& S(X,Y):=[X,Y]+[KX,KY] -
K([X,KY]+[KX,Y])=0\label{integrability2}
\end{eqnarray*}
for all  $X,\,Y\in\Gamma(HM)$.\newline
 In \cite{AMT} and \cite{AMT1}, a description of maximally homogeneous weak para-CR-structures of semisimple  type in
  terms of fundamental gradations of   real semisimple Lie algebras  is  given.\\
  Codimension one para-CR structures  are naturally  arise on generic  hypersurfaces of a para-complex manifold
  $N$, in particular, on hypersurfaces in  a  Cartesian  square $N =X \times X $ of a manifold $X$. \\
  Consider, for example, a second order ODE $\ddot y = F(x,y,\dot y)$. Its general solution
  $y = f(x,y, X,Y)$ depends on two free parameters $X,Y$ (constants of integration) and determines a hypersurface
  $M$ in the space $\R^2 \times \R^2 = \{(x,y,X,Y) \}$ with the  natural para-complex structure $K$ invariant
  under the point  transformations. The  induced para-$CR$ structure on the space $M$ of solutions  plays important role in
  a geometric theory of ODE, developed in \cite{NS}, where a para-analogue of the Fefferman metric on a bundle over
  $M$ is constructed and a notion of  duality of ODEs  is introduced and studied.

In \cite{AKam}, a notion of a {\bf para-quaternionic} (respectively,{ \bf quaternionic}) {\bf CR structure} on
$4n+3$-dimensional manifold $M$
is defined as a triple
$(\omega_1, \omega_2, \omega_3)$ of 1-forms such that associated 2-forms
$$
\rho^1 = d\omega_1 - 2 \epsilon \,\omega_2 \wedge \omega_3\,,\quad \rho^2 = d\omega_1 + 2\omega_3 \wedge \omega_1\,,
\quad\rho^3 = d\omega_3 + 2 \omega_1 \wedge \omega_2\,,
$$
(where $\epsilon = 1$ in the para-quaternionic case and $\epsilon =-1$ in the quaternionic case)
are non-degenerate on the subbundle $H =\cap_{\alpha =1}^3 \hbox{\rm Ker}\, \omega_\alpha $
of codimension three and the endomorphisms
$$
 J_1 = - \epsilon(\rho^3_H )^{-1} \circ \rho^2_H\,,\quad J_2 = (\rho^1_H )^{-1} \circ \rho^3_H\,,\quad
J_3 = (\rho^2_H )^{-1} \circ \rho^1_H\,,
$$
define an almost para-hypercomplex (respectively, almost hypercomplex) structure on $H$.
It is shown that such a structure defines a para-$3$-Sasakian (respectively, pseudo-$3$-Sasakian)
structure on $M$, hypersymplectic (respectively, hyperK\"ahler) structure on the cone $C(M) = M \times \mathbb{R}^+$
and, under some regularity assumptions, a para-quaternionic K\"ahler (respectively, quaternionic K\"ahler)
structure on the orbit space of a naturally defined $3$-dimensional group of transformations on $M$
(locally isomorphic to $Sp_1(\mathbb{R})$ or $Sp_1$). Some homogeneous examples of such structures are indicated and
a simple reduction method for constructing a non-homogeneous example is described.
\section{Para-complex vector spaces.}
\subsection{The algebra of para-complex numbers $C$.}
We recall that the {\bf algebra of para-complex numbers}
is defined as the vector space $C = \R^2$ with the multiplication
$$
(x,y)\cdot (x',y')= (xx'+yy',xy'+yx')\,.
$$
 We set $e=(0,1)$. Then $e^2=1$ and we can write
$$
C = \R + e \R = \{z = x + ey\,,\,\,\vert\,\,x,y\in\R\}\,.
$$
The conjugation of an element $z = x + ey$ is defined by $\bar z := x -ey$
and $ \Re\mathfrak{e}\,z :=x $ and
$\Im\mathfrak{m}\,z=y$ are called the {\bf real part} and the {\bf imaginary part} of the para-complex number $z$, respectively.\newline
We denote by $C^* = \{z = x +ey\,, \,\, \vert\,\,x^2-y^2\neq 0 \}$ the group of invertible elements of $C$.\smallskip

\subsection{Para-complex structures on a real vector space.}
Let $V$ be a $2n$-dimensional real vector space. A {\bf para-complex structure}
on $V$ is an endomorphism $K:V\to V$ such that
\begin{itemize}
\item[i)] $K^2=\Id_V$;
\item[ii)] the eigenspaces $V^+$, $V^-$ of $K$ with eigenvalues $1$, $-1$, respectively, have the same dimension.
\end{itemize}
The pair $(V,K)$ will be called a {\bf para-complex vector space}.\newline
Let $K$ be a para-complex structure on $V$. We define the {\bf para-complexification} of $V$ as $V^C=V\otimes_\R C$ and we extend $K$ to
a $C$-linear endomorphism $K$ of $V^C$. Then by setting
\begin{eqnarray*}
V^{1,0}&=&\{v\in V^C\,\,\vert\,\, Kv=ev\}=\{v+eKv\,\,\vert\,\, v\in V\}\,,\\
V^{0,1}&=&\{v\in V^C\,\,\vert\,\, Kv=-ev\}=\{v-eKv\,\,\vert\,\, v\in V\}\,,
\end{eqnarray*}
we obtain $V^C=V^{1,0}\oplus V^{0,1}$.
\begin{rem}\label{module} {\rm The extension $K$ of the endomorphism $K$ with eigenvalues $\pm 1$ to $V^C$ has
''eigenvalues'' $\pm e$. There is no
contradiction since $V^C$ is a module over $C$, but not a vector space ($C$ is an algebra, but not a field).}
\end{rem}
The {\bf conjugation} of $x+ey\in V^C$ is given by
$\overline{z}=x-ey$, where $x,y\in V$. A para-complex structure $K$ on $V$ induces a para-complex structure $K^*$ on the dual
space $V^*$ of $V$ by
$$
K^*(\alpha)(v)=\alpha(Kv)\,,
$$
for any $\alpha\in V^*$, $v\in V$. Therefore, if $V^{* C}=V^*\otimes_\R C$, then $V^{*C}=V_{1,0}\oplus V_{0,1}$, where
\begin{eqnarray*}
V_{1,0}&=&\{\alpha\in V^{*C}\,\,\vert\,\, K^*\alpha=e\alpha\}=\{\alpha+eK^*\alpha\,\,\vert\,\, \alpha\in V^*\}\,,\\
V_{0,1}&=&\{\alpha\in V^{*C}\,\,\vert\,\, K^*\alpha=-e\alpha\}=\{\alpha-eK^*\alpha\,\,\vert\,\, \alpha\in V^*\}\,.
\end{eqnarray*}
Denote by $\wedge^{p,q}V^{*C}$ the subspace of $\wedge V^{*C}$ spanned by $\alpha\wedge\beta$, with $\alpha\in\wedge^pV_{1,0}$ and
$\beta\in\wedge^qV_{0,1}$. Then
$$
\wedge^rV^{*C} =\bigoplus_{p+q=r}\wedge^{p,q}V^{*C}\,.
$$
If $\{e^1,\ldots ,e^n\}$ is a basis of $V_{1,0}$, then $\{\overline{e}^1,\ldots ,\overline{e}^n\}$ is a basis of $V_{0,1}$ and
$$
\{e^{i_1}\wedge\cdots\wedge e^{i_p}\wedge\overline{e}^{j_1}\wedge\cdots\wedge\overline{e}^{j_q},\,\,1\leq i_1<\cdots <i_p\leq n\,,\,\,
1\leq j_1<\cdots <j_q\leq n\}
$$
is a basis of $\wedge^{p,q}V^{*C}$.
\subsection{Para-Hermitian forms.}
\begin{definition} A {\bf para-Hermitian form} on $V^C$ is a map $h:V^C\times V^C\to C$ such that:
\begin{itemize}
\item[i)] $h$ is $C$-linear in the first entry and $C$-antilinear in the second entry;
\item[ii)] $h(W,Z)=\overline{h(Z,W)}$.
\end{itemize}
\end{definition}
\begin{definition}
A {\bf para-Hermitian symmetric form} on $V^C$ is a symmetric $C$-bilinear form $h:V^C\times V^C\to C$ such that
\begin{eqnarray*}
h(V^{1,0},V^{1,0})&=&h(V^{0,1},V^{0,1}) =0\,,\\
h(\overline{Z},\overline{W}) &=&\overline{h(Z,W)}
\end{eqnarray*}
for any $Z,W\in V^C$.\newline
It is called {\bf non-degenerate} if it has trivial kernel:
$$
\ker (h) = \left\{ Z\in V^C\,\,: \,\, h(Z,V^C)=0 \right\}=0
$$
\end{definition}
If $h(Z,W)$ is a para-Hermitian symmetric form, then $\hat{h}(Z,W)=h(Z,\overline{W})$
is a para-Hermitian form.
\begin{lemma} There exists a natural $1-1$ correspondence between pseudo-Euclidean metric $g$ on a
vector space $V$ such that
$$
g(KX,KY)=-g(X,Y)\,,\quad X,Y\in V
$$
and non-degenerate para-Hermitian symmetric forms $h=g^C$ in $V^C$, where $g^C$ is the natural extension of $g$ to
$C$-bilinear symmetric form. Moreover, the natural $C$-extension $\omega^C$ of the two form $\omega =g\circ K$ coincides with the $(1,1)$-form
$g^C\circ K$.
\end{lemma}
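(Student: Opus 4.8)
The plan is to realize the correspondence explicitly as $g\mapsto g^C$, with inverse map the restriction $h\mapsto h|_{V\times V}$, to check that each of the two maps lands in the asserted class, and that they are mutually inverse; the $\omega$-statement is then a short addendum. The one observation I would record first and use everywhere is that the condition $g(KX,KY)=-g(X,Y)$ is \emph{equivalent} to $g$-skew-symmetry of $K$, i.e. $g(KX,Y)=-g(X,KY)$ (replace $Y$ by $KY$ and use $K^2=\Id_V$); this is exactly what will make the unwanted terms cancel below.

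For the forward direction, starting from such a $g$ I would take $g^C$, which is automatically $C$-bilinear and symmetric, and check the three defining properties of a non-degenerate para-Hermitian symmetric form. The identity $g^C(\bar Z,\bar W)=\overline{g^C(Z,W)}$ is immediate since $g^C$ extends a real form and conjugation fixes $V$ pointwise. For $g^C(V^{1,0},V^{1,0})=0$ I would expand $g^C(v+eKv,\,w+eKw)$, use $e^2=1$, and invoke the anti-invariance together with the skew-symmetry of $K$ to kill both the $1$- and the $e$-component; the case $V^{0,1}$ is identical. Non-degeneracy is clear because the Gram matrix of $g^C$ in a real basis of $V$ (used as a $C$-basis of $V^C$) is the invertible Gram matrix of $g$.

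For the converse direction, given a non-degenerate para-Hermitian symmetric $h$ I would set $g:=h|_{V\times V}$; the property $h(\bar Z,\bar W)=\overline{h(Z,W)}$ forces $h(X,Y)\in\R$ for $X,Y\in V$, so $g$ is a genuine real symmetric bilinear form, and it is non-degenerate because $h(X,\cdot)$ is $C$-linear and so vanishes on all of $V^C=V\otimes_\R C$ once it vanishes on $V$. Expanding $0=h(X+eKX,\,Y+eKY)$ and separating the $1$- and $e$-components recovers $g(KX,KY)=-g(X,Y)$. That the two maps are mutually inverse is then a one-liner: $g^C$ and $h$ are both $C$-bilinear and agree on $V\times V$, which $C$-spans $V^C$, hence are equal, while restricting $g^C$ to $V$ gives back $g$. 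For the final sentence I would note that $\omega^C$ and $g^C\circ K$ are both $C$-bilinear on $V^C$ and agree on $V\times V$ (where $\omega=g\circ K$ by definition), hence coincide; skew-symmetry follows from symmetry of $g$ and skew-symmetry of $K$. To see this $2$-form has type $(1,1)$ I would use the criterion, read off from the basis description of $\wedge^{p,q}V^{*C}$ just above, that a $2$-form lies in $\wedge^{1,1}V^{*C}$ exactly when it vanishes on $V^{1,0}\times V^{1,0}$ and on $V^{0,1}\times V^{0,1}$: for $Z,W\in V^{1,0}$ one has $KZ=eZ$, so $(g^C\circ K)(Z,W)=e\,g^C(Z,W)=0$ since $g^C(V^{1,0},V^{1,0})=0$, and symmetrically on $V^{0,1}$.

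The whole argument is elementary manipulation with $e^2=1$, so I do not anticipate any real obstacle. The only two points that need a moment's care are the remark that $K$-anti-invariance of $g$ already encodes $g$-skew-symmetry of $K$ (without it the $e$-components in the forward direction would not vanish), and the correct use of the bidegree criterion for $2$-forms in the final step.
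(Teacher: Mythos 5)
Your argument is correct and complete. The paper states this lemma without proof, so there is no argument of the authors to compare against; your verification (extension $g\mapsto g^C$ and restriction $h\mapsto h|_{V\times V}$ as mutually inverse maps, hinging on the observation that $K$-anti-invariance of $g$ is equivalent to $g$-skew-symmetry of $K$, which is what makes both the real and the $e$-components of $g^C(v+eKv,w+eKw)$ vanish) is exactly the standard one being left to the reader, and your treatment of the $(1,1)$-type of $g^C\circ K$ via vanishing on $V^{1,0}\times V^{1,0}$ and $V^{0,1}\times V^{0,1}$ matches the paper's conventions. The only point deserving an extra half-sentence is non-degeneracy over the ring $C$, which has zero divisors: your Gram-matrix argument still closes because the determinant is a nonzero real number, hence a unit in $C$ (equivalently, one can argue directly that $h(X+eY,V)=0$ forces $g(X,\cdot)=g(Y,\cdot)=0$).
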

\section{Para-complex manifolds.}
We recall some basic definitions of para-complex geometry.
\begin{definition} An {\bf almost para-complex structure} on a $2n$-dimensional
manifold $M$ is a field $K\in \Gamma (\hbox{\rm End}(TM))$ of endomorphisms such that
\begin{itemize}
\item[i)]$K^2=\Id_{TM}$,
\item[ii)] the two eigendistributions $T^\pm M:=\ker(\Id\mp K)$ have the same rank\,.
\end{itemize}
A {\bf almost para-complex structure} $K$ is said to be {\bf integrable} if the distributions $T^\pm M$ are involutive.
In such a case $K$ is called a {\bf para-complex structure}. A manifold $M$ endowed with an (almost) para-complex structure $K$ is
called an {\bf (almost) para-complex manifold}.\newline
A map $f:(M,K)\to(M',K')$ between two (almost) para-complex manifolds is said to be {\bf para-holomorphic} if
\begin{equation}\label{paraholomorphicmap}
df\circ K=K'\circ df\,.
\end{equation}
\end{definition}
The {\bf Nijenhuis tensor} $N_K$ of an almost para-complex structure $K$ is defined by
$$
N_K(X,Y)=[X,Y]+[KX,KY]-K[KX,Y]-K[X,KY]\,
$$
for any vector fields $X$, $Y$ on $M$. As in the complex case, a para-complex structure $K$ is integrable if and only
if $N_K=0$ (see e.g. \cite{CMMS1}).\newline
A basic example of a para-complex manifold is given by
$$
C^n:=\{(z^1,\ldots ,z^n)\,\,\vert\,\,z^\alpha\in C\,,i =1,\ldots ,n\}\,,
$$
where the para-complex structure is provided by the multiplication by $e$.
The Frobenius theorem implies (see e.g. \cite{CMMS2}) the existence of local coordinates
$(z^\alpha_+,z^\alpha_-)\,\,,\alpha =1,\ldots , n$ on a para-complex manifold $(M,K)$, such that
$$
T^+M=\hbox{\rm span}\left\{\frac{\partial}{\partial z^\alpha_+}\,,\alpha =1,\ldots , n\right\}\,,\quad
T^-M=\hbox{\rm span}\left\{\frac{\partial}{\partial z^\alpha_-}\,,\alpha =1,\ldots , n\right\}\,.
$$
Such (real) coordinates are called {\bf adapted coordinates} for the para-complex structure $K$.\newline
The cotangent bundle $T^*M$ splits as
$T^*M = T^{*}_+M\oplus T^{*}_-M$, where $T^{*}_{\pm}M$ are the $\pm 1$-eigendistributions of $K^*$. Therefore,
$$
\wedge^rT^*M = \bigoplus_{p+q=1}^r\wedge^{p,q}_{+\,-}T^*M\,,
$$
where $\wedge^{p,q}_{+\,-}T^*M =\wedge^p(T^*_+ M)\otimes\wedge^q(T^*_- M)$.
The sections of $\wedge^{p,q}_{+\,-}T^*M$ are called {\bf $(p+,q-)$-forms} on the para-complex
manifold $(M,K)$. We will denote the space of sections of the bundle $\wedge^{p,q}_{+\,-}T^*M$ by the same symbol.\newline
We set
\begin{eqnarray*}
\partial_+ &=&\hbox{\rm pr}_{\wedge^{p+1,q}_{+\,-}(M)}\circ d : \wedge^{p,q}_{+\,-}(M)\to \wedge^{p+1,q}_{+\,-}(M)\,, \\
\partial_- &=&\hbox{\rm pr}_{\wedge^{p,q+1}_{+\,-}(M)}\circ d : \wedge^{p,q}_{+\,-}(M)\to \wedge^{p,q+1}_{+\,-}(M)\,.
\end{eqnarray*}
Then the exterior differential $d$ can be decomposed as $d =\partial_+ + \partial_-$ and, since $d^2=0$, we have
$$
\partial^2_+=\partial^2_- =0\,,\quad \partial_+\partial_-+\partial_-\partial_+=0\,.
$$
\subsection{Para-holomorphic coordinates.}
Let $(M,K)$ be a $2n$-dimensional para-complex manifold. Like in the complex case we can define on $M$ an atlas of para-holomorphic
local charts $(U_a,\varphi_a)$ where $\varphi_a : M\supset U_a\to C^n$ and such that the transition functions
$\varphi_a\circ\varphi^{-1}_b$ are para-holomorphic functions in the sense of \eqref{paraholomorphicmap}. We
associate with any adapted coordinate system $(z^\alpha_+,z^\alpha_-)$ a para-holomorphic coordinate
system $z^\alpha$ by
\begin{equation}\label{coordinateparacomplesse}
z^\alpha=\frac{z^\alpha_++z^\alpha_-}{2} + e\,\frac{z^\alpha_+-z^\alpha_-}{2}\,,\quad \alpha=1,\ldots ,n\,.
\end{equation}
One can easily check (see \cite{CMMS1}) that $z^\alpha$ are para-holomorphic functions in the sense
of \eqref{paraholomorphicmap} and that the transition functions between two para-holomorphic coordinate systems are
para-holomorphic.
We stress that the real part $x^\alpha$ and the imaginary part $y^\alpha$ of the
functions $z^\alpha$ given by
$$
x^\alpha = \frac{1}{2}(z^\alpha +\overline{z}^\alpha)=\frac{1}{2}(z^\alpha_++z^\alpha_-)\,,\,\,\,
y^\alpha = \frac{1}{2e}(z^\alpha -\overline{z}^\alpha)=\frac{1}{2}(z^\alpha_+-z^\alpha_-)
$$
are not necessarily real analytic.
\subsection{Para-complex differential forms.}\label{differentialforms}
Let $(M,K$) be a para-complex manifold. We define {\bf para-complex tangent bundle}
as the $\R$-tensor product $T^CM=TM\otimes C$ and we extend the endomorphism $K$
to a $C$-linear endomorphism of $T^CM$. For any $p\in M$,
we have the following decomposition of $T_p^CM$:
\begin{equation}
\label{1001}
T_p^CM=T^{1,0}_pM\oplus T^{0,1}_pM\,,
\end{equation}
where
\begin{eqnarray*}
T^{1,0}_pM&=&\{Z\in T_p^CM\,\,\vert\,\, KZ=eZ\}=\{X+eKX\,\,\vert\,\, X\in T_pM\}\,,\\
T^{0,1}_pM&=&\{Z\in T_p^CM\,\,\vert\,\, KZ=-eZ\}=\{X-eKX\,\,\vert\,\, X\in T_pM\}\,
\end{eqnarray*}
are the ''eigenspaces'' of $K$ with ''eigenvalues'' $\pm e$ (see remark \ref{module}).\newline
We define the {\bf conjugation} of an element $Z=X+eY\in T_pM^C$ by $\overline{Z}=X-eY$. Then
$T_p^{0,1}M=\overline{T}_p^{\,1,0}M$. The para-complex vectors
$$
\frac{\partial}{\partial z^\alpha}=\frac{1}{2}\left(\frac{\partial}{\partial x^\alpha}+e\frac{\partial}{\partial y^\alpha}\right)\,,\quad
\frac{\partial}{\partial \overline{z}^\alpha}=\frac{1}{2}\left(\frac{\partial}{\partial x^\alpha}-e\frac{\partial}{\partial y^\alpha}\right)\,
$$
form a basis of the spaces $T^{1,0}_pM$ and $T^{0,1}_pM$, respectively. A vector
$Z\in T^{1,0}_pM$, respectively, $ \bar Z \in T^{0,1}_pM$, has uniquely defined
coordinates with respect to $\frac{\partial}{\partial z^\alpha}$, respectively,
 $\frac{\partial}{\partial \overline{z}^\alpha}$.\newline
The para-complex structure $K$ acts on the dual space $(T^C)^*M$ by
$$
K^*\alpha(X)=\alpha(KX)\,.
$$
We have a decomposition
$$
(T^C)^*M =\wedge^{1,0}(M) \oplus \wedge^{0,1}(M)\,,
$$
where
\begin{eqnarray*}
\wedge^{1,0}(M)&:=&\{\alpha +eK^*\alpha\,\,\vert\,\,\alpha\in T^*M\}\,,\\
\wedge^{0,1}(M)&:=&\{\alpha -eK^*\alpha\,\,\vert\,\,\alpha\in T^*M\}\,,
\end{eqnarray*}
are eigenspaces for $K$ with eigenvalues $\pm e$.
We denote by
$$
dz^\alpha=dx^\alpha + edy^\alpha \quad\hbox{\rm and}\quad d\overline{z}^\alpha=dx^\alpha - edy^\alpha
$$
the basis of $\wedge^{1,0}(M)$ and $\wedge^{0,1}(M)$ dual to the bases
$\frac{\partial}{\partial z^\alpha}$ and $\frac{\partial}{\partial \overline{z}^\alpha}$, respectively. The last decomposition
induces a splitting of the bundle $\wedge^r(T^{C})^*M$ of para-complex $r$-forms on $(M,K)$ given by
$$
\wedge^r(T^{C})^*M =\bigoplus_{p+q=r}\wedge^{p,q}(M)\,.
$$
The sections of $\wedge^{p,q}(M)$ are called {\bf $(p,q)$-forms} on the para-complex manifold $(M,K)$. One can check that
\begin{equation}\label{hermitianform}
\wedge^{1,1}_{+\,-}(M)=\{\omega\in\wedge^{1,1}(M)\,\,\vert\,\,\omega=\overline{\omega}\}\,.
\end{equation}
The exterior derivative $d:\wedge^rT^*M^C\to \wedge^{r+1}T^*M^C$ splits as $d=\partial + \overline{\partial}$, where
\begin{eqnarray*}
\partial &=& \hbox{\rm pr}_{\wedge^{p+1,q}(M)}\circ d :\wedge^{p,q}(M)\to\wedge^{p+1,q}(M)\,,\\
\overline{\partial} &=& \hbox{\rm pr}_{\wedge^{p,q+1}(M)}\circ d :\wedge^{p,q}(M)\to\wedge^{p,q+1}(M)\,,
\end{eqnarray*}
and moreover, since $d^2=0$, we easily get
$$
\partial^2=0\,,\quad \overline{\partial}^2=0\,,\,\quad\partial\overline{\partial}+\overline{\partial}\partial=0\,.
$$
The operators $\partial, \overline{\partial}$ are related to $\partial_+, \partial_-$ by
$$
\partial =\frac12((\partial_++\partial_-) + e (\partial_+-\partial_-))\,,\quad
\overline{\partial} =\frac12((\partial_++\partial_-) - e (\partial_+-\partial_-))\,.
$$
In particular,
$$
\partial\overline{\partial} = e\, \partial_+\partial_-\,.
$$
We need the following result which is a consequence of a version of the Dolbeault Lemma for a para-complex manifold
(see \cite{CMMS1}).
\begin{prop}\label{Dolbeault} Let $(M,K)$ be a para-complex manifold and $\omega$ a closed $2$-form belonging to
$\wedge^{1,1}_{+\,-}(M)$. Then locally there exists a real-valued function $F$ (called {\bf potential}) such that
$$
\omega = \partial_+\partial_- F= e\,\partial\overline{\partial}F\,.
$$
The potential $F$ is defined up to addition of a function $f$ satisfying the condition $\partial_+\partial_- f=0$.
\end{prop}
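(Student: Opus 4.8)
The plan is to reduce the statement to two successive applications of the Poincar\'e lemma --- one in the ordinary sense, one along the leaves of the distribution $T^+M$ --- together with a bookkeeping argument on bidegrees.

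First I would apply the classical Poincar\'e lemma to the closed real $2$-form $\omega$: on a contractible neighbourhood $U$ there is a real $1$-form $\beta$ with $\omega = d\beta$. Using the splitting $T^*M = T^*_+M \oplus T^*_-M$, write $\beta = \beta_+ + \beta_-$ with $\beta_+$ a section of $\wedge^{1,0}_{+\,-}(M)$ and $\beta_-$ a section of $\wedge^{0,1}_{+\,-}(M)$, both real. Since $d = \partial_+ + \partial_-$ one gets
$$
\omega = d\beta = \partial_+\beta_+ + \bigl(\partial_-\beta_+ + \partial_+\beta_-\bigr) + \partial_-\beta_- ,
$$
where the three summands lie in $\wedge^{2,0}_{+\,-}(M)$, $\wedge^{1,1}_{+\,-}(M)$ and $\wedge^{0,2}_{+\,-}(M)$ respectively. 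Because this decomposition of $\wedge^2 T^*M$ is direct and $\omega\in\wedge^{1,1}_{+\,-}(M)$ by hypothesis, I conclude $\partial_+\beta_+ = 0$, $\partial_-\beta_- = 0$ and $\omega = \partial_-\beta_+ + \partial_+\beta_-$.

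Next I would invoke the para-complex Dolbeault lemma of \cite{CMMS1}: in adapted coordinates $\partial_+$ is the de Rham differential along the leaves of the integrable distribution $T^+M$, with the transverse coordinates $z^\alpha_-$ playing the role of parameters, so a $\partial_+$-closed form of positive degree is locally $\partial_+$-exact, the primitive depending smoothly on the transverse variables (and being real when the form is real). Applying this to $\beta_+$, and the analogous statement for $\partial_-$ to $\beta_-$, I obtain real-valued functions $u,v$ on a possibly smaller neighbourhood with $\beta_+ = \partial_+ u$ and $\beta_- = \partial_- v$. Substituting and using $\partial_-\partial_+ = -\partial_+\partial_-$ gives
$$
\omega = \partial_-\partial_+ u + \partial_+\partial_- v = \partial_+\partial_-(v-u),
$$
so $F := v-u$ is a real-valued potential; the identity $\omega = e\,\partial\overline{\partial}F$ then follows from $\partial\overline{\partial} = e\,\partial_+\partial_-$ and $e^2 = 1$. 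The ambiguity assertion is immediate: if $\partial_+\partial_- F = \partial_+\partial_- F'$ then $f:=F-F'$ satisfies $\partial_+\partial_- f = 0$, and conversely adding any such $f$ to $F$ does not change $\partial_+\partial_- F$.

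I expect no serious obstacle here: the only nontrivial ingredient is the leafwise Poincar\'e lemma with smooth dependence on the transverse parameters, which is precisely the ``version of the Dolbeault Lemma'' quoted from \cite{CMMS1}, and which --- unlike in the holomorphic category --- requires no ellipticity since $\partial_+$ is an honest exterior differential along the leaves. The one point deserving care is keeping the bidegree bookkeeping in the first step airtight and checking that all the intermediate primitives can be chosen real; the latter holds because $\omega$ and $\beta$ are real and both the projections onto $T^*_\pm M$ and the homotopy operators of the Poincar\'e lemma are real-linear.
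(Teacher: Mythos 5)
Your argument is correct. Note, however, that the paper does not actually prove this proposition: it merely states it as ``a consequence of a version of the Dolbeault Lemma for a para-complex manifold'' with a citation to \cite{CMMS1}, so there is no internal proof to compare against. What you have written is the standard local $\partial\overline{\partial}$-lemma argument transplanted to the para-complex setting, and it supplies exactly the details the paper delegates to the reference: (a) the ordinary Poincar\'e lemma gives $\omega=d\beta$ with $\beta$ real; (b) since both eigendistributions are involutive, $d$ has no components shifting bidegree by $(2,-1)$ or $(-1,2)$, so comparing the $(2+,0-)$, $(1+,1-)$ and $(0+,2-)$ pieces of $d\beta_++d\beta_-$ against $\omega\in\wedge^{1,1}_{+\,-}(M)$ forces $\partial_+\beta_+=0$, $\partial_-\beta_-=0$ and $\omega=\partial_-\beta_++\partial_+\beta_-$; (c) the leafwise (parametrized) Poincar\'e lemma --- which, as you observe, is elementary here because $\partial_+$ in adapted coordinates is just the exterior differential in the $z^\alpha_+$ variables with $z^\alpha_-$ as parameters, with no ellipticity needed --- produces real primitives $u,v$, and anticommutativity of $\partial_+,\partial_-$ yields $F=v-u$. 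The conversion $\partial_+\partial_-F=e\,\partial\overline{\partial}F$ is immediate from the paper's identity $\partial\overline{\partial}=e\,\partial_+\partial_-$ and $e^2=1$, and the ambiguity statement is, as you say, tautological. The one ingredient you are taking on faith, the smooth dependence of the leafwise primitive on transverse parameters, is precisely the content of the cited lemma of \cite{CMMS1}, so your proof is a legitimate and complete expansion of the paper's one-line justification.
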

\section{Para-K\"ahler manifolds.}
\subsection{Para-K\"ahler structures and para-K\"ahler potential.}
We recall three equivalent definitions of a para-K\"ahler manifold.
\begin{definition} A {\bf para-K\"ahler manifold} is given equivalently by:
\begin{enumerate}
\item[i)] a pseudo-Riemannian manifold $(M,g)$ together with a skew-symmetric
para-complex structure $K$ which is parallel with respect to the Levi-Civita connection;
\item[ii)] a symplectic manifold $(M,\omega)$ together with two complementary
involutive Lagrangian distributions $T^\pm M$.
\item[iii)] a para-complex manifold $(M,K)$ together with a symplectic form $\omega$
which belongs to $\wedge^{1,1}_{+\,-}(M)$.
\end{enumerate}
\end{definition}
The relations between the three definitions are the following. A pair $(g,K)$ as in $i)$ defines a symplectic form
$\omega = g \circ K$ and complementary involutive Lagrangian distributions $T^\pm M$
which are eigenspace distributions of $K$. One can check that the para-complex extension of the symplectic
form belongs to $\wedge^{1,1}_{+\,-}(M)$. Assume now that $(K,\omega) $ is as in iii). Then
 $g=\omega\circ K$ is a pseudo-Riemannian metric on $M$ and $(g,K)$ satisfies $i)$
 due to the following formula (see \cite[Theorem 1]{CMMS1})
$$
2g((\nabla_XK)Y,Z)=d\omega(X,Y,Z)+d\omega(X,KY,KZ)-g(N_K(Y,Z),KX)\,.
$$

Let $(M,K,\omega,g)$ be a para-K\"ahler manifold. We denote by the same letters
$g$ and $\omega$ the extensions of $g$ and $\omega$ to $C$-bilinear forms.
\begin{lemma}\label{isotropic} The following formulas hold:
\begin{itemize}
\item[i)] $g(\overline{Z},\overline{W})=\overline{g(Z,W)}\,,\qquad\quad$\ \
$\omega(\overline{Z},\overline{W})=\overline{\omega(Z,W)}\,,\,\,\,$\ $\forall\, Z,W\in T^CM$;\smallskip
\item[ii)] $g (KZ,KW)=-g(Z,W)$, $\;\;\;\omega (KZ,KW)=-\omega(Z,W)$;\smallskip
\item[iii)] the restrictions of $g$ and $\omega$ to $T^{1,0}M$ and $T^{0,1}M$ vanish.
\end{itemize}
\end{lemma}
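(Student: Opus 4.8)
The plan is to deduce all three statements from two elementary facts: that $g$ and $\omega$ are \emph{real} bilinear forms on $TM$ extended $C$-bilinearly to $T^CM$, and that $K$ is $g$-skew-symmetric with $K^2=\Id$.

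First I would prove i). Writing $Z=X+eY$ and $W=U+eV$ with $X,Y,U,V\in T_pM$, $C$-bilinearity together with $e^2=1$ and the fact that $g$ is real-valued on real vectors gives $g(Z,W)=\bigl(g(X,U)+g(Y,V)\bigr)+e\bigl(g(X,V)+g(Y,U)\bigr)$. Conjugating this and comparing with the analogous expansion of $g(\overline Z,\overline W)=g(X-eY,U-eV)$ yields the first identity; the computation for $\omega$ is identical. Equivalently, conjugation is the $C$-algebra involution fixing $T_pM$ pointwise, hence it commutes with any form obtained by $C$-bilinear extension of a form that is real on $T_pM$.

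Next, ii) is immediate on $TM$: skew-symmetry $g(KX,Y)=-g(X,KY)$ together with $K^2=\Id$ gives $g(KX,KY)=-g(X,K^2Y)=-g(X,Y)$, and since $\omega(X,Y)=g(KX,Y)$ we get $\omega(KX,KY)=g(K^2X,KY)=g(X,KY)=-g(KX,Y)=-\omega(X,Y)$. Extending $C$-bilinearly in both arguments preserves these identities, so ii) holds on all of $T^CM$. (This can also be read off from the correspondence in the para-Hermitian form lemma proved above.)

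Finally, iii) follows from ii) by using that $K$ acts on $T^{1,0}M$ as multiplication by $e$ and on $T^{0,1}M$ as multiplication by $-e$. For $Z,W\in T^{1,0}M$ one has $KZ=eZ$, $KW=eW$, so by ii) and $C$-bilinearity $-g(Z,W)=g(KZ,KW)=e^2\,g(Z,W)=g(Z,W)$, whence $2\,g(Z,W)=0$ in $C$; since $2\in C^*$ this forces $g(Z,W)=0$, and the case $T^{0,1}M$ is the same with $e$ replaced by $-e$, while $\omega$ is handled identically. The only point needing a word of care is that $C$ has zero divisors, so one must note $2$ is invertible in $C$ before cancelling; no other difficulty arises. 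Statement iii) is really the one carrying geometric content, as it re-expresses that $T^{\pm}M$ are $g$-isotropic and $\omega$-Lagrangian.
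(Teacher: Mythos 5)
Your proposal is correct. Parts i) and ii) are handled exactly as in the paper, which simply invokes the reality of $g$ and $\omega$ and the definition of a para-Hermitian pair; your fuller expansion of $g(X+eY,U+eV)$ just makes the one-line remark explicit. For iii) your route differs slightly from the paper's: the paper writes an element of $T^{1,0}M$ as $X+eKX$ with $X\in TM$ and expands $g(X+eKX,Y+eKY)$ directly, using $g(KX,KY)=-g(X,Y)$ to kill the real part and the skew-symmetry identity $g(X,KY)+g(KX,Y)=0$ to kill the $e$-part, so no division in $C$ is ever needed. You instead combine the $C$-bilinear extension of ii) with the eigenvalue relation $KZ=eZ$ on $T^{1,0}M$ to get $2\,g(Z,W)=0$ and then cancel the $2$; your explicit observation that $2\in C^*$ despite $C$ having zero divisors is exactly the point that needs saying for this variant to be airtight, and with it the argument is complete. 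The two approaches are of equal strength; the paper's avoids any appeal to invertibility, while yours isolates more cleanly that iii) is a formal consequence of ii) together with the $\pm e$ eigenvalue property.
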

\begin{proof} i) follows from reality of $g$ and $\omega$. ii) follows from the definition. For iii), since any element
of $T^{1,0}M$ has the form $X+eKX$, for $X\in TM$, we get
\begin{eqnarray*}
g(X+eKX,Y+eKY)&=&g(X,Y)+g(KX,KY)+ \\
&{}&+\, e\left(g(X,KY)+g(KX,Y)\right)=0\,.
\end{eqnarray*}
\end{proof}
Let now $(z^1,\ldots ,z^n)$ be a local para-holomorphic coordinate system. We denote by
$$
\partial_\a=\frac{\partial}{\partial z^\a}\,,\quad \partial_{\overline\a}=\overline{\partial}_\a =
\frac{\partial}{\partial z^{\overline\a}}\,,\quad \a=1,\ldots ,n
$$
the para-holomorphic and para-anti-holomorphic vector fields. Then we put
$$
g_{\a\overline{\b}}:=g(\partial_\a,\partial_{\overline\b})=\overline{g_{\overline{\a}\b}}
$$
and remark that
$$
g_{\a\b}=g(\partial_\a,\partial_\b)=0\,,\quad g_{\overline{\a}\overline{\b}}=g(\partial_{\overline{\a}},\partial_{\overline{\b}})=0\,.
$$
In these coordinates,
$$
g=g_{\a\overline{\b}}dz^\a d\overline{z}^\b + \overline{g_{\a\overline{\b}}dz^\a d\overline{z}^\b}\,,\quad
\omega =2\sum_{\a\,,\b}\omega_{\a\overline\b}dz^\a\wedge d\overline{z}^{\b}\,,
$$
where $\omega_{\a\overline\b}=e\,g_{\a\overline\b}$. Since $\omega$ is closed, we have
$$
\frac{\partial g_{\a\overline\b}}{\partial z^\c}=\frac{\partial g_{\c\overline\b}}{\partial z^\a}\,,\quad
\frac{\partial g_{\a\overline\b}}{\partial \overline{z}^{\c}}=\frac{\partial g_{\a\overline\c}}{\partial\overline{z}^{\b}}\,.
$$
Proposition \ref{Dolbeault} implies the local existence of a real function $F$ such that
$$
g_{\a\overline{\b}}=\partial_\a\partial_{\overline{\b}}F\,.
$$
The function $F$ is called the {\bf para-K\"ahler potential} of the para-K\"ahler metric $g$.

\subsection{Curvature tensor of a para-K\"ahler metric.}
Denote by $\nabla$ the Levi-Civita connection of
the pseudo-Riemannian metric $g$ and by $\Gamma^C_{AB}= \Gamma^C_{BA}$ the Christoffel symbols
with respect to a para-holomorphic coordinates, where $A,B,C$ denote both Greek indices and
their conjugates.

\begin{lemma}
The only possible non-zero Christoffel symbols are
$$
\Gamma^\c_{\a\b}=\Gamma^{\overline{\c}}_{\overline{\a}\overline {\b}}\,.
$$
\end{lemma}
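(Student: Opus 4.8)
The plan is to use only the two defining properties of the Levi-Civita connection of a para-K\"ahler metric, namely that $\nabla$ is torsion-free and parallelizes $K$ (Definition, part i)), together with the splitting $T^CM=T^{1,0}M\oplus T^{0,1}M$ in para-holomorphic coordinates, where $\partial_\a=\partial/\partial z^\a$ span $T^{1,0}M$, $\partial_{\overline{\a}}$ span $T^{0,1}M$, and (Remark \ref{module}) $K\partial_\a=e\,\partial_\a$, $K\partial_{\overline{\a}}=-e\,\partial_{\overline{\a}}$. First I would extend $\nabla$ and $K$ $C$-linearly to $T^CM$ and write $\nabla_A\partial_B=\Gamma^C_{AB}\partial_C$ with $A,B,C$ ranging over holomorphic and anti-holomorphic indices.

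Applying $\nabla_A$ to the identity $K\partial_\b=e\,\partial_\b$ and using $\nabla K=0$ gives $K(\nabla_A\partial_\b)=e\,\nabla_A\partial_\b$, so $\nabla_A\partial_\b\in\Gamma(T^{1,0}M)$ for every index $A$; equivalently $\Gamma^{\overline{\c}}_{A\b}=0$. Conjugating (or repeating the argument with $K\partial_{\overline{\b}}=-e\,\partial_{\overline{\b}}$) gives $\nabla_A\partial_{\overline{\b}}\in\Gamma(T^{0,1}M)$, i.e. $\Gamma^{\c}_{A\overline{\b}}=0$. This already eliminates every mixed-type symbol, leaving only the families $\Gamma^\c_{\a\b}$ and $\Gamma^\c_{\overline{\a}\b}$ together with their conjugates $\Gamma^{\overline{\c}}_{\overline{\a}\overline{\b}}$ and $\Gamma^{\overline{\c}}_{\a\overline{\b}}$. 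To kill $\Gamma^\c_{\overline{\a}\b}$ I would invoke torsion-freeness: the coordinate fields commute, $[\partial_{\overline{\a}},\partial_\b]=0$, hence $\nabla_{\overline{\a}}\partial_\b=\nabla_\b\partial_{\overline{\a}}$; by the previous step the left-hand side lies in $T^{1,0}M$ and the right-hand side in $T^{0,1}M$, and since $T^{1,0}M\cap T^{0,1}M=0$ both vanish, so $\Gamma^\c_{\overline{\a}\b}=0$ (hence also $\Gamma^{\overline{\c}}_{\a\overline{\b}}=0$). What remains are only $\Gamma^\c_{\a\b}$ and $\Gamma^{\overline{\c}}_{\overline{\a}\overline{\b}}$, and since $g$ is real the connection commutes with conjugation, which forces $\Gamma^{\overline{\c}}_{\overline{\a}\overline{\b}}=\overline{\Gamma^\c_{\a\b}}$ --- the content of the displayed identity.

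As an alternative I would mention the direct route through the Koszul formula $2g(\nabla_A\partial_B,\partial_D)=\partial_A g_{BD}+\partial_B g_{AD}-\partial_D g_{AB}$, using $g_{\a\b}=g_{\overline{\a}\overline{\b}}=0$ together with the closedness relations $\partial_\c g_{\a\overline{\b}}=\partial_\a g_{\c\overline{\b}}$ and $\partial_{\overline{\c}}g_{\a\overline{\b}}=\partial_{\overline{\b}}g_{\a\overline{\c}}$ established above; each term then vanishes except when the lower indices are both holomorphic or both anti-holomorphic. I prefer the $\nabla K=0$ argument since it is shorter and does not require invoking the explicit metric coefficients.

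The only point requiring care --- and the closest thing to an obstacle --- is the bookkeeping imposed by Remark \ref{module}: because $T^CM$ is a module over the ring $C$, which has zero divisors, ``$K$ acts as $e$ on $T^{1,0}M$'' must be used strictly as the identity $K\partial_\a=e\,\partial_\a$, never as a genuine eigenvalue statement, and one may not cancel the factor $e$. Every step above is arranged so that only the idempotents $\tfrac12(1\pm e)$ --- equivalently, the projections of $T^CM$ onto $T^{1,0}M$ and $T^{0,1}M$ --- are applied, which is legitimate; the rest of the verification is routine.
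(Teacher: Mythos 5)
Your proof is correct and follows essentially the same route as the paper: the decisive step in both is that $\nabla K=0$ forces $\nabla_{\partial_A}\partial_\beta$ to remain in $T^{1,0}M$ (the paper simply writes out the component computation ending in $2e\sum_\gamma\Gamma^{\overline\gamma}_{\alpha\beta}\overline{\partial}_\gamma=0$ and dismisses the remaining index combinations as ``similar'', whereas you make explicit that the mixed case $\Gamma^{\gamma}_{\overline\alpha\beta}$ needs torsion-freeness and that the displayed identity is really the reality statement $\Gamma^{\overline\gamma}_{\overline\alpha\overline\beta}=\overline{\Gamma^{\gamma}_{\alpha\beta}}$, both of which the paper absorbs into the symmetry $\Gamma^C_{AB}=\Gamma^C_{BA}$ and the phrase ``the other computations are similar''). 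One minor remark: your caution about never cancelling the factor $e$ is unnecessary, since $e^2=1$ makes $e$ a unit of $C$ (the zero divisors are $1\pm e$, not $e$), so the cancellation in $2e\,\Gamma^{\overline\gamma}_{\alpha\beta}=0$ is perfectly legitimate; your alternative argument via the projections $\tfrac12(1\pm e)$ is of course also valid.
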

\begin{proof} The condition $\nabla K=0$ implies
$$
\nabla_{\partial_A}K\partial_B-K\nabla_{\partial_A}\partial_B=0\,.
$$
Hence,
\begin{eqnarray*}
e\nabla_{\partial_\a}\partial_\b-K\left(\sum_{\c}\Gamma_{\a\b}^{\c}\partial_\c+ \sum_{\c}\Gamma_{\a\b}^{\overline\c}\overline{\partial}_\c\right)
&=&0\,,\\
e\left(\sum_{\c}\Gamma_{\a\b}^{\c}\partial_\c+ \sum_{\c}\Gamma_{\a\b}^{\overline\c}\overline{\partial}_\c\right)-
 e\sum_{\c}\Gamma_{\a\b}^{\c}\partial_\c+ e\sum_{\c}\Gamma_{\a\b}^{\overline\c}\overline{\partial}_\c&=&0\,,\\
2e \sum_{\c}\Gamma_{\a\b}^{\overline\c}\overline{\partial}_\c&=&0\,,
\end{eqnarray*}
which implies $\Gamma_{\a\b}^{\overline\c}=0$. The other computations are similar.
\end{proof}
By the formula relating the Levi-Civita connection to the metric, we can express the Christoffel symbols
by
\begin{equation}\label{christoffel1}
\sum_\a g_{\a\overline{\m}}\Gamma^\a_{\b\c}=\frac{\partial g_{\overline{\m}\b}}{\partial z^\c}\,,\qquad
\sum_\a g_{\overline{\a}\m}\Gamma^{\overline{\a}}_{\b\overline{\c}}=\frac{\partial g_{\m\overline{\b}}}{\partial \overline{z}^\c}\,.
\end{equation}

\begin{prop}\label{proprietacurvatura}
The curvature tensor $R$ and the Ricci tensor $S$ of a para-K\"ahler metric $g$ satisfy the following
relations
\begin{eqnarray}\label{R}
&{}& R(X,Y)\circ K = K\circ R(X,Y)\,,\quad R(KX,KY)=-R(X,Y)\,,\label{formulaR}\\
&{}& S(KX,KY)=-S(X,Y)\label{formulaS}\,,
\end{eqnarray}
for any vector fields $X,Y\in \mathfrak{X}(M)$.
\end{prop}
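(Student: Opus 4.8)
The plan is to deduce all three identities from the single hypothesis $\nabla K=0$ together with the elementary algebraic symmetries of the Riemann tensor, working coordinate-free. The identity $R(X,Y)\circ K=K\circ R(X,Y)$ is the general principle that the curvature operator commutes with any parallel field of endomorphisms: since $K$ is parallel its second covariant derivative vanishes identically, so the Ricci identity applied to the $(1,1)$-tensor $K$ reads
$$0=R(X,Y)(KZ)-K\bigl(R(X,Y)Z\bigr)$$
for all $X,Y,Z\in\mathfrak X(M)$, which is exactly the claim; I would spell this out in a line or two.

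For $R(KX,KY)=-R(X,Y)$ I would pass to the $(0,4)$-tensor $R(X,Y,Z,W):=g(R(X,Y)Z,W)$ and combine three facts: the commutation relation just proved; the $g$-skew-symmetry of $K$ (part of the para-Hermitian hypothesis, $g(KA,B)=-g(A,KB)$); and $K^{2}=\Id$. The first two give, for all $X,Y,Z,W$,
$$R(X,Y,KZ,W)=g\bigl(KR(X,Y)Z,W\bigr)=-g\bigl(R(X,Y)Z,KW\bigr)=-R(X,Y,Z,KW).$$
Feeding this relation into the pair-exchange symmetry $R(A,B,C,D)=R(C,D,A,B)$ twice, together with $K^{2}=\Id$, yields
$$R(KX,KY,Z,W)=R(Z,W,KX,KY)=-R(Z,W,X,K^{2}Y)=-R(Z,W,X,Y)=-R(X,Y,Z,W),$$
and non-degeneracy of $g$ promotes this to the operator identity $R(KX,KY)=-R(X,Y)$.

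For $S(KX,KY)=-S(X,Y)$ I would write the Ricci tensor as a trace, $S(Y,Z)=\tr\bigl(X\mapsto R(X,Y)Z\bigr)$. Rewriting $R(KX,KY)=-R(X,Y)$ (using $K^{2}=\Id$) in the form $R(X,KY)=-R(KX,Y)$, and then applying the commutation relation, the endomorphism $X\mapsto R(X,KY)(KZ)$ becomes $-K\circ L\circ K$ where $L(W):=R(W,Y)Z$; cyclicity of the trace and $K^{2}=\Id$ then collapse $\tr(-K\circ L\circ K)=-\tr(L)$, so $S(KY,KZ)=-S(Y,Z)$.

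I do not anticipate a genuine obstacle here; the only delicate point is the sign bookkeeping, and in particular noticing that it is precisely $K^{2}=+\Id$ (rather than $J^{2}=-\Id$ in the K\"ahler case) that turns the classical identities $R(JX,JY)=R(X,Y)$, $S(JX,JY)=S(X,Y)$ into their para-K\"ahler counterparts carrying a minus sign. As an independent check one could instead argue in para-holomorphic coordinates: the lemma that the only non-zero Christoffel symbols are $\Gamma^{\c}_{\a\b}$ and $\Gamma^{\overline{\c}}_{\overline{\a}\overline{\b}}$ forces the only surviving components of $R$ and $S$ to be the mixed ones, after which the three formulas drop out of $K\partial_{\a}=e\partial_{\a}$, $K\partial_{\overline{\a}}=-e\partial_{\overline{\a}}$ and $e^{2}=1$; I would keep this route in reserve.
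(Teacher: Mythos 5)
Your proposal is correct and follows essentially the same route as the paper: parallelism of $K$ gives the commutation with $R(X,Y)$, the pair-exchange symmetry of the $(0,4)$-curvature tensor combined with the $g$-skew-symmetry of $K$ and $K^2=\Id$ gives $R(KX,KY)=-R(X,Y)$, and a trace/conjugation argument gives the Ricci identity. The only cosmetic difference is that the paper applies $g(KA,KB)=-g(A,B)$ in a single step where you apply $g(KA,B)=-g(A,KB)$ twice; the content is identical.
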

\begin{proof} Since $K$ is parallel with respect to the Levi-Civita connection $\nabla$ of the para-K\"ahler metric
$g$, we easily get that $R(X,Y)$
and $K$ commute for any $X$, $Y$. Hence the first formula of \eqref{formulaR} is proved.\newline
We have:
\begin{eqnarray*}
g(R(KX,KY)V,U) &=& g(R(U,V)KY,KX)\\
&=& g(KR(U,V)Y,KX)\\
&=& -g(R(U,V)Y,X)\\
&=& -g(R(X,Y)V,U)\,,
\end{eqnarray*}
which implies that $R(KX,KY) = - R(X,Y)$.\newline
By definition of the Ricci tensor $S$, we get
\begin{eqnarray*}
S(KX,KY) &=& \tr \left(V\mapsto R(V,KX)KY)\right)\\
&=& -\tr \left(KV\mapsto R(KV,KX)KY)\right)\\
&=& \tr \left(KV\mapsto R(V,X)KY)\right)\\
&=& \tr \left(KV\mapsto KR(V,X)Y)\right)\\
&=& -\tr \left(V\mapsto R(V,X)Y)\right)\\
&=& -S(X,Y)\,,
\end{eqnarray*}
where we used \eqref{formulaR} and the fact that $g(KX,KY)=-g(X,Y)$.
\end{proof}

\begin{prop}\label{curvatura}
The only possible non-zero components of the Riemann curvature tensor R are
\begin{eqnarray*}
&{}& R^\a_{\b\gamma\overline{\d}}\,,\quad
R^\a_{\b\overline{\gamma}\d}\,,\quad
R^{\overline\a}_{\overline{\b}\gamma\d}\,,\quad
R^{\overline\a}_{\b\overline{\gamma}\d}\,,
\\
&{}&
R_{\a\overline{\b}\c\overline{\d}}\,,\quad
R_{\a\overline{\b}\overline{\c}\d}\,,\quad
R_{\overline{\a}\b\c\overline{\d}}\,,\quad
R_{\overline{\a}\b\overline{\c}\d}\,.
\end{eqnarray*}
\end{prop}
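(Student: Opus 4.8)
The plan is to carry out the computation entirely in a local para-holomorphic coordinate system $(z^1,\ldots,z^n)$, reducing the claim to facts already at hand: the restriction on the Christoffel symbols from the preceding Lemma, the algebraic behaviour of $g$ on the splitting $T^CM=T^{1,0}M\oplus T^{0,1}M$ (Lemma \ref{isotropic}), the commutation of the curvature with $K$ (Proposition \ref{proprietacurvatura}), and the flatness of the leaves of $T^\pm M$ recalled in Section 2.4.

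First I would write the curvature tensor through the usual coordinate formula expressing $R^A_{BCD}$, defined by $R(\partial_C,\partial_D)\partial_B=\sum_A R^A_{BCD}\,\partial_A$ (the bracket term being absent since the coordinate fields commute), as a combination of first derivatives of the $\Gamma$'s and of quadratic expressions in the $\Gamma$'s, where capital indices range over both the unbarred and the barred Greek indices. Into this I feed the preceding Lemma: the only nonzero Christoffel symbols are $\Gamma^\gamma_{\alpha\beta}$ and $\Gamma^{\overline\gamma}_{\overline\alpha\overline\beta}$, i.e. those whose three indices are all of the same \emph{type} (all unbarred or all barred). A term-by-term inspection of the curvature formula then shows that $R^A_{BCD}$ can be nonzero only if $C$ and $D$ are of the same type and at least one of $A,B$ is of that type as well, and in particular that the upper index $A$ and the acted-on index $B$ must be of the same type.

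Next I would bring in the two geometric inputs. Proposition \ref{proprietacurvatura} gives $R(X,Y)\circ K=K\circ R(X,Y)$, so $R(X,Y)$ preserves the eigendistributions $T^{1,0}M$ and $T^{0,1}M$; concretely $R(\partial_C,\partial_D)\partial_B$ lies in the eigendistribution of the type of $\partial_B$, whence $R^A_{BCD}=0$ unless $A$ and $B$ are of the same type. Moreover, since $T^\pm M$ are parallel and involutive, by Section 2.4 their leaves are flat totally geodesic submanifolds; equivalently $R(T^+M,T^+M)=0$ and $R(T^-M,T^-M)=0$ as operators, so $R^A_{BCD}=0$ whenever $C$ and $D$ are of the same type. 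Combined with the previous paragraph, this leaves precisely the mixed $(1,3)$-components appearing in the statement: the upper index and the first lower index of one type, and the remaining two lower indices of opposite types. Finally, to pass to the fully covariant components I would lower the upper index, $R_{ABCD}=\sum_E g_{AE}R^E_{BCD}$, and invoke part (iii) of Lemma \ref{isotropic}, namely $g_{\alpha\beta}=g_{\overline\alpha\overline\beta}=0$; since lowering therefore exchanges a barred with an unbarred index, the surviving covariant components are exactly the ones displayed, and all others vanish.

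The index bookkeeping in the curvature formula is routine; the only step needing more than the preceding Lemma is the vanishing of the component with all four indices of the same type, $R^\gamma_{\alpha\beta\delta}$ and its conjugate — this does not drop out of the Christoffel structure or of \eqref{christoffel1} by itself, so I would obtain it from the flatness of the leaves of $T^\pm M$, i.e. from $R(T^\pm M,T^\pm M)=0$, as recorded in Section 2.4.
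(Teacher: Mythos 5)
Your argument is sound and its first half is exactly the paper's: the authors also obtain the constraint on the upper and the first lower index from $R(X,Y)\circ K=K\circ R(X,Y)$, phrased as the vanishing of $g(R(\partial_C,\partial_D)\overline{\partial}_\beta,\overline{\partial}_\gamma)$ via the isotropy of $g$ on $T^{1,0}M$ and $T^{0,1}M$ (Lemma \ref{isotropic}). You diverge in how you kill the components with both curvature arguments of the same type: the paper invokes ``the symmetry properties of the Riemann tensor'', i.e.\ the pair symmetry $R_{ABCD}=R_{CDAB}$, which after lowering the index with $g_{\alpha\beta}=g_{\overline{\alpha}\overline{\beta}}=0$ forces $C,D$ to be of opposite types; you instead import $R(T^+M,T^+M)=R(T^-M,T^-M)=0$ from Section~2.4. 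That route is legitimate but hides a step: $\partial_\gamma,\partial_\delta$ span $T^{1,0}M$, not $T^{\pm}M$, so you must write $\partial_\gamma=\tfrac{1+e}{2}u_\gamma+\tfrac{1-e}{2}v_\gamma$ with $u_\gamma\in T^+M$, $v_\gamma\in T^-M$ and use $(1+e)(1-e)=0$ to get $R(\partial_\gamma,\partial_\delta)=\tfrac{1+e}{2}R(u_\gamma,u_\delta)+\tfrac{1-e}{2}R(v_\gamma,v_\delta)$. A cheaper alternative, already proved in Proposition \ref{proprietacurvatura}, is the identity $R(KX,KY)=-R(X,Y)$: applied to $X=\partial_\gamma$, $Y=\partial_\delta$ it gives $R(\partial_\gamma,\partial_\delta)=e^2R(\partial_\gamma,\partial_\delta)=-R(\partial_\gamma,\partial_\delta)$, hence zero, with no appeal to Section~2.4 at all.

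Two further points. Your description of what the term-by-term inspection of the Christoffel formula yields has the index pairs interchanged: with only $\Gamma^{\gamma}_{\alpha\beta}$ and $\Gamma^{\overline{\gamma}}_{\overline{\alpha}\overline{\beta}}$ nonzero, each term of $R^A_{BCD}=\partial_C\Gamma^A_{DB}-\partial_D\Gamma^A_{CB}+\Gamma^A_{CE}\Gamma^E_{DB}-\Gamma^A_{DE}\Gamma^E_{CB}$ forces $A$ and $B$ to be of the same type and at least one of $C,D$ to share it --- not, as you write, $C$ and $D$ of the same type with one of $A,B$ matching; taken literally, your version would annihilate precisely the mixed components that survive. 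Nothing collapses, because you only use the (correct) ``in particular'' clause and you re-derive it anyway from the commutation with $K$, but the sentence should be corrected or the whole Christoffel paragraph dropped as redundant. Finally, your characterization of the surviving $(1,3)$-components (upper and first lower index of one type, last two indices of opposite types) is the right one; note that it yields $R^{\overline{\alpha}}_{\overline{\beta}\gamma\overline{\delta}}$ and $R^{\overline{\alpha}}_{\overline{\beta}\overline{\gamma}\delta}$ as the barred survivors, i.e.\ the conjugates of the unbarred ones, rather than the third and fourth entries as printed in the proposition, which appear to be misprints.
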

\begin{proof} Since $R(\partial_C,\partial_D)\circ K= K\circ R(\partial_C,\partial_D)$, we have
\begin{eqnarray*}
g(R(\partial_C,\partial_D)\overline{\partial}_\b,\overline{\partial}_\c)&=& -e\,g(R(\partial_C,\partial_D)K\overline{\partial}_\b,
\overline{\partial}_\c)\\
&=& -e\,g(K R(\partial_C,\partial_D)\overline{\partial}_\b,\overline{\partial}_\c) \\
&=&e\,g( R(\partial_C,\partial_D)\overline{\partial}_\b,K\overline{\partial}_\c)\\
&=&-\,g(R(\partial_C,\partial_D)\overline{\partial}_\b,\overline{\partial}_\c)\,.
\end{eqnarray*}
Hence, $R^\a_{\overline{\b}CD}=0$. In a similar way, taking into account the symmetry properties
of the Riemann tensor, the statement can be proved.
\end{proof}
By the formulas above, recalling the expression of $R$ in terms of $\Gamma^A_{BC}$, it follows that
\begin{equation}\label{Rgamma}
R^\a_{\b\c\overline{\d}}=-\frac{\partial \Gamma^{\a}_{\b\c}}{\partial \overline{z}^\d}\,.
\end{equation}

\subsection{The Ricci form in para-holomorphic coordinates.}
The Ricci tensor of the metric $g$ is defined by
$$
\Ric_{AB}=\sum_{C}R^C_{ACB}\,.
$$
Therefore, by Proposition \ref{curvatura} and \eqref{Rgamma}, we obtain
\begin{equation}\label{curvaturaricci}
\Ric_{\a\overline{\b}}=-\sum_\c \frac{\partial \Gamma^\c_{\a\gamma}}{\partial \overline{z}^\b}\,,\qquad
\Ric_{\overline{\a}\b}=\overline{R}_{\a\overline{\b}}\,,\qquad
\Ric_{\a\b}=\Ric_{\overline{\a}\overline{\b}}=0\,.
\end{equation}
We define the {\bf Ricci form $\rho$} of the para-K\"ahler metric $g$ by
\begin{equation}\label{formaricci}
\rho:=\Ric\circ K\,.
\end{equation}
We extend it to a para-complex $2$-form $\rho$. Formula
\eqref{curvaturaricci} shows that $\rho$ has type $(1,1)$ and in local coordinates can be represented as
$$
\rho =2e\,\Ric_{\a\overline{\b}}dz^\a\wedge d\overline{z}^\b\,.
$$
\begin{prop} The Ricci form of a para-K\"ahler manifold is a closed $(1,1)$-form and can be represented by
\begin{equation}\label{ricci}
\rho = e\,\partial\overline{\partial}\log(\det(g_{\a\overline{\b}}))\,.
\end{equation}
In particular,
\begin{equation}\label{tensorericci}
\Ric_{\a\overline{\b}}=-\frac{\partial^2 \log(\det(g_{\lambda\overline{\m}}))}{\partial z^\a\partial \overline{z}^\b}\,.
\end{equation}
\end{prop}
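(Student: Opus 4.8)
The plan is to concentrate on the pointwise identity \eqref{tensorericci}; once that is in hand the remaining assertions of the Proposition are essentially formal. Indeed, $\rho$ is of type $(1,1)$ already by \eqref{curvaturaricci} (the components $\Ric_{\a\b}$ and $\Ric_{\overline\a\overline\b}$ vanish), and substituting \eqref{tensorericci} into the local expression $\rho=2e\,\Ric_{\a\overline\b}\,dz^\a\wedge d\overline{z}^\b$ obtained above rewrites $\rho$ as a constant multiple of $\partial\overline\partial\log\det(g_{\a\overline\b})$, i.e.\ as \eqref{ricci}, with the constants fixed exactly as in the normalization relating the K\"ahler form to its potential above. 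Closedness is then automatic: for any smooth para-complex valued function $f$ the form $\partial\overline\partial f$ is $d$-closed, since $d=\partial+\overline\partial$ together with $\partial^2=\overline\partial^{\,2}=0$ and $\partial\overline\partial=-\overline\partial\partial$ give $d(\partial\overline\partial f)=\overline\partial\partial\overline\partial f=-\partial\overline\partial^{\,2}f=0$.

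For \eqref{tensorericci} I would argue entirely in para-holomorphic coordinates, starting from \eqref{christoffel1}. Since $g$ is non-degenerate the matrix $(g_{\a\overline\b})$ is invertible; denoting its inverse by $(g^{\overline\b\a})$, the first relation in \eqref{christoffel1} gives
$$
\Gamma^{\a}_{\b\c}=\sum_{\m}g^{\overline\m\a}\,\frac{\partial g_{\overline\m\b}}{\partial z^\c}\,.
$$
Contracting the upper index against the second lower index and summing,
$$
\sum_{\c}\Gamma^{\c}_{\a\c}=\sum_{\c,\m}g^{\overline\m\c}\,\frac{\partial g_{\overline\m\a}}{\partial z^\c}=\sum_{\c,\m}g^{\overline\m\c}\,\frac{\partial g_{\overline\m\c}}{\partial z^\a}=\frac{\partial}{\partial z^\a}\log\det(g_{\lambda\overline\m})\,,
$$
where the middle equality uses the metric symmetry $g_{\overline\m\a}=g_{\a\overline\m}$ together with the closedness relation $\partial_\c g_{\a\overline\m}=\partial_\a g_{\c\overline\m}$ established above, and the last equality is the Jacobi formula $\partial_\a\log\det G=\tr(G^{-1}\partial_\a G)$ applied to $G=(g_{\lambda\overline\m})$. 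Substituting into \eqref{curvaturaricci},
$$
\Ric_{\a\overline\b}=-\sum_{\c}\frac{\partial\Gamma^{\c}_{\a\c}}{\partial\overline{z}^{\b}}=-\frac{\partial^{2}}{\partial z^{\a}\,\partial\overline{z}^{\b}}\log\det(g_{\lambda\overline\m})\,,
$$
which is \eqref{tensorericci}.

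I do not expect a genuine obstacle here: all the ingredients are already available --- the expression \eqref{christoffel1} of the Christoffel symbols through $g$, the contraction \eqref{curvaturaricci} of the curvature into $\Ric_{\a\overline\b}$ via \eqref{Rgamma}, and Proposition \ref{curvatura} on which components of $R$ survive. The one step that asks for a little care is the recognition of the contracted Christoffel symbol $\sum_\c\Gamma^\c_{\a\c}$ as the exact derivative $\partial_\a\log\det(g_{\lambda\overline\m})$: this combines the closedness of $\omega$ (the symmetry of $\partial_\c g_{\a\overline\m}$ under $\a\leftrightarrow\c$) with the Jacobi identity for the logarithmic derivative of a determinant. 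A secondary point is to keep the constants $2$ and $e$ consistent, so that the coordinate formula for $\rho$ agrees with $e\,\partial\overline\partial\log\det(g_{\lambda\overline\m})$, exactly as in the normalization used earlier for $\omega$ and its potential $F$.
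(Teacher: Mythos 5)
Your proof is correct and follows essentially the same route as the paper: invert \eqref{christoffel1} to express $\Gamma^\a_{\b\c}$ through the inverse matrix $g^{\overline\m\a}$, contract and apply the Jacobi determinant identity (together with the symmetry of $\partial_\c g_{\a\overline\m}$ coming from $d\omega=0$) to identify $\sum_\c\Gamma^\c_{\a\c}$ with $\partial_\a\log\det(g_{\lambda\overline\m})$, and then differentiate in $\overline z^\b$. If anything, you are more careful than the paper's own write-up about keeping the sign in \eqref{tensorericci} consistent with \eqref{curvaturaricci} and about spelling out why $\partial\overline\partial$ of a function is $d$-closed.
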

\begin{proof} We remark that the metric $g$ defines a $C$-linear bijective
map $g: T^{1,0}M\to T^{0,1*}M$. There exists inverse map $g^{-1} :T^{0,1*}M\to T^{1,0}M$ which is $C$-linear. It is represented by
a matrix $(g^{\overline{\b}\a})$ such that $g_{\a\overline{\b}}g^{\overline{\b}\gamma}=\delta^\gamma_\a$.\newline
Using \eqref{christoffel1} and the identity that is still valid in para-complex case
\begin{equation}\label{determinante}
\frac{\partial\det(g_{\lambda\overline{\m}})}{\partial z^\a}
=\det(g_{\lambda\overline{\m}})\sum_{\b\c}g^{\b\overline{\c}}\frac{\partial g{_{\b\overline{\c}}}}{\partial z^\a}\,,
\end{equation}
we obtain
\begin{equation}\label{christoffel2}
\Gamma^\a_{\b\c}=\sum_\m g^{\a\overline{\m}}\frac{\partial g_{\overline{\m}\b}}{\partial z^\c}\,.
\end{equation}
Hence, by \eqref{determinante} and \eqref{christoffel2} we get
$$
\sum_\c \Gamma^\c_{\a\c} =\frac{\partial \log(\det(g_{\lambda\overline{\m}}))}{\partial z^\a}\,.
$$
The last formula implies
\begin{equation}\label{tensorericci1}
\Ric_{\a\overline{\b}}=\frac{\partial^2 \log(\det(g_{\lambda\overline{\m}}))}{\partial z^\a\partial \overline{z}^\b}\,,
\end{equation}
which proves that
$$
\rho = e\,\partial\overline{\partial}\log(\det(g_{\a\overline{\b}}))\,.
$$
\end{proof}

\subsection{The canonical form of a para-complex manifold with volume form. }
Let $(M,K,\vol)$ be an oriented manifold with para-complex structure $K$ and
a (real) volume form $\vol$. We define a canonical $(1,1)$-form $\rho$ on $M$.
Let $z=(z^1,\ldots ,z^n)$ be local para-holomorphic coordinates
and $(x^\alpha,y^\alpha)$ corresponding real coordinates, where
$z^\alpha=x^\alpha + ey^\alpha$.
\newline
Then we can write
\begin{eqnarray}\label{volumeform}
\vol &=& V(z,\overline{z}) dz^1\wedge d\overline{z}^1 \wedge \ldots \wedge dz^n\wedge d\overline{z}^n \\
&=& U(x,y) dx^1\wedge d y^1 \wedge \ldots \wedge dx^n\wedge dy^n\,.\nonumber
\end{eqnarray}
We may assume that $U(x,y) >0$, as $M$ is oriented.\newline
Since
$$
dz^\alpha\wedge d\overline{z}^\alpha = (dx^\alpha +edy^\alpha)\wedge (dx^\alpha -edy^\alpha)= -2e \,dx^\alpha\wedge dy^\alpha
$$
then
$$
(-2e)^n V(z,\overline{z})= U(x,y)\,.
$$
In particular the, function $(-e)^n V$ is positive.\newline
Let $z'^\alpha(z)=x'^\alpha(z)+e y'^\alpha(z)$ be another para-holomorphic coordinates such that the
associated real coordinates $(x'^\alpha,y'^\alpha)$ have the same orientation and
$V'(z',\overline{z'}), U'(x',y')$ be corresponding functions as in \eqref{volumeform}. Then
$$
V'(z',\overline{z'})=V(z,\overline{z})\,\Delta(z)\,\overline{\Delta(z)}\,,\quad
U'(x',y') = U(x,y)\, J(x,y)
$$
where $\Delta(z) =\det\Vert\frac{\partial z'}{\partial z}\Vert$ and $J=\det\Vert\frac{\partial(x',y')}{\partial (x,y)}\Vert >0$
are the Jacobian of the corresponding transition functions. Since $(-2e)^nV' = U'$ we have
$$
\Delta(z)\,\overline{\Delta(z)} = J(x,y) >0\,.
$$
If we write
$$
\Delta(z)=u(z) + ev(z)\,,
$$
then
$$
\Delta(z)\,\overline{\Delta(z)} =u^2-v^2 = J >0\,.
$$
Hence $u\neq 0$. This implies the following
\begin{lemma} The formula
\begin{equation}\label{canonicalform}
\rho =e\,\partial \overline{\partial} \log \left((-e)^nV\right)
\end{equation}
defines a real global closed $2$-form of type $(1,1)$ on the oriented para-complex manifold $(M,K,\vol)$.
\end{lemma}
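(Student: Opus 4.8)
The plan is to reduce everything to the local representation $\rho = e\,\partial\overline\partial\phi$ with $\phi := \log\bigl((-e)^nV\bigr)$ a \emph{real-valued} function — which makes sense because, by the computation preceding the statement, $(-e)^nV = 2^{-n}U>0$ — and then to verify separately that (i) such a form is automatically real, closed and of type $(1,1)$, and (ii) the locally defined forms agree on overlaps.

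First I would record the identity $e\,\partial\overline\partial = \partial_+\partial_-$, immediate from $\partial\overline\partial = e\,\partial_+\partial_-$ and $e^2=1$. Hence on each para-holomorphic chart $\rho = \partial_+\partial_-\phi$. Since $\partial_+$ and $\partial_-$ are real operators and $\phi$ is real, $\rho$ is a real $2$-form; it belongs to $\wedge^{1,1}_{+\,-}(M)$ by the very definition of $\partial_+,\partial_-$; and it is closed: from $d=\partial_++\partial_-$, $\partial_+^2=\partial_-^2=0$ and $\partial_+\partial_-+\partial_-\partial_+=0$ one gets $d\rho=\partial_+^2\partial_-\phi-\partial_+\partial_-^2\phi=0$. (Reality can also be seen directly from $\overline e=-e$ together with $\overline{\partial\overline\partial\phi}=\overline\partial\partial\phi=-\partial\overline\partial\phi$.)

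Second — the only substantive point — I would prove independence of the chart. Choose the para-holomorphic charts so that the induced real coordinates carry the orientation of $M$ (possible on an oriented para-complex manifold: if a chart induces the opposite orientation, compose it with $z^1\mapsto ez^1$, which is para-holomorphic with real Jacobian $-1$). For two such overlapping systems $z,z'$ one has, in the notation above,
$$
(-e)^nV' = (-e)^nV\cdot\Delta\,\overline\Delta,\qquad \Delta=\det\Bigl\|\frac{\partial z'}{\partial z}\Bigr\|,
$$
where $\Delta$ is a \emph{para-holomorphic} function (a polynomial in the para-holomorphic functions $\partial z'^\alpha/\partial z^\beta$) taking values in $C^*$, since $\Delta\overline\Delta = J>0$. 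Thus $\phi' = \phi + \log(\Delta\overline\Delta)$, and it suffices to show $\partial\overline\partial\log(\Delta\overline\Delta)=0$ on the overlap. On a connected component of the overlap $\Delta$ stays in one connected component of $C^*$; as $\Delta\overline\Delta>0$ this component is $\{u+ev:\ u>|v|\}$ or its negative, so after replacing $\Delta$ by $-\Delta$ if necessary (which changes neither $\Delta\overline\Delta$ nor the claim) we may assume $\Delta$ is valued in the identity component $\exp(C)$. Then $\log\Delta$ is a well-defined para-holomorphic function, so $\overline\partial\log\Delta=0$ and hence $\partial\overline\partial\log\Delta=0$; conjugating, $\log\overline\Delta$ is para-anti-holomorphic, so $\partial\log\overline\Delta=0$ and hence $\partial\overline\partial\log\overline\Delta=-\overline\partial\partial\log\overline\Delta=0$. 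Adding the two, $\partial\overline\partial\log(\Delta\overline\Delta)=0$, so $\rho'=e\,\partial\overline\partial\phi'=e\,\partial\overline\partial\phi=\rho$ on the overlap, and the local forms patch to a global $2$-form with the asserted properties.

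The main obstacle is the bookkeeping around the para-complex logarithm: unlike $\C^*$, the group $C^*$ is disconnected (four components), so $\log$ exists only on the identity component, and one must use the positivity of the real Jacobian $J$ and the $\pm\Delta$ adjustment to land there. Once this is arranged, the vanishing of $\partial\overline\partial\log(\Delta\overline\Delta)$ is precisely the para-complex analogue of the classical fact that $\log|\det(\text{holomorphic Jacobian})|^2$ is pluriharmonic; everything else is formal, following from the calculus of $\partial_+,\partial_-,\partial,\overline\partial$ developed above.
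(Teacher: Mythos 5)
Your proposal is correct and follows essentially the same route as the paper: locally the form is $e\,\partial\overline{\partial}$ of the real function $\log((-e)^nV)$, which makes sense because $(-e)^nV>0$, and chart-independence is obtained from $V'=V\,\Delta\overline{\Delta}$ by replacing $\Delta$ with $\epsilon\Delta$ ($\epsilon=\pm1$) so that its real part is positive, whence $\log(\epsilon\Delta)$ is para-holomorphic and $\partial\overline{\partial}\log(\Delta\overline{\Delta})=0$. The only difference is that you spell out reality, closedness and type via $e\,\partial\overline{\partial}=\partial_+\partial_-$, details the paper leaves implicit.
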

The form $\rho$ is called the {\bf canonical form} on $(M,K,\vol)$.
\begin{proof} Since $(-e)^nV(z,\overline{z})$ is a positive smooth function of
$(z,\overline{z})$, logarithm of $(-e)^nV$ is a well defined
smooth function and $\rho =\partial \overline{\partial} \log \left((-e)^nV\right)$ is a $(1,1)$-form. It remains to check that if
$z'$ is another
coordinate system and $V'=V(z,\overline{z})\,\Delta(z)\,\overline{\Delta(z)}$ is the associated function, then \newline
$$
\rho' =\partial \overline{\partial} \log\left( (-e)^nV'\right)= \rho\,.
$$
Since the real part $u$ of $\Delta(z) =u+ e v$ is not zero, we can choose $\epsilon =\pm 1$ such that
$\Delta^\epsilon := \epsilon \Delta$ and $\overline{\Delta^\epsilon}:=\epsilon \overline{\Delta}$
has positive real part, then the function $\log ( \Delta^\epsilon(z))$ is a para-holomorphic function.
In particular, $\overline{\partial}\log(\epsilon \Delta(z))=0$. Similarly, $\log ( \overline{\Delta^\epsilon(z))}$ is
anti-para-holomorphic. Then
\begin{eqnarray*}
\rho' &=&e\,\partial \overline{\partial} \log \left((-e)^nV'\right) = e\,\partial \overline{\partial}
\log \left((-e)^nV\,\Delta^\epsilon(z)\overline{\Delta^\epsilon(z)}\right)\\
&=&e\,\partial \overline{\partial} \left[\log\left((-e)^nV\right)+\log\left(\Delta^\epsilon(z)\right)+
\log\left(\overline{\Delta^\epsilon(z)}\right) \right]\\
&=&e\,\partial \overline{\partial} \log\left((-e)^nV\right) =\rho\,.
\end{eqnarray*}
\end{proof}
Hence, from formulas \eqref{ricci} and \eqref{canonicalform}, we get the following
\begin{cor} Let $(M,K,\omega,g)$ be an oriented para-K\"ahler manifold and $\vol^g$ the volume form associated with the
metric $g$. Then the Ricci form $\rho$ of the para-K\"ahler manifold
$M$ coincides with the canonical form of $(M,K,vol^g)$. In
particular $\rho$ depends only on the para-complex structure and the
volume form.
\end{cor}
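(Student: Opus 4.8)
The plan is to compute the density of the metric volume form $\vol^g$ in a para-holomorphic chart, to identify it up to an inessential constant with $|\det(g_{\a\overline\b})|$, and thereby to recognize the defining formula \eqref{canonicalform} of the canonical form of $(M,K,\vol^g)$ as formula \eqref{ricci} for the Ricci form $\rho$. Since both $\rho$ and the canonical form are already known to be global closed $(1,1)$-forms, it is enough to carry out this comparison in one chart.

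First I would fix para-holomorphic coordinates $(z^1,\dots,z^n)$, $z^\a=x^\a+ey^\a$, and relate the frame $\{\partial_\a,\partial_{\overline\a}\}$ of $T^CM$ to the real coordinate frame via $\partial/\partial x^\a=\partial_\a+\partial_{\overline\a}$ and $\partial/\partial y^\a=e(\partial_\a-\partial_{\overline\a})$. By Lemma \ref{isotropic} iii) the Gram matrix of (the $C$-bilinear extension of) $g$ in the frame $\{\partial_\a,\partial_{\overline\a}\}$ has vanishing diagonal blocks and off-diagonal blocks $M=(g_{\a\overline\b})$ and $M^{T}$; moreover, combining $g(\overline Z,\overline W)=\overline{g(Z,W)}$ from Lemma \ref{isotropic} i) with the symmetry of $g$ gives $g_{\b\overline\a}=\overline{g_{\a\overline\b}}$, that is $M^{T}=\overline M$, whence $\overline{\det M}=\det M$. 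Thus $\det(g_{\a\overline\b})$ is a real, nowhere-vanishing function (nonzero since $g$ is nondegenerate). Now the passage from the real frame to $\{\partial_\a,\partial_{\overline\a}\}$ has determinant $(-2e)^n$ and the Gram matrix in the latter frame has determinant $(-1)^n(\det M)^2$, so the Gram matrix $G$ of $g$ in the real frame satisfies $|\det G|=4^{\,n}(\det(g_{\a\overline\b}))^2$; hence in a positively oriented chart $\vol^g=\sqrt{|\det G|}\;dx^1\wedge dy^1\wedge\cdots\wedge dx^n\wedge dy^n=2^{\,n}\,|\det(g_{\a\overline\b})|\;dx^1\wedge dy^1\wedge\cdots\wedge dx^n\wedge dy^n$.

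Next I would pass to the normalization of this subsection: from $dz^\a\wedge d\overline z^\a=-2e\,dx^\a\wedge dy^\a$ one gets $\vol^g=V\,dz^1\wedge d\overline z^1\wedge\cdots\wedge dz^n\wedge d\overline z^n$ with $(-2e)^nV=2^{\,n}|\det(g_{\a\overline\b})|$, hence
$$
(-e)^nV=\bigl|\det(g_{\a\overline\b})\bigr|.
$$
Therefore, by \eqref{canonicalform}, the canonical form of $(M,K,\vol^g)$ is $e\,\partial\overline\partial\log\bigl((-e)^nV\bigr)=e\,\partial\overline\partial\log\bigl|\det(g_{\a\overline\b})\bigr|$. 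Since $\det(g_{\a\overline\b})$ is real and of locally constant sign, $\log|\det(g_{\a\overline\b})|$ differs locally from $\log\det(g_{\a\overline\b})$ by a constant, which is annihilated by $\partial\overline\partial$; so by \eqref{ricci} the Ricci form equals $e\,\partial\overline\partial\log|\det(g_{\a\overline\b})|$ as well. Comparing the two expressions yields $\rho=$ canonical form of $(M,K,\vol^g)$, and the final assertion is then immediate from the preceding Lemma, which shows the canonical form depends only on $K$ and the volume form.

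The only genuinely computational point is the identity $|\det G|=4^n(\det(g_{\a\overline\b}))^2$, i.e. the reduction of the real $2n\times2n$ Gram matrix to the para-complex block form; I expect this (routine) determinant computation to be the main obstacle, the remainder being bookkeeping with the factors $-2e$ and $-e$ together with an appeal to the coordinate-independence of $\rho$ and of the canonical form, which has already been established.
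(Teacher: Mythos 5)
Your proposal is correct and follows essentially the same route as the paper, which derives the corollary simply by comparing formula \eqref{ricci} for $\rho$ with the defining formula \eqref{canonicalform} of the canonical form. The only difference is that you explicitly carry out the determinant computation identifying $(-e)^nV$ for $\vol^g$ with $\bigl|\det(g_{\a\overline\b})\bigr|$ (and note that the sign is killed by $\partial\overline{\partial}\log$), a step the paper leaves implicit.
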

\medskip
Now we derive a formula for the canonical form $\rho$ in term of divergence.\newline
Let $(M,K)$ be a $2n$-manifold with a para-complex structure and $\vol$
 be a (real) volume form on $M$. With respect to the local
 para-holomorphic coordinates $z^\alpha$ on $M$, we can write
 $$
 \vol = V(z,\bar z)dz^1 \wedge \ldots\wedge dz^n \wedge d \bar z^1
 \wedge \ldots \wedge d \bar z^n\,,
 $$
 where $V$ is a real or imaginary valued function depending on the parity of $n$. \par
 We define the {\bf divergence} $\diver X$
 of a vector field $X \in \mathfrak X (M)$ on $(M, vol)$ by
$$
\mathcal{L}_X \,\vol = (\diver X) \vol\,,
$$
where $\mathcal{L}_X$ denotes the Lie derivative along the vector field $X$.
Then for $X,Y \in \mathfrak X(M)$ and any smooth function $f \in \mathcal C^\infty(M)$ we have
\begin{equation}\label{proprietadivergenza}
X (\diver Y) - Y(\diver X) = \diver [X,Y]\,,\quad
\diver(fX)= f \diver X + X(f).
\end{equation}
Moreover, setting $\partial_i =\frac{\partial}{\partial x^i}$, we have
$$\diver(\partial_i) = \partial_i \log ((-e)^nV)$$ and if $X =
X^i
\partial_i$ then
\begin{equation}\label{divergenza}
\diver X = \partial_i X^i + \partial_i (\log ((-e)^nV)) X^i\,.
\end{equation}
For any
$$
Z = Z^\alpha \partial_\alpha + Z^{ \bar \alpha} \partial_{\bar \alpha}\,,\quad W = W^\beta \partial_\beta + W^{ \bar \beta}
\partial_{\bar \beta}\,,
$$
we denote by
$$
h(Z,W) = \partial_\alpha \partial_{\bar \beta} \log ((-e)^nV) Z^\alpha W^{\bar \beta} +
\partial_\alpha \partial_{\bar \beta} \log ((-e)^nV) W^\alpha Z^{\bar \beta}
$$
the para-Hermitian form associated with $\rho$.
Then
$$
\rho(Z,W) = h(Z,KW)\,.
$$
The following lemma will be used in the next section
\begin{lemma}\label{rho}
Let $X, Y$ be vector fields on $M$ such that $\diver X =\diver
Y=0$ and $\mathcal{L}_XK=\mathcal{L}_YK=0$, where $\mathcal{L}$ denotes the Lie derivative. Then
\begin{equation}\label{quattro}
2\rho (X,Y) =\diver(K[X,Y])\,.
\end{equation}
\end{lemma}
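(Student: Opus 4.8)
The plan is to pass to local para-holomorphic coordinates and to combine the two elementary divergence identities \eqref{proprietadivergenza} with the coordinate description of the hypotheses. First I would reduce the statement: since $\mathcal{L}_XK=0$ is the same as $[X,KW]=K[X,W]$ for every vector field $W$, taking $W=Y$ gives $K[X,Y]=[X,KY]$; applying the first identity of \eqref{proprietadivergenza} to $X$ and $KY$ and using $\diver X=0$ then gives
$$\diver(K[X,Y])=\diver[X,KY]=X(\diver(KY))-(KY)(\diver X)=X(\diver(KY)),$$
so it is enough to prove $2\rho(X,Y)=X(\diver(KY))$.

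Next I would compute $\diver(KY)$ using $\diver Y=0$. In para-holomorphic coordinates $z^1,\dots,z^n$ write $Y=Y^\alpha\partial_\alpha+Y^{\overline\alpha}\partial_{\overline\alpha}$ and put $\phi:=\log((-e)^nV)$, so that $\diver\partial_A=\partial_A\phi$ and the computation behind \eqref{divergenza}, carried out in these coordinates, yields $\diver Y=\sum_\alpha(\partial_\alpha Y^\alpha+Y^\alpha\partial_\alpha\phi)+\sum_{\overline\alpha}(\partial_{\overline\alpha}Y^{\overline\alpha}+Y^{\overline\alpha}\partial_{\overline\alpha}\phi)$. Since $KY=eY^\alpha\partial_\alpha-eY^{\overline\alpha}\partial_{\overline\alpha}$, the same formula applied to $KY$ multiplies the first sum by $e$ and the second by $-e$; as the two sums are opposite (by $\diver Y=0$), this gives $\diver(KY)=2e\sum_\alpha(\partial_\alpha Y^\alpha+Y^\alpha\partial_\alpha\phi)$.

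Then I would differentiate this along $X=X^\beta\partial_\beta+X^{\overline\beta}\partial_{\overline\beta}$. A one-line computation shows that $\mathcal{L}_YK=0$ means, in para-holomorphic coordinates, $\partial_{\overline\beta}Y^\alpha=0$ and $\partial_\beta Y^{\overline\alpha}=0$, i.e.\ the $Y^\alpha$ are para-holomorphic and the $Y^{\overline\alpha}$ para-anti-holomorphic. Using this, $X$ applied to the bracket $\sum_\alpha(\partial_\alpha Y^\alpha+Y^\alpha\partial_\alpha\phi)$ splits as $X^\beta\partial_\beta$ of that bracket plus $\sum_{\alpha,\overline\beta}Y^\alpha X^{\overline\beta}\partial_\alpha\partial_{\overline\beta}\phi$; replacing the bracket, again by $\diver Y=0$, with $-\sum_{\overline\alpha}(\partial_{\overline\alpha}Y^{\overline\alpha}+Y^{\overline\alpha}\partial_{\overline\alpha}\phi)$ and differentiating it (the pure derivatives of the $Y^{\overline\alpha}$ dropping by anti-holomorphy) turns this into $-\sum_{\alpha,\overline\beta}X^\alpha Y^{\overline\beta}\partial_\alpha\partial_{\overline\beta}\phi$, so that $X(\diver(KY))=2e\sum_{\alpha,\overline\beta}\partial_\alpha\partial_{\overline\beta}\phi\,(Y^\alpha X^{\overline\beta}-X^\alpha Y^{\overline\beta})$. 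On the other hand, evaluating $\rho(X,Y)=h(X,KY)$ from the displayed formula for the para-Hermitian form $h$, with $(KY)^\alpha=eY^\alpha$ and $(KY)^{\overline\beta}=-eY^{\overline\beta}$, gives $\rho(X,Y)=e\sum_{\alpha,\overline\beta}\partial_\alpha\partial_{\overline\beta}\phi\,(Y^\alpha X^{\overline\beta}-X^\alpha Y^{\overline\beta})$, exactly one half. Combined with the first step this is \eqref{quattro}.

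The delicate point is this last step: one must track precisely which second-order terms survive when $X$ hits $\diver(KY)$. The mechanism is that, after using $\diver Y=0$ a second time, the holomorphic--holomorphic second derivatives of the components of $Y$ are killed (their anti-holomorphic analogues vanish) while those of $\phi$ are traded for mixed ones $\partial_\alpha\partial_{\overline\beta}\phi$, which is exactly what $\rho$ detects; this is also where one has to keep the powers of $e$ and the two normalizations of the volume density ($V$ versus the positive function $(-e)^nV$) consistent. Everything else is formal.
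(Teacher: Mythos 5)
Your argument is correct, and all four hypotheses are used where they should be: $\mathcal{L}_XK=0$ and $\diver X=0$ in the initial reduction $\diver(K[X,Y])=\diver[X,KY]=X(\diver(KY))$, and $\diver Y=0$ together with $\mathcal{L}_YK=0$ (i.e.\ para-holomorphy of the $Y^\alpha$ and anti-para-holomorphy of the $Y^{\overline\alpha}$) in the coordinate evaluation of $X(\diver(KY))$. The route differs from the paper's in its organization rather than in substance. The paper performs your reduction step at the \emph{end} of its chain and begins instead by complexifying: it sets $X^c=X+eKX$, $Y^c=Y+eKY$, writes $2\rho(X,Y)=-2\im h(X^c,\overline{Y^c})$, and then invokes, without proof, the Koszul-type identity $h(X^c,\overline{Y^c})=X^c(\diver\overline{Y^c})$ before taking the imaginary part and unwinding. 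Your steps 2--4 are precisely a proof of that asserted identity: for $X^c$ of type $(1,0)$ with para-holomorphic components and $\overline{Y^c}$ of type $(0,1)$ with anti-para-holomorphic components, $X^c(\diver\overline{Y^c})=X^\alpha Y^{\overline\beta}\partial_\alpha\partial_{\overline\beta}\log((-e)^nV)$ because all derivatives of the components drop out, which is exactly the mechanism you isolate as ``the delicate point.'' So your version is more elementary and more self-contained (it stays with real vector fields and makes the cancellation of second-order terms explicit), at the cost of a longer computation; the paper's version is shorter and makes the structural reason visible (the statement is the imaginary part of a single identity for para-holomorphic fields), at the cost of leaving that identity to the reader. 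One cosmetic caveat: the overall normalization of $\rho$ versus $h$ (the paper's relation $\rho(Z,W)=h(Z,KW)$ and the factor $2$ in \eqref{quattro}) is convention-laden, and you correctly adopt the paper's own displayed definitions, so your factor of $2$ comes out consistent with the paper's.
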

\begin{proof} Set
$$
X^c = X + eKX\,,\,\,\, Y^c = Y + eKY\,;
$$
then, $X^c$ and $Y^c$ are para-holomorphic vector fields and by definition of $\rho$ and $h$ we have
\begin{eqnarray*}
2\rho(X,Y) & = &\frac{e}{2} \left( -h(X^c,\overline{Y^c})+h(\overline{X^c},Y^c) \right)\\
&=& \frac{e}{2}\left(-h(X^c,\overline{Y^c})+\overline{h(X^c,\overline{Y^c})} \right)\\
& =& -2\im\, h(X^c,\overline{Y^c})\\
&=& -\im\, (X^c(\diver\overline{Y^c}))\\
&=& (X(\diver KY)- K X(\diver Y))\\
&=& \,\diver ([X,KY])\\
&=& \,\diver (K[X,Y])\,,
\end{eqnarray*}
where in the last equality we used that $\mathcal{L}_XK=0$.
\end{proof}

\section{Homogeneous para-K\"ahler manifolds.}
\subsection{The Koszul formula for the canonical form.}
Let $M = G/H$ be a homogeneous reductive manifold
with an invariant volume form $\vol$. We fix a reductive decomposition $\ggg = \gh + \gm$
of the Lie algebra $\ggg$ and identify the subspace $\gm$ with the tangent space $T_oM$ at
the point $o = eH$.\\
We denote
by $\Omega = \pi^* \vol$ the pull back of $\vol$ under the natural
projection $\pi : G \rightarrow G/H$. We choose a basis $\omega^i$
of the space of left-invariant horizontal 1-forms on $G$ such
that $\Omega = \omega^1 \wedge \ldots \wedge \omega^m$ and denote by $X_i$ the dual
basis of horizontal left-invariant vector fields. (The
horizontality means that $\omega^i$ vanish on fibers of $\pi$ and the vector fields $X_i$
tangent to left-invariant distribution
generated by $\mathfrak{m}$). For any element $X \in \mathfrak{g}$ we denote by the
same letter $X$ the corresponding left-invariant vector field on
$G$ and by $X'$ the corresponding right-invariant vector field on $G$.
\begin{lemma}(Koszul \cite{K})\label{koszul} Let $X$ be the velocity vector field on $M = G/H$
of a 1-parameter subgroup of $G$. Then the pull-back
$\pi^*(\diver X)$ of the divergence $\diver X$ is given by
$$
\pi^* (\diver X) = \sum_{i=1}^{2n} \omega^i ([X_i,X])\,.
$$
\end{lemma}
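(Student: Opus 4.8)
The plan is to pull the whole identity back to $G$ and evaluate it on the left-invariant frame $\{X_1,\dots,X_{2n}\}$ dual to $\{\omega^1,\dots,\omega^{2n}\}$. Let $\bar X$ denote the vector field on $G$ that is $\pi$-related to the velocity field $X$ on $M$; for the velocity field of a one-parameter subgroup of $G$ acting on $M=G/H$ this is the corresponding right-invariant field $X'$ on $G$, which is automatically $\pi$-projectable since the left translations it generates on $G$ cover the flow of $X$ on $M$. Because the Lie derivative is natural under $\pi^{*}$ for $\pi$-related fields and $\mathcal L_X\vol=(\diver X)\,\vol$ by the definition of the divergence, and because $\Omega=\pi^{*}\vol$,
\[
\mathcal L_{\bar X}\Omega=\pi^{*}\!\bigl(\mathcal L_{X}\vol\bigr)=\pi^{*}\!\bigl((\diver X)\,\vol\bigr)=(\pi^{*}\diver X)\,\pi^{*}\vol=(\pi^{*}\diver X)\,\Omega .
\]

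Next I would evaluate both sides on $(X_1,\dots,X_{2n})$. Since $\omega^i(X_j)=\delta^i_j$ we have $\Omega(X_1,\dots,X_{2n})=\det\bigl(\omega^i(X_j)\bigr)=1$, so the right-hand side of the displayed identity contributes exactly $\pi^{*}(\diver X)$. For the left-hand side I would use the derivation expansion of the Lie derivative of a form,
\[
(\mathcal L_{\bar X}\Omega)(X_1,\dots,X_{2n})=\bar X\bigl(\Omega(X_1,\dots,X_{2n})\bigr)-\sum_{i=1}^{2n}\Omega\bigl(X_1,\dots,[\bar X,X_i],\dots,X_{2n}\bigr).
\]
The first term is $\bar X(1)=0$. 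In the $i$-th summand one replaces the $i$-th argument by $[\bar X,X_i]$; expanding $\Omega=\omega^1\wedge\dots\wedge\omega^{2n}$ by multilinearity and using $\omega^j(X_k)=\delta^j_k$ together with the antisymmetry of $\Omega$, every contribution vanishes except the one coming from the component of $[\bar X,X_i]$ along $X_i$, so that $\Omega(X_1,\dots,[\bar X,X_i],\dots,X_{2n})=\omega^i([\bar X,X_i])$. Combining,
\[
\pi^{*}(\diver X)=-\sum_{i=1}^{2n}\omega^i([\bar X,X_i])=\sum_{i=1}^{2n}\omega^i([X_i,\bar X]),
\]
which is the asserted formula after identifying $\bar X$ with the field denoted $X$ in the statement, as in the conventions fixed before the lemma.

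The computation is short, so there is no genuine obstacle; the only two points needing care are that the lift $\bar X$ really is $\pi$-related to $X$, so that the first displayed equality is legitimate (this holds for the velocity field of a $G$-action because it is covered by the corresponding invariant field on $G$), and the bookkeeping in the frame evaluation, whose essential content is that $\Omega$ is the wedge of the coframe dual to $\{X_i\}$, which collapses each term $\Omega(X_1,\dots,[\bar X,X_i],\dots,X_{2n})$ to the single coefficient $\omega^i([\bar X,X_i])$. One could alternatively run the same computation through Cartan's identity $\mathcal L_{\bar X}\Omega=d\,\iota_{\bar X}\Omega+\iota_{\bar X}\,d\Omega$, but the direct evaluation on the dual frame is the cleanest route; moreover it makes clear that the same identity $\pi^{*}(\diver Z)=\sum_i\omega^i([X_i,\bar Z])$ holds for any vector field $\bar Z$ on $G$ projecting to a field $Z$ on $M$, which is the form in which the lemma gets used later.
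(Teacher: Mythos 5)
Your argument is correct and is essentially the paper's own proof, which consists of the single chain $\pi^*(\diver X)\,\Omega=\pi^*(\mathcal L_X\vol)=\mathcal L_{\tilde X}\Omega=\sum_i\omega^i([X_i,X])\,\Omega$ for a projectable lift $\tilde X$; you have merely filled in the frame-evaluation computation behind the last equality. Your closing remark that the identity holds for any projectable field (not just lifts of velocity fields of one-parameter subgroups) is exactly the form in which the lemma is applied in Proposition \ref{propkoszul}, and is already implicit in the paper's phrasing ``for any projectable vector field $\tilde X$''.
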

\begin{proof} For any projectable vector field $\tilde X$ on $G$ with
the projection $X$ we have
$$
\pi^* (\diver X) \Omega=\pi^* (\mathcal{L}_X \,vol ) = \mathcal{L}_{\tilde X} \, \Omega =
\sum \omega^i ([X_i,X])\Omega
\,.
$$
\end{proof}
Now we assume that $G/H$ is endowed with an invariant para-complex
structure $K$ and we derive a formula for the canonical form. We
extend the endomorphism $K|_{T_oM} = K|_{\gm}$ to the
${\Ad}_H$-invariant endomorphism $\tilde{K}$ of $\ggg$ with
kernel $\gh$ and we denote by the same symbol the associated
left-invariant field of endomorphisms on the group $G$.

\begin{prop}\label{propkoszul} Let $M = G/H$ be a homogeneous manifold with an
invariant volume form $\vol$ and an invariant para-complex
structure $K$. Then the pull-back $\pi^* \rho$ to $G$ of
the canonical $2$-form $\rho$ associated with $(\vol, K)$ at the
point $o = eH$ is given by
$$
2 \pi^*\rho (X,Y) = \sum \omega^i \left([\tilde{K}[X,Y],X_i]- \tilde{K}[[X,Y], X_i]\right)\,,\quad \forall\,X,Y \in \ggg\,.
$$
In particular,
$$
2\pi^*\rho_e =d\psi\,,
$$
where $\psi$ is the $\ad_{\mathfrak{h}}$-invariant $1$-form on $\ggg$ given by
\begin{equation}\label{koszulform1}
\psi(X) =-\tr_{\mathfrak{g}/\gh} \left(\hbox{\rm ad}_{\tilde{K}X} - \tilde{K} \hbox{\rm ad}_X\right)\,,\quad \forall\,X \in \ggg\,.
\end{equation}
\end{prop}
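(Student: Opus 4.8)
The plan is to reduce the statement to Lemma~\ref{rho} and then evaluate the resulting divergence by the device used in Lemma~\ref{koszul}. For $X,Y\in\ggg$ write $X^{*},Y^{*}\in\mathfrak X(M)$ for the fundamental (velocity) vector fields of the $G$-action on $M$. Since $\vol$ is $G$-invariant, $X^{*}$ generates a one-parameter group of volume-preserving diffeomorphisms, so $\diver X^{*}=\diver Y^{*}=0$; since $K$ is $G$-invariant, $\mathcal L_{X^{*}}K=\mathcal L_{Y^{*}}K=0$. Hence Lemma~\ref{rho} applies and yields, as functions on $M$,
\[
2\,\rho(X^{*},Y^{*})=\diver\bigl(K[X^{*},Y^{*}]\bigr)\,.
\]
Because $(\pi^{*}\rho)_{e}(X,Y)=\rho_{o}\bigl(d\pi_{e}X,d\pi_{e}Y\bigr)=\rho(X^{*},Y^{*})(o)$, everything comes down to computing $\pi^{*}\bigl(\diver(K[X^{*},Y^{*}])\bigr)$ at $e$.

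First I would exhibit a $\pi$-related lift to $G$ of the vector field $V:=K[X^{*},Y^{*}]$. Using $[X^{*},Y^{*}]=-[X,Y]^{*}$, the identity $Z^{*}_{gH}=(L_{g})_{*}\bigl(\mathrm{pr}_{\gm}\Ad(g^{-1})Z\bigr)$ for $Z=[X,Y]\in\ggg$ (where $L_{g}$ denotes the action of $g$ on $M$), and the $G$-invariance of $K$ in the form $K_{gH}\circ(L_{g})_{*}=(L_{g})_{*}\circ K_{o}$, one gets $V_{gH}=-(L_{g})_{*}\,\tilde K\bigl(\Ad(g^{-1})Z\bigr)$, where $\tilde K=K_{o}\circ\mathrm{pr}_{\gm}$ is the $\Ad_{H}$-equivariant extension of $K$ with kernel $\gh$ of the statement. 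Accordingly set, on $G$, $\tilde V_{g}:=-(L^{G}_{g})_{*}\bigl(\tilde K\,\Ad(g^{-1})Z\bigr)$, with $L^{G}_{g}$ left translation in $G$. The $\Ad_{H}$-equivariance of $\tilde K$ makes $\tilde V$ right-$H$-invariant, hence projectable, and $d\pi(\tilde V_{g})=V_{gH}$ because $\tilde K\Ad(g^{-1})Z\in\gm$; so $\tilde V$ is the desired lift. The computation in the proof of Lemma~\ref{koszul} shows more generally that $\pi^{*}(\diver V)=\sum_{i}\omega^{i}\bigl([X_{i},\tilde V]\bigr)$ for any projectable lift $\tilde V$ of $V$. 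Since the flow of the left-invariant field $X_{i}$ is right translation by $\exp(tX_{i})$, we have $[X_{i},\tilde V]_{e}=\frac{d}{dt}\big|_{0}\bigl(R_{\exp(-tX_{i})}\bigr)_{*}\tilde V_{\exp(tX_{i})}$; recognizing $R_{\exp(-tX_{i})}\circ L^{G}_{\exp(tX_{i})}$ as conjugation by $\exp(tX_{i})$, this equals $\frac{d}{dt}\big|_{0}\bigl(-\Ad(\exp tX_{i})\,\tilde K\,\Ad(\exp(-tX_{i}))Z\bigr)=-[X_{i},\tilde KZ]+\tilde K[X_{i},Z]$. Combining the above gives $2\pi^{*}\rho(X,Y)=\sum_{i}\omega^{i}\bigl([\tilde K[X,Y],X_{i}]-\tilde K[[X,Y],X_{i}]\bigr)$, which is the asserted formula.

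For the ``in particular'' part, $\sum_{i}\omega^{i}([\tilde KZ,X_{i}])=\tr_{\ggg/\gh}\ad_{\tilde KZ}$ and $\sum_{i}\omega^{i}(\tilde K[Z,X_{i}])=\tr_{\ggg/\gh}(\tilde K\ad_{Z})$, so the right-hand side equals $\tr_{\ggg/\gh}(\ad_{\tilde K[X,Y]}-\tilde K\ad_{[X,Y]})=-\psi([X,Y])$ with $\psi$ as in \eqref{koszulform1}; by the Maurer--Cartan formula $-\psi([X,Y])=(d\psi)_{e}(X,Y)$ for the left-invariant extension of $\psi$. Finally, by \eqref{canonicalform} the canonical form $\rho$ depends only on $K$ and $\vol$, hence is $G$-invariant, so $\pi^{*}\rho$ is a left-invariant $2$-form on $G$; since $d\psi$ is left-invariant as well and the two agree at $e$, we conclude $2\pi^{*}\rho=d\psi$ on all of $G$. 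That $\psi$ is $\ad_{\gh}$-invariant follows from $\ad_{V}\tilde K=\tilde K\ad_{V}$ for $V\in\gh$ (the $\Ad_{H}$-equivariance) together with reductivity $\ad_{V}\gm\subseteq\gm$: these force $\psi(\ad_{V}W)=-\tr_{\ggg/\gh}\bigl[\ad_{V},\,\ad_{\tilde KW}-\tilde K\ad_{W}\bigr]=0$ for every $W\in\ggg$.

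The main obstacle I foresee is the middle step: producing the projectable lift $\tilde V$ and carrying out the differentiation that gives $[X_{i},\tilde V]_{e}$, where one must be scrupulous about sign conventions (the fundamental vector fields define a Lie algebra anti-homomorphism, $[X^{*},Y^{*}]=-[X,Y]^{*}$), about $\tilde K$ versus $K$, and about the placement of $\mathrm{pr}_{\gm}$. The reduction via Lemma~\ref{rho} and the concluding Maurer--Cartan rewriting are routine.
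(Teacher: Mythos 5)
Your proposal is correct and follows essentially the same route as the paper: reduce to Lemma \ref{rho} via the fundamental vector fields (divergence-free and $K$-preserving by invariance of $\vol$ and $K$), apply the divergence formula of Lemma \ref{koszul} to a projectable lift of $K[X^{*},Y^{*}]$, and finish with the Maurer--Cartan identity. The only difference is in the middle step, and it is cosmetic: where the paper computes $[X_i,\tilde K Z']$ by noting that left-invariant fields commute with right-invariant ones (so the bracket equals $(\mathcal L_{X_i}\tilde K)Z'$), you differentiate the explicit lift $-(L^{G}_{g})_{*}\bigl(\tilde K\,\mathrm{Ad}(g^{-1})Z\bigr)$ along the flow of $X_i$; this is the same computation, with the sign $[X^{*},Y^{*}]=-[X,Y]^{*}$ tracked more carefully than in the paper's write-up.
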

The $1$-form $\psi$ (and the associated left-invariant $1$-form on $G$) is called the {\bf Koszul form}.
\begin{proof}

 We denote by $X',Y', Z' = [X',Y']$ right-invariant vector fields on $G$ associated with elements
 $X,Y,Z = [X,Y]$ of $\ggg$. These vector fields project to vector fields on $M = G/H$ which are generators
 of 1-parameter subgroups of $G$ acting on $M$; in particular, they preserve the para-complex structure and the
 volume form.
Applying \eqref{quattro} of Lemma \ref{rho} and Lemma \ref{koszul} we get
\begin{eqnarray*}
2\pi^*\rho(X,Y)&=& 2\rho (\pi_*X',\pi_*Y')_e =\diver (K\pi_*[X',Y'])_e \\
&=& \diver(\pi_*\tilde{K}Z')_e = \sum_i\omega^i\left([X_i,\tilde{K}Z'] \right)_e \,.
\end{eqnarray*}

Since left-invariant vector fields $X_i$ commute with right-invariant vector field $Z'$,we can write
\begin{eqnarray*}
[X_i,\tilde{K}Z']_e &=& (\mathcal L_{X_i}\tilde K )Z'_e =({\mathcal
L}_{X_i}\tilde K )Z|_e \\
& =&{\mathcal L}_{X_i}(\tilde K Z)|_e -\tilde K ({\mathcal L}_{X_i}Z)|_e\\
&=&[X_i, \tilde K Z] - \tilde K ([X_i, Z])\,.
\end{eqnarray*}
 Here we consider $Z$ and $\tilde K Z$ as left-invariant vector fields on $G$.
 Substituting this formula into the previous one, we obtain the statement.
\end{proof}

\subsection{Invariant para-complex structures on a homogeneous manifold.}
 Let $M=G/H$ be a homogeneous manifold and $\mathfrak{h}, \mathfrak{g}$ be the
 Lie algebras of $H,G$, respectively.
 \begin{prop} There is a natural 1-1 correspondence between invariant para-complex
 structures $K$ on $M$ with eigenspaces decomposition $TM = T^+ \oplus T^-$ and decompositions
 $\mathfrak{g}= \mathfrak{g}^+ + \mathfrak{g}^-$ into two ${\mathrm Ad}_H$-invariant
 subalgebras $\mathfrak{g}^\pm$ with $\mathfrak{g}^+ \cap \mathfrak{g}^- = \mathfrak{h}$
 and $\dim{\mathfrak{g}^+}/ \mathfrak{h} = \dim{\mathfrak{g}^-}/ \mathfrak{h}$.
 \end{prop}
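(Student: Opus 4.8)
The plan is to work at the base point $o = eH$, identifying $T_oM$ with $\ggg/\gh$ via $d\pi_e$. By the standard dictionary for invariant tensor fields on a homogeneous space, an invariant field $K$ of endomorphisms of $TM$ is the same datum as an $\Ad_H$-equivariant endomorphism $K_o$ of the $H$-module $\ggg/\gh$, and $K$ is an invariant almost para-complex structure exactly when $K_o^2=\Id$ and the two eigenspaces $(\ggg/\gh)^{\pm}$ of $K_o$ have equal dimension $n$. Given such a $K_o$, put $\ggg^{\pm}:=(d\pi_e)^{-1}\big((\ggg/\gh)^{\pm}\big)$, the full preimages under $\ggg\to\ggg/\gh$. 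These are $\Ad_H$-invariant subspaces containing $\gh$ (the eigenspaces of $K_o$ are $\Ad_H$-submodules since $K_o$ is equivariant, and $\gh$ is $\Ad_H$-invariant), and from $(\ggg/\gh)^+\oplus(\ggg/\gh)^-=\ggg/\gh$ one reads off $\ggg^+\cap\ggg^-=\gh$, $\ggg^++\ggg^-=\ggg$ and $\dim\ggg^+/\gh=\dim\ggg^-/\gh=n$. Conversely a pair $\ggg^{\pm}$ as in the statement gives $\ggg/\gh=\ggg^+/\gh\oplus\ggg^-/\gh$, and one defines $K_o$ to be $+\Id$ on the first summand and $-\Id$ on the second; $\Ad_H$-invariance of the summands makes $K_o$ equivariant, and the two assignments are visibly mutually inverse. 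This is the required bijection at the level of \emph{almost} para-complex structures, and it is natural in the sense that $T^{\pm}M$ corresponds to the invariant distribution with value $\ggg^{\pm}/\gh$ at $o$.

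It then remains to match integrability of $K$ with the subalgebra property. Since $K$ is a para-complex structure precisely when both eigendistributions $T^+M$ and $T^-M$ are involutive, it suffices to prove the following lemma: an invariant distribution $D$ on $G/H$ corresponding to an $\Ad_H$-invariant subspace $\gd$ with $\gh\subseteq\gd\subseteq\ggg$ is involutive if and only if $\gd$ is a subalgebra. To prove it I would pull $D$ back along $\pi : G\to G/H$ to $\tilde D:=(d\pi)^{-1}(D)$, the left-invariant distribution on $G$ with $\tilde D_e=\gd$ (it contains the vertical distribution $\ker d\pi$ since $\gh\subseteq\gd$). In a local trivialization of the submersion $\pi$, $\tilde D$ becomes a product distribution of the form $D\times(\text{fibre directions})$, so $\tilde D$ is involutive if and only if $D$ is; while $\tilde D$, being left-invariant, is spanned by the left-invariant vector fields $X^{L}$, $X\in\gd$, and $[X^{L},Y^{L}]=[X,Y]^{L}$, so by the Frobenius theorem $\tilde D$ is involutive if and only if $[\gd,\gd]\subseteq\gd$. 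Combining these and taking $\gd=\ggg^{\pm}$ finishes the argument.

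The step I expect to need the most care is the passage ``$\tilde D$ involutive $\Longleftrightarrow$ $D$ involutive''. The key observation is that $\tilde D=(d\pi)^{-1}(D)$ really does contain all fibre directions, so that in a local product chart $\pi^{-1}(U)\simeq U\times H$ (where $\pi$ is the projection) it splits as $D|_U\oplus TH$; the claim then reduces to the elementary fact that a product distribution $D\times TF$ is involutive exactly when $D$ is. Everything else — the invariant-tensor dictionary, the $\Ad_H$-invariance bookkeeping (whose infinitesimal form $[\gh,\gd]\subseteq\gd$ guarantees that $D$, hence $\tilde D$, is well defined), the Frobenius step on $G$, and the dimension count identifying the three conditions on $\ggg^{\pm}$ with ``the eigenspaces of $K_o$ are complementary and of equal size'' — is routine.
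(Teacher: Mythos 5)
Your proof is correct and follows essentially the same route as the paper, which disposes of the proposition in a single sentence (the subalgebras define complementary $\Ad_H$-invariant subspaces of $\ggg/\gh=T_oM$ that extend to complementary invariant integrable distributions). You merely supply the details the paper leaves implicit — in particular both directions of the equivalence between involutivity of an invariant distribution and the subalgebra property of the corresponding $\Ad_H$-invariant subspace containing $\gh$, proved correctly via the pullback $(d\pi)^{-1}(D)$ and Frobenius on left-invariant vector fields.
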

 \begin{proof} Indeed, such a decomposition defines two complementary ${\mathrm Ad}_H$-invariant subspaces
 $\mathfrak{g}^\pm/\mathfrak{h} \subset \mathfrak{g}/\mathfrak{h} = T_oM$ which can be extended to
 complementary invariant integrable distributions $T^\pm$.
 \end{proof}

 An invariant para-complex structure $K$ associated with a graded Lie algebra can be constructed as follows.
 Let
 \begin{equation}\label{gradation}
\ggg = \ggg_{-k} +\cdots + \ggg_{-1}+\ggg_{0}+\ggg_{1}+\cdots + \ggg_{k}\,,\,\,\,[\ggg_i,\ggg_j]\subset\ggg_{i+j}
\end{equation}
be a $\mathbb{Z}$-graded Lie algebra. Then the endomorphism $D $ defined by $D|_{\mathfrak{g}_j} = j \Id$
is a derivation of $\mathfrak{g}$. Extending the Lie algebra $\mathfrak{g}$ to $\mathbb{R}D + \mathfrak{g}$ if necessary, we may assume
that the derivation $D$ is inner, i.e. there is an element $d \in \mathfrak{g}_0$, called {\bf grading element}, such that
$D = \ad_d$. Let $G$ be a connected Lie group with Lie algebra $\mathfrak{g}$ and $H$ be the (automatically closed)
subgroup generated by $\mathfrak{h}:= \mathfrak{g}_0$. Then the decomposition
$$
\mathfrak{g} = \mathfrak{g}^+ + \mathfrak{g}^-\,, \quad \mathfrak{g}^\pm = \mathfrak{g}_0 + \sum_{j>0} \mathfrak{g}_{\pm j}
$$
 defines an invariant para-complex structure $K$ on the (reductive) homogeneous manifold $M = G/H$. The homogeneous para-complex
 manifold $(M = G/H, K)$ and the para-complex structure $K$ are called the {\bf para-complex manifold and the para-complex
 structure associated with a gradation}.

 Note that the grading element $d$ generates an Anosov flow
 $ \varphi_t:= {\exp}(td) $ on $M = G/H$ with smooth stable distribution $T^-$ and unstable distribution
 $T^+$ which preserves the canonical invariant connection on $M$ associated with the reductive decomposition
 $\mathfrak{g} = \mathfrak{h}+ \mathfrak{m}: = \mathfrak{g}_0 + (\sum_{j \neq 0}\mathfrak{g}_j )$.
 The result by Benoist and Labourie \cite{BL}, who describes Anosov diffeomorphisms on a compact manifold
 with smooth stable and unstable distributions which preserve a linear connection (or symplectic form), shows that
 there is no compact smooth quotient of $M$ which preserves the Anosov diffeomorphism $\varphi_1$, that is a cocompact
 freely acting discrete group $\Gamma$ of diffeomorphisms of $M$ which commutes with the Anosov diffeomorphism $\varphi_1$.
 Moreover, the following deep result \cite{BL1} shows that there is no compact manifold modelled on the homogeneous manifold
 $M=G/H$ of a semisimple Lie group $G$ associated with a graded Lie algebra. Recall that a manifold $N$ is modeled  on a homogeneous
 manifold $G/H$ if there is an atlas of $G/H$-valued local charts whose transition functions are restrictions of elements of $G$.

 \begin{thm} \cite{BL1} Let $G/H$ be a homogeneous manifold of a connected semisimple Lie group $G$ which admits an
 invariant volume form. If there is a compact manifold $N$ modelled on $G/H$, then $H$ does not contain any hyperbolic element.
 In particular, if $G/H$ admits a smooth compact quotient $\Gamma \setminus G/H$, then the center of $H$ is compact.
 \end{thm}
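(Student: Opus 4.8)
The plan is to recast ``$N$ is modelled on $G/H$'' as a flat geometric structure and then extract a measure-theoretic obstruction from a hyperbolic element. First I would fix a developing map $\delta\colon\tilde N\to G/H$ and a holonomy homomorphism $\rho\colon\pi_1(N)\to G$, and form the flat principal $G$-bundle $E=\tilde N\times_\rho G\to N$, equipped with its flat foliation $\mathcal F$ (the images of the slices $\tilde N\times\{g\}$) transverse to the fibres $\cong G$. The developing data is exactly a reduction of the structure group to $H$, i.e.\ a principal $H$-subbundle $P\subset E$, equivalently a flat Cartan geometry of type $(G,H)$ on $N$. Through the isotropy representation $\iota(h)=\Ad(h)|_{\gm}$ one then has the canonical identification $TN\cong P\times_H\gm$ of the tangent bundle, so that the effective ``structure group'' of $TN$ is $\iota(H)\subset\GL(\gm)$.

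Next I would record the volume. Since $G$ is semisimple it is unimodular, so $G/H$ carries a $G$-invariant volume form precisely when $H$ is unimodular, i.e.\ when $|\det\iota(h)|=1$ for all $h\in H$; this invariant volume descends, via the identification above, to a volume density on $N$ of \emph{finite} total mass because $N$ is compact. Simultaneously the bi-invariant Haar measure of $G$ is a right-$G$-invariant transverse invariant measure for the foliation $\mathcal F$ on $E$. These finiteness and invariance facts are the fuel for the obstruction.

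The heart of the argument is to turn a hyperbolic $h_0=\exp X_0\in H$ into a contradiction. Hyperbolicity means $\iota(h_0)=\Ad(h_0)|_{\gm}$ is $\R$-diagonalizable with some eigenvalues of modulus $\neq1$; because $\det\iota(h_0)=\pm1$ there must be both expanding and contracting directions. I would consider the right action of the one-parameter group $a_t=\exp(tX_0)$ on $E$, which preserves $\mathcal F$ and the transverse Haar measure, and study how it moves the $H$-reduction $P$ relative to the flat leaves. The goal is to manufacture from $(E,\mathcal F,P,a_t)$ a dynamical system on a finite-measure space carrying the cocycle $\gamma\mapsto\rho(\gamma)$ over the $\pi_1(N)$-action, and then apply the multiplicative ergodic (Oseledets) theorem: the nonzero real eigenvalues of $\Ad(h_0)$ would force a nonvanishing Lyapunov exponent, whereas recurrence of a finite invariant measure forces every exponent to vanish, a contradiction. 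I expect the genuinely hard step---and the deep content of \cite{BL1}---to be exactly the construction of this finite invariant measure: the fibres $G$ (and $H$) are noncompact, so the naive fibre volume is infinite, and one must use compactness of $N$ together with the transverse invariant measure and the transversality of $P$ to the flat foliation to descend to a finite-mass recurrent system before Oseledets can be invoked.

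Finally, the ``in particular'' clause follows by specialization. A cocompact quotient $\Gamma\backslash G/H$ (with $\Gamma\subset G$ discrete acting properly discontinuously) is a compact manifold modelled on $G/H$ with $\delta=\mathrm{id}$ and holonomy $\Gamma$, so the main statement applies and $H$ has no hyperbolic element. If the center $Z(H)$ were noncompact, I would take a central element escaping every compact set and pass to the hyperbolic part of its Jordan decomposition; since the Jordan components of a central element centralize $H$ and are governed by $\Ad$, a nontrivial $\R$-split part would furnish a hyperbolic element of $H$, contradicting the conclusion, while the residual purely unipotent possibility is ruled out using the properness of the $\Gamma$-action (the Calabi--Markus phenomenon). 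Hence $Z(H)$ must be compact.
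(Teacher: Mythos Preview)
The paper does not prove this theorem; it is quoted from Benoist--Labourie \cite{BL1} and used as a black box to conclude that the homogeneous para-complex manifolds under study admit no smooth compact models. There is therefore no proof in the paper against which to compare your proposal.

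As to the proposal itself: your setup (developing map, holonomy, flat principal bundle, Cartan reduction, finite volume on $N$ from unimodularity of $H$) is correct, and locating the obstruction in a tension between nonzero Lyapunov exponents and Poincar\'e recurrence is the right intuition and close in spirit to what Benoist and Labourie actually do. But you explicitly defer the one genuinely hard step---producing a \emph{finite} invariant measure on the relevant bundle over $N$ so that Oseledets and recurrence apply---to \cite{BL1}. What you have written is an informed outline of where the difficulty lies, not a proof.

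Your derivation of the ``in particular'' clause has real gaps. First, the Jordan components of $z\in Z(H)$ are guaranteed to lie in $H$ only when $H$ is Zariski closed, which the theorem as stated does not assume (it happens to hold in the paper's application, where $H$ is a centralizer). Second, even granting that, a noncompact abelian $Z(H)$ could consist entirely of elliptic and unipotent elements---a unipotent one-parameter subgroup already gives this---and your appeal to ``the Calabi--Markus phenomenon'' is too vague to dispose of that case: Calabi--Markus--type results give finiteness of $\Gamma$ under rank hypotheses and do not by themselves exclude noncompact unipotent centers. In the paper's intended application this subtlety is irrelevant, since the grading element $d\in Z(H)$ is itself hyperbolic and the first assertion applies directly.
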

Note that a semisimple group $G$ admits a cocompact discrete subgroup $\Gamma$ such that the quotient $\Gamma \setminus G/H$
is a compact orbifold and it would be interesting to construct such an orbifold with induced Anosov diffeomorphism.
\subsection{Invariant para-K\"ahler structures on a homogeneous reductive manifold.}
 Now we give a Lie algebraic description of invariant para-K\"ahler structures on
 a homogeneous manifold $M = G/H$ with a reductive decomposition
 $ \mathfrak{g} = \mathfrak{h} + \mathfrak{m, \,\, [\mathfrak{h}, \mathfrak{m}] \subset \mathfrak{m}}$.
 We denote by
 $$j : H \rightarrow GL(\mathfrak{m}),\,\, h \mapsto j(h) = \Ad_h|_{\mathfrak{m}}$$
 the isotropy representation of $H$ into $\mathfrak{m} = T_oM$.
Recall that an invariant symplectic structure on $M = G/H$ is defined by a closed
$Ad_H$-invariant $2$-form $\omega$ on $\ggg$ with kernel $\gh$.
The invariant para-complex structure $K$ associated with a decomposition
$$ \mathfrak{g} = \mathfrak{g}^+ + \mathfrak{g}^- = (\mathfrak{h}+ \mathfrak{m}^+ )+ ( \mathfrak{h}+ \mathfrak{m}^-)$$
 is skew-symmetric with respect
to the invariant symplectic form $\omega$ (in other words $\omega$
is of type $(1,1)$) if and only if $\omega\vert_{\gm^\pm}=0$ (see
lemma \ref{isotropic}). This implies
\begin{prop} Invariant para-K\"ahler structures on a reductive homogeneous manifold $M=G/H$ are defined by triples
$(\omega,\gm^+,\gm^-)$, where $\gm = \gm^+ +\gm^-$ is a $j(H)$-invariant decomposition such that $\ggg^\pm= \gh +\gm^\pm$ are
subalgebra of $\ggg$ and $\omega$ is a closed $Ad_H$-invariant $2$-form with kernel $\gh$ such that $\omega\vert_{\gm^\pm}=0$.
\end{prop}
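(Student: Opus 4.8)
The plan is to use the equivalent description (ii) of a para-K\"ahler structure --- a symplectic form together with two complementary integrable Lagrangian distributions --- and to translate each ingredient into Lie-algebraic data by the usual dictionary between $G$-invariant tensor fields on $M=G/H$ and $\Ad_H$-invariant tensors on $\ggg$ that are killed by $\gh$. Everything is then a matter of assembling facts already recorded above.

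Starting from an invariant para-K\"ahler structure $(g,K)$, put $\omega=g\circ K$. By (ii) this is an invariant symplectic form whose associated Lagrangian distributions are the eigendistributions $T^\pm M$ of $K$. An invariant symplectic form is exactly a closed $\Ad_H$-invariant $2$-form $\omega$ on $\ggg$ with $\ker\omega=\gh$ (the Chevalley--Eilenberg cocycle condition gives closedness, and nondegeneracy of the induced form on $\gm=T_oM$ forces the kernel to be precisely $\gh$), as recalled in the text. The invariant distributions $T^\pm M$ correspond to $j(H)$-invariant subspaces $\gm^\pm\subset\gm$ with $\gm=\gm^+\oplus\gm^-$, and by the preceding Proposition on invariant para-complex structures their involutivity is equivalent to $\ggg^\pm:=\gh+\gm^\pm$ being subalgebras of $\ggg$. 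Finally, $T^\pm M$ is Lagrangian iff $\omega$ vanishes on $T^\pm_oM=\gm^\pm$: since $\dim\gm^+=\dim\gm^-=\tfrac12\dim\gm$, isotropy plus half-dimensionality is the Lagrangian condition. This yields the triple $(\omega,\gm^+,\gm^-)$ of the statement. Conversely, from such a triple the subalgebras $\ggg^\pm$ give (again by the preceding Proposition) an invariant para-complex structure $K$ with eigendistributions $T^\pm M$, the form $\omega$ gives an invariant symplectic form, and $\omega\vert_{\gm^\pm}=0$ makes $T^\pm M$ isotropic, hence, being complementary, Lagrangian; by (ii) this is an invariant para-K\"ahler structure. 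The two assignments are manifestly inverse to one another, since all the identifications take place among invariant objects evaluated at $o$.

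I expect the only delicate point is checking that compatibility between $K$ and $\omega$ is captured by exactly the single condition $\omega\vert_{\gm^\pm}=0$ and nothing further --- that is, that $\omega$ being of type $(1,1)$ with respect to $K$ (equivalently, skew-symmetry of $K$ for the metric $g=\omega\circ K$, which in turn makes $g$ automatically nondegenerate since $K$ is invertible) is equivalent to the vanishing of $\omega$ on both eigenspaces $\gm^+$ and $\gm^-$. This is precisely what part (iii) of Lemma \ref{isotropic} supplies, once it is transported from the $\pm e$-eigenspaces $T^{1,0}M,T^{0,1}M$ of the para-complexification to the real eigendistributions $T^\pm M$; the dimension bookkeeping and the two standard invariance equivalences ("invariant symplectic form $\leftrightarrow$ closed invariant $2$-cocycle with kernel $\gh$" and "invariant involutive distribution $\leftrightarrow$ subalgebra containing $\gh$") are then routine.
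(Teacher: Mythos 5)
Your proposal is correct and follows essentially the same route as the paper: the paper likewise reduces the statement to the standard dictionary (invariant symplectic form $\leftrightarrow$ closed $\Ad_H$-invariant $2$-form with kernel $\gh$; invariant para-complex structure $\leftrightarrow$ the subalgebra decomposition of the preceding Proposition) and then observes, citing Lemma \ref{isotropic}, that compatibility of $K$ with $\omega$ is exactly the single condition $\omega\vert_{\gm^\pm}=0$. The only cosmetic difference is that the paper phrases the compatibility as skew-symmetry of $K$ (type $(1,1)$) rather than as the Lagrangian condition on $T^\pm M$, which is the same fact.
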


Note that an invariant volume form on a homogeneous manifold $M=G/H$ exists if and only if the isotropy representation
$j(H)$ is unimodular (i.e. $\det (j(h))=1$ for all $h\in H$), and it is defined up to a constant scaling.
We get
the following
\begin{cor}\label{corhomogparakaehler} Let $(M=G/H,K)$ be a homogeneous para-complex manifold which admits an invariant volume form
$\vol$. Then any invariant para-K\"ahler
structure $(K,\omega)$ has the same Ricci form $\rho$ which is the canonical form of $(K,\vol)$. Moreover, there
exists an invariant para-K\"ahler Einstein structure with non zero scalar curvature if and only if the canonical form
$\rho$ is non degenerate. These structures are given by pairs $(K,\omega)$, with $\omega=\lambda\rho$, $\lambda\neq 0$.
\end{cor}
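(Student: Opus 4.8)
The plan is to derive the Corollary from two facts already established above: the identification of the Ricci form of an invariant para-K\"ahler metric with the canonical form of the underlying $(M,K,\vol)$, and the Lie-algebraic description of invariant para-K\"ahler structures as triples $(\omega,\gm^+,\gm^-)$.

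First I would show that every invariant para-K\"ahler structure $(K,\omega)$ on $M=G/H$ has Ricci form equal to the canonical form of $(M,K,\vol)$. Set $g=\omega\circ K$. Then $\vol^g$ is a $G$-invariant volume form, and since $G$ acts transitively on $M$ the ratio $\vol^g/\vol$ is a nonzero constant $c$, so $\vol^g=c\,\vol$. Writing $\vol$ in local para-holomorphic coordinates as $V\,dz^1\wedge d\overline z^1\wedge\cdots\wedge dz^n\wedge d\overline z^n$, the density of $\vol^g$ is $cV$, and by \eqref{canonicalform}
\[
e\,\partial\overline{\partial}\log\bigl((-e)^n cV\bigr)=e\,\partial\overline{\partial}\bigl(\log|c|+\log((-e)^nV)\bigr)=e\,\partial\overline{\partial}\log\bigl((-e)^nV\bigr),
\]
so the canonical form of $(M,K,\vol^g)$ coincides with that of $(M,K,\vol)$. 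Together with the Corollary identifying the Ricci form $\rho$ of $(M,K,\omega,g)$ with the canonical form of $(M,K,\vol^g)$ (cf. \eqref{ricci} and \eqref{canonicalform}), this proves that $\rho$ is the canonical form of $(M,K,\vol)$, hence independent of $\omega$.

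Next I would settle the Einstein problem. Recall that the canonical form $\rho$ is real, closed and of type $(1,1)$ with respect to $K$ (the Lemma containing \eqref{canonicalform}), and $G$-invariant since $K$ and $\vol$ are. The para-complex structure $K$ corresponds to a decomposition $\ggg=\ggg^+ +\ggg^-=(\gh+\gm^+)+(\gh+\gm^-)$ into $\Ad_H$-invariant subalgebras, and for a real $2$-form "type $(1,1)$" means exactly $\rho|_{\gm^+}=\rho|_{\gm^-}=0$. If $\rho$ is non-degenerate — i.e. its pull-back to $\ggg$ has kernel precisely $\gh$ — then for each $\lambda\neq 0$ the form $\omega:=\lambda\rho$ is a closed $\Ad_H$-invariant $2$-form with kernel $\gh$ vanishing on $\gm^\pm$, so by the Proposition describing invariant para-K\"ahler structures via such triples, $(K,\omega)$ is an invariant para-K\"ahler structure with metric $g_\lambda=\omega\circ K=\lambda\,\rho\circ K$. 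By the first part its Ricci form is $\rho=\lambda^{-1}\omega$; composing the identity $\rho=\Ric\circ K$ with $K$ and using $K^2=\Id$ gives $\Ric=\lambda^{-1}g_\lambda$, so $(g_\lambda,K)$ is para-K\"ahler Einstein with Einstein constant $\lambda^{-1}\neq 0$. Conversely, if $(K,\omega)$ is invariant para-K\"ahler Einstein with constant $\mu\neq 0$, then $\rho=\Ric\circ K=\mu\,g\circ K=\mu\,\omega$ is non-degenerate because $\omega$ is symplectic, and $\omega=\mu^{-1}\rho$. Hence the invariant para-K\"ahler Einstein structures with non-zero scalar curvature are precisely the pairs $(K,\lambda\rho)$, $\lambda\neq 0$, as claimed.

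I do not expect a genuine obstacle; the argument is essentially bookkeeping over the earlier propositions. The two points that need a little care are the passage $\vol^g=c\,\vol$ together with the scale-invariance of the canonical form under $V\mapsto cV$ in the first step, and the dictionary translating "$\rho$ non-degenerate of type $(1,1)$" into the algebraic hypotheses "kernel $\gh$" and "$\omega|_{\gm^\pm}=0$" needed to invoke the structure Proposition.
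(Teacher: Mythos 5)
Your proposal is correct and follows essentially the same route the paper intends: the invariant volume form is unique up to a constant factor, the canonical form $e\,\partial\overline{\partial}\log((-e)^nV)$ is unchanged under $V\mapsto cV$, and the Ricci form of any invariant para-K\"ahler metric equals the canonical form of $(K,\vol^g)$, whence the Einstein condition reduces to $\omega=\lambda\rho$ with $\rho$ non-degenerate. Your explicit verification that $\lambda\rho$ satisfies the hypotheses of the structure proposition (closed, $\Ad_H$-invariant, kernel $\gh$, vanishing on $\gm^{\pm}$) is exactly the bookkeeping the paper leaves implicit.
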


\section{Homogeneous para-K\"ahler Einstein manifolds of a semisimple group.}
The aim of this section is to describe invariant para-K\"ahler
Einstein structures on homogeneous manifolds $M=G/H$ of semisimple
groups $G$.

 We need the following important result.
 \begin{thm}[\cite{HDKN}] A homogeneous manifold $M= G/H$ of a semisimple Lie group $G$ admits
 an invariant para-K\"ahler structure $(K, \omega)$ if and only if it is a covering of a semisimple
 adjoint orbit $\Ad_G h = G/ Z_G(h) $ that is the adjoint orbit of a semisimple element $h \in \ggg$.
 \end{thm}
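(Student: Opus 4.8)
I would prove the two implications separately. \textbf{``If'' direction.} Let $M$ cover $\Ad_G h=G/Z_G(h)$ for a semisimple $h\in\ggg$; one should note that for $M$ to admit an invariant para-complex structure at all $\ad_h$ must be diagonalizable over $\R$, so ``semisimple'' is read here in this hyperbolic sense (and $h$ may in fact be taken to be the grading element of a $\Z$-gradation of $\ggg$ with $\ggg_0=\gh$). First I would form the eigenspace decomposition $\ggg=\bigoplus_\lambda\ggg_\lambda$ of $\ad_h$; since $[\ggg_\lambda,\ggg_\mu]\subset\ggg_{\lambda+\mu}$ and $\ggg_0=\gz_\ggg(h)=\gh$, the subspaces $\ggg^\pm:=\gh+\sum_{\pm\lambda>0}\ggg_\lambda$ are $\Ad_H$-invariant subalgebras with $\ggg^+\cap\ggg^-=\gh$ and $\dim\ggg^+/\gh=\dim\ggg^-/\gh$ (the Killing form $B$ of $\ggg$ pairs $\ggg_\lambda$ with $\ggg_{-\lambda}$), so by the proposition of \S6 describing invariant para-complex structures they define an invariant para-complex structure $K$ on $G/H$. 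For $\omega$ I would take the Kirillov--Kostant--Souriau form of the orbit, i.e. the $\Ad_H$-invariant $2$-form on $\ggg$ with $\omega(X,Y)=B(h,[X,Y])$: it is closed (equal to $-d\psi$ with $\psi=B(h,\cdot)\in\ggg^*$), has kernel exactly $\gz_\ggg(h)=\gh$, and vanishes on each $\ggg^\pm$ --- on $[\ggg^\pm,\ggg^\pm]\subset\sum_{\pm\lambda>0}\ggg_\lambda$ since $B(h,\ggg_j)=0$ for $j\ne0$, and on $\gh$ since $h$ is central in $\gh=\gz_\ggg(h)$. By the proposition of \S6 describing invariant para-K\"ahler structures, $(\omega,\gm^+,\gm^-)$ with $\gm^\pm:=\sum_{\pm\lambda>0}\ggg_\lambda$ is an invariant para-K\"ahler structure on $G/H$; being built purely from the Lie-algebra data it pulls back along any covering of $G/H$.

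\textbf{``Only if'' direction.} Assume $G/H$ carries an invariant para-K\"ahler structure. By the proposition of \S6 this is the same as a decomposition $\ggg=\ggg^++\ggg^-$ into subalgebras with $\ggg^+\cap\ggg^-=\gh$, $\dim\ggg^\pm/\gh=n$, together with a closed $\Ad_H$-invariant $2$-form $\omega$ with $\ker\omega=\gh$ and $\omega|_{\ggg^\pm}=0$. The first step is cohomological: since $\ggg$ is semisimple, $H^2(\ggg;\R)=0$ (second Whitehead lemma), so $\omega=d\psi$ for some $\psi\in\ggg^*$; non-degeneracy of the Killing form $B$ produces a unique $h\in\ggg$ with $\psi=B(h,\cdot)$, whence $\omega(X,Y)=-B(h,[X,Y])=-B([h,X],Y)$ and $\ker\omega=\gz_\ggg(h)$, so $\gh=\gz_\ggg(h)$. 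It then suffices to show $h$ is semisimple: if not, its nilpotent Jordan component $n$ is a non-zero nilpotent element commuting with all of $\gz_\ggg(h)$, hence a non-zero nilpotent element in the centre of $\gh$, which is impossible once $\gh$ is known to be reductive. So everything reduces to showing $\gh=\ggg^+\cap\ggg^-$ is reductive.

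For this I would first show $\ggg^+$ and $\ggg^-$ are \emph{parabolic}. Since $h\in\gh\subset\ggg^+$ and $\ggg^+$ is a subalgebra, $\ad_h(\ggg^+)\subset\ggg^+$; since $\omega|_{\ggg^+}=0$ also $\ad_h(\ggg^+)\subset(\ggg^+)^{\perp}$ ($B$-orthogonal), and $\ker(\ad_h|_{\ggg^+})=\gh$ forces $\dim\ad_h(\ggg^+)=n$, so $\dim\bigl(\ggg^+\cap(\ggg^+)^{\perp}\bigr)\ge n$; symmetrically for $\ggg^-$. As $(\ggg^+)^{\perp}\cap(\ggg^-)^{\perp}=(\ggg^++\ggg^-)^{\perp}=0$ while $\dim(\ggg^+)^{\perp}+\dim(\ggg^-)^{\perp}=2n$, both inequalities must be equalities, so $(\ggg^\pm)^{\perp}=\ggg^\pm\cap(\ggg^\pm)^{\perp}\subset\ggg^\pm$; and a subalgebra of a semisimple Lie algebra containing its own Killing-orthogonal is parabolic, with that orthogonal as nilradical. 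Thus $\ggg^\pm$ are parabolic with nilradicals $\gn^\pm:=(\ggg^\pm)^{\perp}$ of dimension $n$, and $\gn^+\cap\gn^-=0$. The remaining step --- which I expect to be the real obstacle --- is to conclude from $\ggg^++\ggg^-=\ggg$ together with the Lagrangian condition $\omega|_{\ggg^\pm}=0$ (it is precisely this, forcing $\dim\ggg/\gh$ to be even, that excludes pathological pairs of parabolics summing to $\ggg$) that $\ggg^+$ and $\ggg^-$ are \emph{opposite} parabolics, so that $\gh$ is their common Levi subalgebra and in particular reductive. I would try this by proving $\gn^+\cap\ggg^-=0=\gn^-\cap\ggg^+$: for $X\in\gn^+\cap\ggg^-$ one has $\ad_h X\in\gn^+$ (as $\gn^+$ is an ideal of $\ggg^+\ni h$) and $\ad_h X\in\ad_h(\ggg^-)=\gn^-$ (the equality by the dimension count above), so $\ad_h X\in\gn^+\cap\gn^-=0$ and $X\in\gz_\ggg(h)=\gh$, which reduces the claim to excluding non-zero nilpotent elements of $\gh$ lying in $\gn^+$. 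Once $\gh$ is reductive, $h$ is semisimple, $\mathrm{Lie}(Z_G(h))=\gz_\ggg(h)=\gh=\mathrm{Lie}(H)$, and $G/H$ is a covering of the adjoint orbit $\Ad_G h=G/Z_G(h)$ of the semisimple element $h$, as required.
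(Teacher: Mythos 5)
First, a point of comparison: the paper does not prove this theorem at all — it is quoted from Hou--Deng--Kaneyuki--Nishiyama \cite{HDKN} as a known result — so your attempt can only be measured against the literature and against the constructions the paper does carry out later. Your ``if'' direction is correct and is in substance exactly what the paper assembles in Sections 5--6: the $\ad_h$-eigenspace decomposition is the fundamental gradation defining $K$, and your $2$-form $B(h,[\cdot,\cdot])$ is the paper's $\omega_z$ with $z=h$, which is symplectic precisely because $h$ is $\gh$-regular. Your caveat that ``semisimple'' must be read as hyperbolic ($\ad_h$ diagonalizable over $\R$) is genuine and correct: for an elliptic element such as a generator of $\so(2)\subset\sli_2(\R)$ the isotropy representation admits no invariant Lagrangian splitting, so the literal ``if'' statement fails, and \cite{HDKN} indeed formulate the result for hyperbolic orbits. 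The verification that $\omega$ vanishes on $\ggg^{\pm}$ should separate the $\gh$-component (killed because $[h,\gh]=0$) from the positive-weight component (killed because $B(h,\ggg_{\lambda})=0$ for $\lambda\neq 0$); your parenthetical conflates these slightly, but the argument is sound.

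The ``only if'' direction has a real gap, which you flag yourself: after correctly deriving $\omega=dB(h,\cdot)$ from Whitehead's lemma, $\gh=Z_{\ggg}(h)$, and the parabolicity of $\ggg^{\pm}$ from $(\ggg^{\pm})^{\perp}=\ad_h(\ggg^{\pm})\subset\ggg^{\pm}$ (a correct classical characterization, though it deserves a citation, e.g.\ Bourbaki LIE VIII or Tauvel--Yu), you reduce everything to showing $\gh\cap\gn^{+}=0$, i.e.\ that the parabolics are opposite — and there you stop. Without this step nothing is proved, and the step is not vacuous: two parabolics with $\gp+\gq=\ggg$ and $\dim(\gp\cap\gq)=\dim\ggg-\dim\gn_{\gp}-\dim\gn_{\gq}$ need not be opposite (block-triangular examples in $\sli_3$ show this), so one must genuinely use that $\gp\cap\gq$ is a centralizer. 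The quickest repair avoids oppositeness altogether: complexify, and invoke the standard fact (Bourbaki, LIE VIII, \S 3, no.\ 4) that any two parabolic subalgebras of a semisimple Lie algebra contain a common Cartan subalgebra. Hence $\gh^{\C}=\ggg^{+\C}\cap\ggg^{-\C}$ contains a Cartan subalgebra $\gt$ of $\ggg^{\C}$; since $h$ lies in the centre of $Z_{\ggg}(h)=\gh$ it centralizes $\gt$, so $h\in Z_{\ggg^{\C}}(\gt)=\gt$, and every element of a Cartan subalgebra is semisimple. This closes your argument (and oppositeness of $\ggg^{\pm}$ then follows a posteriori). As written, however, the proof of the ``only if'' half is incomplete at its decisive point.
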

 Note that in this case $Z^0_G(h) \subset H \subset Z_G(h)$, where $Z^0_G(h)$ denote the connected centralizer of $h$ in $G$ and the element
 $h$ is $\gh$-regular, i.e. its centralizer in $\ggg$ is
 $Z_{\ggg}(h) = \gh$.

We will describe invariant para-complex structures $K$ on such a homogeneous manifold $M = G/H$
in term of Satake diagrams and invariant symplectic structures $\omega$. Then we describe
the canonical form $\rho=\rho_K$ on $(G/H,K)$ in term of roots and show that it is non degenerate. This implies
that for any invariant para-complex structure $K$ there exists a unique para-K\"ahler Einstein structure $(K,\lambda\rho_K)$ with
given non-zero scalar curvature.

\subsection{Invariant para-K\"ahler structures on a homogeneous manifold.}
Let $M=G/H$ be a covering of an adjoint orbit $\Ad_G h $ of a
real semisimple Lie group $G$ that is $Z^{0}_G(h) \subset H
\subset Z_G(h)$. Since the Killing form $B\vert_\gh$ is
non-degenerate, the $B$-orthogonal complement $\gm =\gh^\bot$ of
$\gh$ defines a reductive decomposition
$$
\ggg =\gh +\gm\,.
$$
We recall that a gradation
\begin{equation}\label{gradation}
\ggg = \ggg_{-k} +\cdots + \ggg_{-1}+\ggg_{0}+\ggg_{1}+\cdots + \ggg_{k}\,,\,\,\,[\ggg_i,\ggg_j]\subset\ggg_{i+j}
\end{equation}
of a semisimple Lie algebra $\ggg$ is defined by a grading element $d \in \mathfrak{g}_0$.
 A gradation is called {\bf fundamental}, if the subalgebras
\begin{equation}\label{mpm}
\gm_\pm =\sum_{i>0}\ggg_i\,.
\end{equation}
are generated by $\ggg_{\pm 1}$, respectively.
\begin{lemma} A gradation
is $\Ad_H$-invariant, where $H \subset G$ is a subgroup of $G$ with the Lie algebra $\gh = \ggg_0$ if and only if $\Ad_H$ preserves $d$.
\end{lemma}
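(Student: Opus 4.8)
The statement is an equivalence, and both directions reduce to the same elementary identity for the adjoint representation together with the fact that $\ggg$, being semisimple, is centerless. I would present the two implications in turn.

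\emph{If $\Ad_H$ fixes $d$, the gradation is $\Ad_H$-invariant.} Here I use that the gradation is recovered from $d$ as the eigenspace decomposition of $\ad_d$, i.e. $\ggg_i=\{X\in\ggg : [d,X]=iX\}$, so it suffices to check that each eigenspace is $\Ad_h$-stable. For $X\in\ggg_i$ and $h\in H$, using that $\Ad_h$ is a Lie algebra automorphism and that $\Ad_h d=d$,
\[
[d,\Ad_h X]=[\Ad_h d,\Ad_h X]=\Ad_h[d,X]=i\,\Ad_h X,
\]
so $\Ad_h X\in\ggg_i$; hence $\Ad_h\ggg_i\subseteq\ggg_i$, and running the same argument for $h^{-1}$ gives $\Ad_h\ggg_i=\ggg_i$ for all $i$. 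Thus the whole gradation is preserved.

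\emph{Conversely, if the gradation is $\Ad_H$-invariant, then $\Ad_H$ fixes $d$.} Suppose $\Ad_h\ggg_i=\ggg_i$ for all $i$ and all $h\in H$; in particular $\Ad_h$ preserves $\ggg_0=\gh$, so $d':=\Ad_h d$ again lies in $\ggg_0$. I would first verify that $d'$ is again a grading element of the \emph{same} gradation: for $X\in\ggg_i$ one has $\Ad_{h^{-1}}X\in\ggg_i$ by hypothesis, whence
\[
[d',X]=\Ad_h\bigl[d,\Ad_{h^{-1}}X\bigr]=\Ad_h\bigl(i\,\Ad_{h^{-1}}X\bigr)=iX,
\]
so $\ad_{d'}$ acts as multiplication by $i$ on $\ggg_i$ for every $i$. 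Consequently $\ad_{d-d'}$ annihilates every $\ggg_i$, hence all of $\ggg$, so $d-d'$ lies in the center of $\ggg$; since $\ggg$ is semisimple this forces $d'=d$, i.e. $\Ad_h d=d$. This is the sole point at which semisimplicity of $\ggg$ (a standing assumption of this section) enters — without it $d$ would only be determined modulo the center and the equivalence would break.

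I do not expect a genuine obstacle. The one thing that needs care is the logical direction in the converse: one must manufacture the second grading element $\Ad_h d$ out of the invariance hypothesis and then invoke centerlessness, rather than trying to argue about $d$ directly — but this is a short, standard manipulation. It is also worth recording explicitly that $\Ad_H$ automatically preserves $\gh=\ggg_0$, because conjugation by any $h\in H$ preserves $H$ and hence its Lie algebra, so no connectedness hypothesis on $H$ is required.
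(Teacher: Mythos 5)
Your proof is correct. The paper states this lemma without any proof, so there is nothing to compare it against line by line; your argument is the standard one that the authors evidently had in mind: the forward direction because the $\ggg_i$ are the eigenspaces of $\ad_d$ and any automorphism fixing $d$ permutes (hence preserves) those eigenspaces, and the converse because $\Ad_h d$ is again a grading element for the same gradation, so $d-\Ad_h d$ centralizes all of $\ggg$ and must vanish by semisimplicity. You are right to flag that the converse is exactly where semisimplicity (triviality of the center) is used — in the more general setting of Section 5, where $d$ may only exist after extending $\ggg$ by the grading derivation, the grading element is determined only modulo the center and the "only if" direction would fail; in Section 6, where the lemma sits, $\ggg$ is semisimple, so your argument goes through without further hypotheses.
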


The following proposition describes all invariant para-Complex structures $K$ on a homogeneous manifold $M = G/H$
of a semisimple Lie group $G$ which is a cover of an adjoint orbit of a semisimple element $h \in \mathfrak{g}$.

\begin{prop}[\cite{AM}] There is a natural $1-1$ correspondence between invariant para-complex structures $K$ on a homogeneous
manifold $G/H$ of a semisimple Lie group $G$ which is a covering of a semisimple adjoint orbit
 $M=Ad_G(h)$ and
$\Ad_H$-invariant fundamental gradations \eqref{gradation} of the
Lie algebra $\ggg$ with $\ggg_0 =\gh$. The gradation
\eqref{gradation} defines the para-complex structure $K$ with eigenspace decomposition $TM = T^+ + T^-$
where invariant integrable distributions $T^\pm$ are invariant extensions of the subspaces
 $\gm_\pm$ defined by \eqref{mpm}.
\end{prop}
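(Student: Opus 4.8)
The plan is to reduce the statement, via the correspondence established earlier (invariant para-complex structures on $G/H$ $\leftrightarrow$ $\Ad_H$-invariant decompositions $\ggg=\ggg^{+}+\ggg^{-}$ into subalgebras with $\ggg^{+}\cap\ggg^{-}=\gh$ and $\dim\ggg^{+}/\gh=\dim\ggg^{-}/\gh$), to a purely Lie-algebraic bijection: such decompositions correspond to $\Ad_H$-invariant fundamental gradations \eqref{gradation} with $\ggg_{0}=\gh$. One direction I expect to be routine: given a fundamental gradation, set $\ggg^{\pm}:=\gh+\gm_{\pm}$ with $\gm_{\pm}$ as in \eqref{mpm}; the relations $[\ggg_i,\ggg_j]\subset\ggg_{i+j}$ make these subalgebras meeting exactly in $\ggg_0=\gh$, the Killing form pairs $\ggg_i$ non-degenerately with $\ggg_{-i}$ (it kills $\ggg_i\times\ggg_j$ unless $i+j=0$), whence $\dim\gm_{+}=\dim\gm_{-}$, and $\Ad_H$-invariance of the gradation --- equivalently, by the preceding lemma, $\Ad_H$ fixing the grading element --- gives $\Ad_H$-invariance of $\ggg^{\pm}$. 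This assignment is injective because a fundamental gradation is recovered from $\gm_{+}$ together with its lower central series.

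For the converse I would start from a decomposition $\ggg=\ggg^{+}+\ggg^{-}$ and put $\gm^{\pm}:=\gm\cap\ggg^{\pm}$ (legitimate since $\gh\subset\ggg^{\pm}$ and $\gm=\gh^{\perp}$ with $B|_{\gh}$ non-degenerate), so that $\gm=\gm^{+}\oplus\gm^{-}$. Complexifying and using that $\gh=Z_{\ggg}(h)$ with $h$ semisimple --- so $\gh^{\C}$ contains a Cartan subalgebra $\gt$ of $\ggg^{\C}$ --- the subalgebras $\ggg^{\pm}_{\C}$ become regular: $\ggg^{\pm}_{\C}=\gt\oplus\bigoplus_{\alpha\in P^{\pm}}\ggg_{\alpha}$ for closed subsets $P^{\pm}\supset R_{h}:=\{\alpha:\ggg_{\alpha}\subset\gh^{\C}\}$ of the root system $R$, with $P^{+}\cup P^{-}=R$ and $P^{+}\cap P^{-}=R_{h}$. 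The hard part will be to show that $P^{+}$ and $P^{-}$ are \emph{parabolic} subsets, i.e. $P^{\pm}\cap(-P^{\pm})=R_{h}$ --- equivalently, that $\gm^{\pm}$ are $B$-isotropic. This is exactly where the hypothesis $\dim\gm^{+}=\dim\gm^{-}$ enters: it must be used to exclude that the symmetric part $P^{+}\cap(-P^{+})$ --- the root system of the reductive part of $\ggg^{+}_{\C}$ --- strictly contains $R_{h}$, and I would run a counting argument comparing $P^{+}\setminus R_{h}$, $P^{-}\setminus R_{h}$ and their negatives inside $R\setminus R_{h}$, together with the $\gh$-regularity of $h$. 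Granting parabolicity, $-P^{+}\subset R\setminus R_{h}$ is disjoint from $P^{+}\setminus R_{h}$ and, by the dimension condition, has the cardinality of $P^{-}\setminus R_{h}$, which forces $P^{-}=-P^{+}$; hence $\ggg^{+}$ and $\ggg^{-}$ are a pair of opposite parabolic subalgebras of $\ggg$ with common Levi factor $\gh$, and $\gm^{\pm}$ are their nilradicals.

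It then remains to attach a fundamental gradation to such an opposite pair, which is standard parabolic structure theory. The centre $\gz(\gh)$ of the Levi contains a unique element $d$ acting as the identity on $\gm^{+}/[\gm^{+},\gm^{+}]$; since $[d,\gh]=0$ and $d$ is regular in $\gz(\gh)$ one gets $Z_{\ggg}(d)=\gh$, so $D=\ad_d$ has integer eigenvalues and defines a gradation \eqref{gradation} with $\ggg_{0}=\gh$ and $\sum_{j>0}\ggg_{\pm j}=\gm^{\pm}$; it is fundamental precisely because $d$ was chosen so that $\ggg_{1}$ generates $\gm^{+}$. As $\Ad_H$ preserves $\gh$ and $\gm^{\pm}$ it preserves $[\gm^{+},\gm^{+}]$, hence fixes $d$, so the gradation is $\Ad_H$-invariant. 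Finally these two constructions are mutually inverse --- the grading element produced from the parabolic pair of a given fundamental gradation is the original $d$, again because ``fundamental'' means $\ggg_{1}$ generates $\gm_{+}$ --- which yields the asserted bijection; the description of the eigendistributions $T^{\pm}$ as the invariant extensions of $\gm_{\pm}$ is then immediate from the correspondence invoked at the outset.
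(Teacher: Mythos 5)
The paper gives no proof of this proposition at all --- it is imported from \cite{AM} --- so there is nothing here to compare your argument with line by line; but your overall architecture (reduce to the Lie--algebraic correspondence of the preceding subsection, dispatch the direction ``gradation $\Rightarrow$ decomposition'' with Killing-form bookkeeping, and attack the converse by complexifying and writing $\ggg^{\pm}_{\C}=\gt\oplus\bigoplus_{\alpha\in P^{\pm}}\ggg_{\alpha}$ for closed root subsets $P^{\pm}$) is the natural one and is essentially the strategy of \cite{AM}. Two small remarks on the easy direction: to recover the grading subspaces $\ggg_k$, and not merely the filtration $\sum_{i\geq k}\ggg_i$, you need the lower central series of \emph{both} $\gm_{+}$ and $\gm_{-}$ (paired via the Killing form, $\ggg_k=\bigl(\sum_{i\geq k}\ggg_i\bigr)\cap\bigl(\sum_{i\geq k+1}\ggg_{-i}\bigr)^{\perp_B}$); this is harmless since the decomposition hands you both nilpotent pieces.

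The genuine gap sits exactly where you flag ``the hard part'': parabolicity of $P^{\pm}$. You do not carry out the counting argument, and no argument using only the data you list --- closedness of $P^{\pm}$, $P^{+}\cup P^{-}=R$, $P^{+}\cap P^{-}=R_h$, $|P^{+}\setminus R_h|=|P^{-}\setminus R_h|$, and $\gh$-regularity of $h$ --- can succeed, because the assertion is false under those hypotheses alone. Take $\ggg=\gk_1\oplus\gk_2$ with $\gk_i\simeq\sli_2(\R)$, let $\ga_i\subset\gk_i$ be the diagonal Cartan subalgebras, $h$ a regular hyperbolic element, $\gh=Z_\ggg(h)=\ga_1\oplus\ga_2$, and set $\ggg^{+}=\gk_1\oplus\ga_2$, $\ggg^{-}=\ga_1\oplus\gk_2$. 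These are $\Ad_H$-invariant subalgebras with $\ggg^{+}\cap\ggg^{-}=\gh$, $\ggg^{+}+\ggg^{-}=\ggg$ and $\dim\ggg^{\pm}/\gh=2$; here $R_h=\emptyset$, $P^{+}=\{\pm\alpha_1\}$ and $P^{-}=\{\pm\alpha_2\}$ are closed, partition $R$, and have equal cardinality, yet $P^{+}$ is symmetric rather than parabolic, and the resulting invariant para-complex structure on $G/H$ arises from no fundamental gradation with $\ggg_0=\gh$. The ingredient that rescues the converse --- and the hypothesis actually present in \cite{AM}, whose theorem concerns \emph{bi-isotropic} decompositions --- is the isotropy of $\gm^{\pm}$ with respect to the Killing form (equivalently, with respect to an invariant symplectic form $\omega_z(X,Y)=B(z,[X,Y])$, which is how it enters the para-K\"ahler context of this paper). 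Since $B(\ggg_{\alpha},\ggg_{-\alpha})\neq 0$, isotropy of $\gm^{+}$ immediately forbids $\alpha$ and $-\alpha$ from both lying in $P^{+}\setminus R_h$, i.e.\ it gives $P^{+}\cap(-P^{+})=R_h$ outright; after that your identification of $(\ggg^{+},\ggg^{-})$ as an opposite pair of parabolics with common Levi factor $\gh$, and the construction of the grading element $d\in\gz(\gh)$, go through. As written, then, your proof establishes only one direction of the bijection; to close it you must either add the isotropy hypothesis explicitly or explain why every invariant para-complex structure in the intended setting satisfies it --- which, as the example shows, is not automatic.
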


 Let $\gh = Z_{\ggg}(h)$ be the centralizer of a semisimple element $h$.
We have a $B$-orthogonal decomposition $\gh = \gz + \gh'$, where
$\gz$ is the center of $\gh$ and $\gh'=[\gh,\gh]$. Recall
that an element $z\in\gz$ is called $\gh$-{\bf regular} if
$Z_\ggg(z)=\gh$.We need the following known
\begin{prop} Let $M = G/H$ be a homogeneous manifold as in the previous proposition.
Then there exists a natural $1-1$ correspondence between $\Ad_H$-invariant elements $z\in\gz$ and closed invariant $2$-forms $\omega_z$ on
$M=G/H$ given by
$$
\omega_z(X,Y)=B(z,[X,Y])\,,\quad \forall X,Y\in\gm =T_oM\subset \ggg\,.
$$
Moreover $\omega_z$ is a symplectic form if and only if $z$ is $\gh$-regular.
\end{prop}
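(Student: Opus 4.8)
The plan is to build the correspondence $z\mapsto\omega_z$ explicitly, invert it using the vanishing of $H^{2}(\ggg)$, and then read the nondegeneracy off $\ad_z$.

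First I would check that for every $\Ad_H$-invariant $z\in\gz$ the formula $\omega_z(X,Y)=B(z,[X,Y])$ defines a closed invariant $2$-form on $M$. Bilinearity and skew-symmetry are clear. Since $z$ is central in $\gh=Z_\ggg(h)$, the $\ad$-invariance of $B$ gives $\omega_z(X,Y)=B([z,X],Y)$, which vanishes whenever $X\in\gh$; combined with $\Ad_H z=z$ and the $\Ad$-invariance of $B$ this shows $\omega_z$ descends to an $\Ad_H$-invariant $2$-form on $\gm=T_oM$, i.e. to an invariant $2$-form on $G/H$. Its closedness is the Chevalley--Eilenberg cocycle identity $\sum_{\mathrm{cyc}}\omega_z([X,Y],Z)=0$, which here is $B\bigl(z,\sum_{\mathrm{cyc}}[[X,Y],Z]\bigr)=0$, i.e. the Jacobi identity. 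The map $z\mapsto\omega_z$ is injective: if $\omega_z=0$ then $B(z,[\ggg,\ggg])=B(z,\ggg)=0$ since $\ggg$ is semisimple, so $z=0$ by nondegeneracy of $B$.

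For surjectivity I would take an arbitrary closed invariant $2$-form $\omega$ on $M$, pull it back to a left-invariant closed $2$-form on $G$, i.e. a $2$-cocycle in $\Lambda^{2}\ggg^{*}$. By Whitehead's second lemma $H^{2}(\ggg;\R)=0$, so it equals $d\eta$ for some $\eta\in\ggg^{*}$; writing $\eta=B(w,\cdot\,)$ via nondegeneracy of $B$ yields $\omega(X,Y)=-B(w,[X,Y])$ on all of $\ggg$, hence $\omega=\omega_z$ with $z:=-w$. It remains to see $z\in\gz$ and $\Ad_H z=z$. Since $\omega_z$ kills $\gh$, $B([z,X],Y)=0$ for all $X\in\gh$ and $Y\in\ggg$, so $[z,\gh]=0$; as $h\in\gh$ this forces $z\in Z_\ggg(h)=\gh$, and an element of $\gh$ centralizing all of $\gh$ lies in $\gz$. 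The invariance $\Ad_H z=z$ then follows from the $\Ad_H$-invariance of $\omega=\omega_z$ and the injectivity already proved. This gives the asserted bijection.

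Finally, for the symplectic criterion: for $z\in\gz$ the endomorphism $\ad_z$ is $B$-skew and annihilates $\gh$, hence preserves both $\gh$ and $\gm=\gh^{\bot}$, and $\omega_z|_{\gm}(X,Y)=B(\ad_z X,Y)$ with $B|_{\gm}$ nondegenerate. Thus $\omega_z$ is nondegenerate (hence symplectic, being already closed) iff $\ad_z|_{\gm}$ is invertible, i.e. iff $\ker\ad_z\cap\gm=0$. Since $\ad_z$ is block-diagonal for $\ggg=\gh\oplus\gm$ and $\gh\subseteq\ker\ad_z$, we have $Z_\ggg(z)=\gh\oplus(\ker\ad_z\cap\gm)$, so this condition is exactly $Z_\ggg(z)=\gh$, i.e. $z$ being $\gh$-regular. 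The one genuinely non-formal input is the surjectivity step through $H^{2}(\ggg)=0$; the rest is direct manipulation of the invariance and nondegeneracy of the Killing form, so I do not expect any real obstacle beyond keeping track of signs in the Chevalley--Eilenberg differential.
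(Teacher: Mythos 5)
Your argument is correct and complete. Note that the paper itself offers no proof of this proposition: it is introduced with ``we need the following known'' and used as a black box, so there is no in-text argument to compare yours against. What you have written is the standard proof, and every step checks out: the identity $B(z,[X,Y])=B([z,X],Y)$ shows $\gh\subset\ker\omega_z$, Jacobi gives closedness, semisimplicity ($[\ggg,\ggg]=\ggg$ plus nondegeneracy of $B$) gives injectivity, Whitehead's second lemma $H^2(\ggg;\R)=0$ gives surjectivity onto closed invariant forms, and the horizontality of the pullback forces $[z,\gh]=0$, hence $z\in Z_\ggg(h)=\gh$ and then $z\in\gz$. The nondegeneracy criterion is also right: since $\ad_z$ is $B$-skew and kills $\gh$, it preserves $\gm=\gh^\perp$, and because $B\vert_\gh$ (hence $B\vert_\gm$) is nondegenerate, $\omega_z\vert_\gm=B(\ad_z\,\cdot\,,\cdot\,)$ is nondegenerate exactly when $\ker\ad_z\cap\gm=0$, i.e.\ $Z_\ggg(z)=\gh$. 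You correctly identify the one non-formal input as the cohomological vanishing; the recovery of $\Ad_H$-invariance of $z$ from invariance of $\omega_z$ via the already-established injectivity is also handled properly (one only needs that $\Ad_H$ preserves $\gz$, which it does since it preserves $\gh$).
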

\begin{cor} Any invariant para-complex structure $K$ on $M=G/H$ is skew-symmetric with respect to any invariant symplectic structure. In
other words, any pair $(K,\omega)$ defines an invariant para-K\"ahler structure.
\end{cor}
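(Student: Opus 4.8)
The plan is to reduce the assertion to the single vanishing condition $\omega|_{\gm_\pm}=0$. By the Proposition describing invariant para-K\"ahler structures on a reductive homogeneous manifold as triples $(\omega,\gm^+,\gm^-)$, once we know that an invariant symplectic $\omega$ on $M=G/H$ vanishes on both eigenspaces $\gm_+$ and $\gm_-$ of $K$, the pair $(K,\omega)$ (equivalently $g=\omega\circ K$ together with $K$) is automatically an invariant para-K\"ahler structure, i.e.\ $K$ is $\omega$-skew-symmetric. So I would start from the two preceding propositions: an invariant para-complex structure $K$ corresponds to an $\Ad_H$-invariant fundamental gradation $\ggg=\ggg_{-k}+\cdots+\ggg_k$ with $\ggg_0=\gh$ and $\gm_\pm=\sum_{i>0}\ggg_{\pm i}$, while an invariant symplectic form is $\omega_z(X,Y)=B(z,[X,Y])$ for some $\gh$-regular $\Ad_H$-invariant $z\in\gz$.

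The key observation is the orthogonality of the graded summands under the Killing form. Writing $d\in\ggg_0$ for the grading element, $\ad_d$-invariance of $B$ gives, for $X\in\ggg_i$ and $Y\in\ggg_j$, the identity $0=B([d,X],Y)+B(X,[d,Y])=(i+j)\,B(X,Y)$, so $B(\ggg_i,\ggg_j)=0$ whenever $i+j\neq 0$. Now take $X\in\ggg_i$, $Y\in\ggg_j$ with $i,j\geq 1$; then $[X,Y]\in\ggg_{i+j}$ with $i+j\geq 2$, and since $z\in\gz\subset\ggg_0$ we get $\omega_z(X,Y)=B(z,[X,Y])=0$. Extending bilinearly over $\gm_+=\sum_{i\geq 1}\ggg_i$ shows $\omega_z|_{\gm_+}=0$, and the same argument on the negative side gives $\omega_z|_{\gm_-}=0$.

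It then remains only to verify the bookkeeping hypotheses of the triple criterion: $\gm=\gm_++\gm_-$ is $j(H)$-invariant because the gradation is $\Ad_H$-invariant; $\ggg^\pm=\gh+\gm_\pm=\sum_{i\geq 0}\ggg_{\pm i}$ are subalgebras directly from $[\ggg_i,\ggg_j]\subset\ggg_{i+j}$; and $\omega_z$ is a closed $\Ad_H$-invariant $2$-form with kernel exactly $\gh$ by the cited Proposition, using that $z$ is $\gh$-regular (which holds since $\omega_z$ was assumed symplectic). Combined with $\omega_z|_{\gm_\pm}=0$, this makes $(\omega_z,\gm_+,\gm_-)$ an admissible triple, proving the Corollary. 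I do not anticipate a genuine obstacle here: the whole content is the grading-orthogonality of $B$ together with the fact that a bracket of two elements of $\gm_\pm$ lands in strictly higher graded degree, hence is $B$-orthogonal to $\gz\subset\ggg_0$; everything else is an assembly of the correspondences already established above.
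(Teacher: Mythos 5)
Your proof is correct and follows exactly the route the paper intends: the paper states this corollary without proof as an immediate consequence of the preceding propositions, and the missing detail is precisely the one you supply, namely that $\ad_d$-invariance of the Killing form forces $B(\ggg_i,\ggg_j)=0$ for $i+j\neq 0$, so $\omega_z(X,Y)=B(z,[X,Y])$ vanishes on $\gm_\pm$ because $[\gm_\pm,\gm_\pm]$ lies in strictly positive (resp.\ negative) degree while $z\in\ggg_0$. Nothing further is needed.
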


\subsection{Fundamental gradations of a real semisimple Lie algebra.}
We recall the description of a real form of a complex
semisimple Lie algebra in terms of Satake diagrams, which are
extensions of Dynkin diagrams (see \cite{GOV}).

Any real form of a complex semisimple Lie algebra $\ggg$ is the
fixed point set $\ggg^\sigma$ of an anti-linear involution $\sigma$ of $\ggg$.
A Cartan subalgebra $\ga^{\sigma}$ of $\ggg^{\sigma}$ decomposes into a direct sum
$\ga=\ga^+\oplus\ga^-$, where
\begin{eqnarray}
&& \ga^+ := \{X\in\gh \,|\, \ad_{\ggg}(X) \mbox{ has purely imaginary eigenvalues}\}\,,\\
&& \ga^- := \{X\in\gh \,|\, \ad_{\ggg}(X) \mbox{ has real eigenvalues}\}\,,
\end{eqnarray}
are called the {\bf toroidal} and the {\bf vectorial} part of $\ga$, respectively.

Let $\ga^\s$ be a maximal vectorial Cartan subalgebra of $\ggg^\s$,
i.e.\ such that the vectorial part $\ga^-$ has maximal dimension.
Then the root decomposition of
$\ggg^\sigma$, with respect to the subalgebra
$\ga^\sigma $, can be written as
$$
\ggg^\sigma =\ga^\sigma + \sum_{\lambda\in\Sigma}\ggg^\sigma_\lambda\,,
$$
where $\Sigma\subset (\ga^-)^*$ is a (non-reduced) root system.

Denote by $\ga =(\ga^\sigma)^{\C}$ the complexification of
$\ga^\sigma$ (which is a $\sigma$-invariant Cartan subalgebra)
and by $\sigma^*$ the induced anti-linear action of
$\sigma$ on $\ga^*$:
$$
\sigma^*\alpha =\overline{\alpha\circ\sigma}\,,\quad\alpha\in\ga^*\,.
$$
Consider the root space decomposition of $\ggg$ with respect to $\ga$:
$$
\ggg =\ga+\sum_{\alpha\in R} \ggg_\alpha \,,
$$
where $R$ is the root system of $(\ggg,\ga)$.
Note that $\sigma^*$ preserves $R$, i.e.\
$\sigma^*R=R$.

Now we relate the root space decompositions of
$\ggg^\sigma$ and $\ggg$. We define the subsystem of compact roots
$R_\bullet$ by
$$
R_\bullet =\{\alpha\in R\,\,\vert\,\, \sigma^*\alpha = -\alpha \}=
\{\alpha\,\,\vert\,\,\alpha(\gh^-)=0\}
$$
and denote by $R'=R\setminus R_\bullet$ the complementary set of
non-compact roots. We can choose a system $\Pi$ of simple roots of
$R$ such that the corresponding system $R^+$ of positive roots
satisfies the condition:
$$
R'_+:=R'\cap R^+\,\,\,\hbox{\rm is} \,\,\,\,\sigma\hbox{\rm -invariant}.
$$
In this case, $\Pi$ is called a $\sigma$-{\bf fundamental system}
of roots.
\\
We denote by $\Pi_\bullet =\Pi\cap R_\bullet$ the set of
compact simple roots (which are also called black) and by $\Pi'
=\Pi\setminus \Pi_\bullet$ the non-compact simple roots (called
white). The action of $\sigma^*$ on white roots satisfies the
following property:
\\
for any $\alpha\in\Pi'$ there exists a unique $\alpha'\in\Pi'$
such that\ $\sigma^*\alpha-\alpha'$\ is a linear combination of
black roots.
In this case, we say that $\alpha, \,\alpha'$ are
$\sigma$-{\bf equivalent}.

The information about the fundamental system ($\Pi =\Pi_\bullet \cup
\Pi'$) together with the $\sigma$-equivalence can be visualized in
terms of the {\bf Satake diagram}, which is defined as follows:
\newline
on the Dynkin diagram $\G$ of the system of simple roots $\Pi$,
we paint the vertices which correspond to black roots black and
we join the vertices which correspond to
$\sigma$-equivalent roots $\alpha,\,\alpha'$ by a curved arrow.
We recall that there is a
natural $1-1$ correspondence between Satake diagrams subordinated to
the Dynkin diagram of a complex semisimple Lie algebra $\ggg$, up to
isomorphisms, and real forms $\ggg^\sigma$ of $\ggg$,
up to conjugations.
The list of Satake diagram of real
simple Lie algebras is known (see e.g. \cite{GOV}).
\smallskip\par
The following proposition describes fundamental gradations of
semisimple complex (respectively, real) Lie algebras in terms of crossed Dynkin (respectively, Satake) diagrams (see
e.g. \cite{Dj}, \cite{AMT}).
\begin{prop}\label{prop-grad-real} A fundamental gradation of a
complex semisimple Lie algebra $\ggg=\sum\ggg_p$ can be given by a crossed Dynkin diagram $\G$. Crossed
nodes belong to a subset $\Pi^1$ of a simple root system $\Pi =\Pi^0\cup\Pi^1$.
The corresponding grading vector ${d}\in\ga$ is given by
$$
\alpha_i({d})=
\begin{cases}
0 & \hbox{\rm if}\,\, \alpha_i\in\Pi^0\\
1 &\hbox{\rm if}\,\, \alpha_i\in\Pi^1\,.
\end{cases}
$$
The subspaces $\ggg_p$ are given by
$$
\ggg_p =\sum_{\alpha({d})=p}\ggg_\alpha\,.
$$A real form $\ggg^\s$ is consistent with the gradation (i.e.\ $
d\in\ggg^\s$) if and only if the corresponding Satake diagram
$\hat\G$ satisfies the following properties:
\begin{enumerate}
\item[i)] all black nodes of $\hat\G$ are uncrossed; \\
\item[ii)] two
nodes related by an arrow are both crossed or uncrossed.
\end{enumerate}
\end{prop}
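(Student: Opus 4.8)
The statement has two parts -- the crossed Dynkin datum for gradations of a complex semisimple $\ggg$, and the refinement to real forms via Satake diagrams -- and I would prove them in that order, the second resting on the first. For the complex part, recall that a $\mathbb Z$-gradation \eqref{gradation} is determined by a grading element $d$ with $\ad_d|_{\ggg_p}=p\,\Id$; such a $d$ is semisimple, so after an inner automorphism I may take $d\in\ga$, whence $\ggg_p=\sum_{\alpha(d)=p}\ggg_\alpha$ (with $\ga\subset\ggg_0$) and $\alpha(d)\in\Z$ for all $\alpha\in R$. Choosing $R^+$ so that $\alpha(d)\ge 0$ on $R^+$, the gradation is encoded by the non-negative integers $c_i:=\alpha_i(d)$, $\alpha_i\in\Pi$, and what must be shown is that the gradation is fundamental, i.e.\ $\gm_\pm$ equals the subalgebra $\langle\ggg_{\pm1}\rangle$ generated by $\ggg_{\pm1}$, exactly when every $c_i\in\{0,1\}$; setting $\Pi^1=\{\alpha_i:c_i=1\}$ then gives the crossed diagram and the description $\ggg_p=\sum_{\alpha(d)=p}\ggg_\alpha$.

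For the ``only if'' direction, if some $c_i\ge 2$ I observe that any homogeneous element of $\langle\ggg_1\rangle$ of degree $p\ge 2$ is a sum of brackets $[a,b]$ with $a,b$ of positive degrees, hence lies in $\sum\{\ggg_\mu:\mu=\nu+\nu',\ \nu,\nu'\in R^+\}$; since the simple root $\alpha_i$ is not a sum of two positive roots, $\ggg_{\alpha_i}\not\subset\langle\ggg_1\rangle$, whereas $\ggg_{\alpha_i}\subset\ggg_{c_i}$, so $\gm_+\neq\langle\ggg_1\rangle$ and the gradation is not fundamental. For the ``if'' direction, assume all $c_i\in\{0,1\}$. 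Since $\ggg_1$ is $\ad\ggg_0$-invariant, so is the subalgebra $\langle\ggg_1\rangle$ it generates. I then show $\ggg_\alpha\subset\langle\ggg_1\rangle$ for every $\alpha\in R^+$ with $\alpha(d)\ge 1$, by induction on the height of $\alpha$: if $\alpha(d)=1$ then $\ggg_\alpha\subset\ggg_1$ and there is nothing to prove; if $\alpha(d)\ge 2$, write $\alpha=\alpha_j+\beta$ with $\alpha_j\in\Pi$ and $\beta\in R^+$, so that $\beta(d)=\alpha(d)-c_j\ge 1$ and $\beta$ has smaller height, hence $\ggg_\beta\subset\langle\ggg_1\rangle$ by induction; then $\ggg_\alpha=[\ggg_{\alpha_j},\ggg_\beta]$ lies in $[\ggg_1,\langle\ggg_1\rangle]\subset\langle\ggg_1\rangle$ when $c_j=1$, and in $[\ggg_0,\langle\ggg_1\rangle]\subset\langle\ggg_1\rangle$ when $c_j=0$. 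Thus $\gm_+=\langle\ggg_1\rangle$, and $\gm_-=\langle\ggg_{-1}\rangle$ by the symmetric argument applied to $-d$.

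For the real part, fix a $\sigma$-invariant Cartan subalgebra $\ga=(\ga^\sigma)^{\C}$ and a $\sigma$-fundamental system $\Pi$, so the Satake diagram $\hat\G$ is defined, and take $d\in\ga\subset\ggg$ as above. As the roots separate points of $\ga$, one has $d\in\ggg^\sigma$ iff $\sigma d=d$ iff $\alpha(\sigma d)=\alpha(d)$ for all $\alpha\in R$; using $\alpha(\sigma d)=\overline{(\sigma^*\alpha)(d)}$ and the fact that $\alpha(d)$ and $(\sigma^*\alpha)(d)$ are integers, this becomes $(\alpha-\sigma^*\alpha)(d)=0$ for all $\alpha\in R$, and by linearity of $\sigma^*$ it suffices to require it for $\alpha\in\Pi$. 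For a black root $\alpha\in\Pi_\bullet$ one has $\sigma^*\alpha=-\alpha$, so the requirement is $2\alpha(d)=0$, i.e.\ $\alpha$ is uncrossed: this is (i), and it is forced independently of the white roots. For a white root $\alpha\in\Pi'$ one has $\sigma^*\alpha=\alpha'+\sum_{\beta\in\Pi_\bullet}c_\beta\beta$ with $\alpha'$ its $\sigma$-equivalent white root; granted (i), every $\beta(d)=0$, so the requirement reduces to $\alpha(d)=\alpha'(d)$, i.e.\ $\alpha$ and $\alpha'$ are both crossed or both uncrossed: this is (ii). Hence $d\in\ggg^\sigma$ iff (i) and (ii) hold.

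The main obstacle is the ``if'' direction of the complex case. The naive idea -- strip a \emph{crossed} simple root off $\alpha$ to lower $\alpha(d)$ by one, then induct -- fails, because it may happen that every simple root one can subtract from $\alpha$ while staying in $R^+$ is uncrossed (this already occurs for the highest root of $B_3$ with the two end nodes crossed). The fix used above is to strip off an arbitrary simple root and dispose of the uncrossed case by invoking $\ad\ggg_0$-invariance of $\langle\ggg_1\rangle$; once this point is in place, both the complex argument and its real refinement are essentially bookkeeping with roots.
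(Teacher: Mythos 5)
The paper does not actually prove this proposition: it is stated as a known result with a pointer to \cite{Dj} and \cite{AMT}, so there is no internal argument to compare yours against. Your proof is correct and self-contained, and it follows the standard line of the cited references. In the complex part, the ``only if'' direction (a simple root $\alpha_i$ with $\alpha_i(d)\geq 2$ obstructs fundamentality because $\alpha_i$ is not a sum of positive roots) and the ``if'' direction are both sound; in particular you correctly identified and avoided the real trap in the induction, namely that one cannot always strip a \emph{crossed} simple root off $\alpha$ while staying in $R^+$, and your fix --- strip an arbitrary simple root and absorb the uncrossed case into the $\ad\,\ggg_0$-invariance of the subalgebra generated by $\ggg_1$ --- is exactly the right one (your $B_3$ example is a genuine counterexample to the naive induction). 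The real part is also correct: the reduction of $\sigma d=d$ to $(\alpha-\sigma^*\alpha)(d)=0$ for $\alpha\in\Pi$ uses only that $\sigma^*$ is additive on the root lattice and that all roots take integer values on $d$, and conditions (i) and (ii) then fall out of the definition of black roots and of $\sigma$-equivalence of white roots. Two points are left implicit but are harmless: that an assignment of $0$'s and $1$'s to $\Pi$ conversely determines a unique $d\in\ga$ (so the correspondence is a bijection), and that the same simple system $\Pi$ is simultaneously $\sigma$-fundamental and dominant for $d$ --- the latter is built into the statement of the proposition, since the Satake diagram and the crosses are drawn on one and the same diagram.
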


\subsection{Computation of the Koszul form and the main theorem.}
Now we compute the Koszul form of a homogeneous para-complex manifold
$(M=G^\sigma/H^\sigma,K_M,\vol)$, where $G^\s$ is a real form of a complex semisimple Lie group $G$,
 $M = G^\s/H^\s$ is a covering of a semisimple adjoint orbit $\Ad_{G^\s}d$, $K_M$ is the invariant
para-complex structure on $M$ defined by the gradation of the Lie algebra $\ggg^\sigma$ with the grading element $d \in \ggg^\s$
and $\vol$ is an invariant volume form on $M$. According to Proposition
\ref{propkoszul}, it is sufficient to describe the Koszul form $\psi$
on the graded Lie algebra $\ggg^\s$ or its complexification $\ggg$.

We choose a Cartan subalgebra $\ga\subset\ggg_0$ of the Lie algebra $\ggg$ and denote by $R$ the root system of $(\ggg,\ga)$.
Let
$$
\Pi =\Pi^0\cup \Pi^1
$$
be the decomposition of a simple root system $\Pi$ of the root system $R$ which corresponds to the gradation and by
$$
P = P^0\cup P^1
$$
the corresponding decomposition of the fundamental weights.\newline
We denote by $R^+$ the set of positive roots with respect to the basis $\Pi$ and set
$$
R^+_0=\{\alpha\in R^+\,\,\vert\,\,\ggg_\alpha\subset \ggg_0\}\,.
$$
The following lemma describes the Koszul form $\psi \in \ga^\s \subset \ggg^\s \subset \ggg$ defined by \eqref{koszulform1} in
terms of fundamental weights.
\begin{lemma}\label{psilemma}
The Koszul $1$-form $\psi \in \ga^*$ is equal to
$$
\psi =2(\delta^\ggg -\delta^\gh)
$$
where
$$
\delta^\ggg=\sum_{\alpha\in R^+}\alpha\,,\qquad
\delta^\gh=\sum_{\alpha\in R_0^+}\alpha\,,
$$
and the linear forms on the Cartan
subalgebra $\ga$ are considered as linear forms on $\ggg$ which
vanish on root spaces $\ggg_\alpha$.
\end{lemma}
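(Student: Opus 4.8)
The plan is to evaluate the linear form $\psi$ of \eqref{koszulform1} directly on the Cartan subalgebra $\ga\subset\ggg_0=\gh$, and then to check separately that $\psi$ vanishes on every root space, so that $\psi$ really is an element of $\ga^*$. Recall that for the para-complex structure attached to the gradation \eqref{gradation} the endomorphism $\tilde K$ is zero on $\ggg_0$ and acts on $\ggg_j$ ($j\neq 0$) as the scalar $\mathrm{sign}(j)$. As already noted, it suffices to work in the complexification $\ggg$; and since the trace of a real endomorphism equals that of its $\C$-linear extension, I would compute $\tr_{\ggg/\gh}$ over $\ggg$, using the identification $\ggg/\gh\cong\gm=\bigoplus_{j\neq 0}\ggg_j=\bigoplus_{\alpha\in R,\ \alpha(d)\neq 0}\ggg_\alpha$ coming from the root decomposition relative to $\ga$.

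First I would take $H\in\ga$. As $\ga\subset\ggg_0=\gh=\ker\tilde K$, we get $\tilde K H=0$, so the first summand $\ad_{\tilde K H}$ in \eqref{koszulform1} disappears and $\psi(H)=\tr_{\ggg/\gh}(\tilde K\,\ad_H)$. On the one-dimensional root space $\ggg_\alpha$ with $\alpha(d)\neq 0$, the operator $\ad_H$ acts as the scalar $\alpha(H)$ and $\tilde K$ as $\mathrm{sign}(\alpha(d))$, so
$$\psi(H)=\sum_{\alpha\in R,\ \alpha(d)\neq 0}\mathrm{sign}(\alpha(d))\,\alpha(H)\,.$$
Because $R=-R$, grouping $\alpha$ with $-\alpha$ turns the part of the sum with $\alpha(d)<0$ into $\sum_{\beta\in R,\ \beta(d)>0}\beta(H)$, whence $\psi(H)=2\sum_{\alpha\in R,\ \alpha(d)>0}\alpha(H)$.

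Next I would pin down the index set. Since the gradation is the one described in Proposition \ref{prop-grad-real}, writing $\alpha=\sum_i n_i\alpha_i$ gives $\alpha(d)=\sum_{\alpha_i\in\Pi^1}n_i$; in particular $\alpha(d)\ge 0$ for every $\alpha\in R^+$ and $\alpha(d)<0$ for every negative root, so $\{\alpha\in R:\alpha(d)>0\}=R^+\setminus R_0^+$. Therefore
$$\sum_{\alpha(d)>0}\alpha=\sum_{\alpha\in R^+}\alpha-\sum_{\alpha\in R_0^+}\alpha=\delta^\ggg-\delta^\gh\,,$$
and hence $\psi(H)=2(\delta^\ggg-\delta^\gh)(H)$ for all $H\in\ga$. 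Finally, to see that $\psi\in\ga^*$, i.e. that $\psi$ vanishes on each $\ggg_\alpha$, I would use that $\psi$ is $\ad_\gh$-invariant, in particular $\ad_\ga$-invariant: for $X_\alpha\in\ggg_\alpha$ and $H\in\ga$ this yields $0=\psi([H,X_\alpha])=\alpha(H)\,\psi(X_\alpha)$, and choosing $H$ with $\alpha(H)\neq 0$ forces $\psi(X_\alpha)=0$. Thus $\psi=2(\delta^\ggg-\delta^\gh)$ as claimed. There is no serious obstacle here; the only points needing care are the compatibility of the real and complexified traces and the observation that $\alpha(d)\ge 0$ on all of $R^+$, which is precisely what makes the index set of the surviving sum equal to $R^+\setminus R_0^+$.
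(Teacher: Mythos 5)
Your proof is correct and follows essentially the same route as the paper: both evaluate the trace formula \eqref{koszulform1} on the Cartan subalgebra via the root space decomposition, where $\tilde{K}$ acts as $\pm 1$ on $\ggg_{\pm j}$ and as $0$ on $\ggg_0$, and then pair $\alpha$ with $-\alpha$ to obtain $2\sum_{\alpha\in R^+\setminus R_0^+}\alpha(t)$. The only (harmless) difference is in the step showing that $\psi$ vanishes on the root spaces: the paper notes that $\ad_{\tilde{K}E_\alpha}$ and $\tilde{K}\ad_{E_\alpha}$ are nilpotent and hence traceless, whereas you invoke the $\ad_{\gh}$-invariance of $\psi$ recorded in Proposition \ref{propkoszul} and apply it to $[H,X_\alpha]=\alpha(H)X_\alpha$ --- both arguments are valid.
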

\begin{proof} First of all, remark that for any $E_\alpha\in\ggg_\alpha$ we have
$$
KE_\alpha =
\left\{
\begin{array}{rl}
0 & \mbox{\rm if}\,\, \pm\alpha\in R^+_0\\[3pt]
\pm E_\alpha &\mbox{\rm if}\,\, \pm\alpha\in R^+\setminus R^+_0\,.
\end{array}
\right.
$$
Hence the endomorphisms $K\ad_{E_\alpha}$ and $\ad_{KE_\alpha}$ are nilpotent and consequently $\psi(E_\alpha)=0$, for
any root $\alpha\in R$. In particular $\psi\vert_\gm =0$.\newline
Assume now that $X=t$ belongs to the Cartan subalgebra $\ga$. Then $\ad_t(E_\alpha)=\alpha(t) E_\alpha$.
Therefore
\begin{eqnarray*}
\psi(t)&=&\tr_\gm (K\ad_t -\ad_{Kt})=\tr_{\gm}(K\ad_t)=\tr_{\gm_+}(K\ad_t)+\tr_{\gm_-}(K\ad_t)\\[3pt]
&=& \sum_{\alpha\in R^+\setminus R^+_0}\alpha(t)-\sum_{-\alpha\in R^+\setminus R^+_0}\alpha(t) =2\sum_{\alpha\in R^+\setminus R^+_0}\alpha(t)\\[3pt]
&=& 2\sum_{\alpha\in R_+}\alpha(t)-2\sum_{\alpha\in R^0_+}\alpha(t)\,.
\end{eqnarray*}
%Since the sum of roots $\sum_{\alpha\in R_+}\alpha(t) = 2\sum_{\pi\in P}\pi$
%(see e.g. \cite[p. 85]{GOV}), we get the formula.
\end{proof}
By the last lemma and Proposition 4.1 of \cite{AP}, it follows
\begin{prop}\label{perelomov} Let $\Pi=\Pi^0\cup \Pi^1=\{\alpha_1,\ldots ,\alpha_\ell\}$
be the simple root system (corresponding to the gradation) and
denote by $\pi_i$ the fundamental weight corresponding
to the simple root $\alpha_i$, namely
$$
2\frac{(\pi_i,\alpha_j)}{(\alpha_j,\alpha_j)} =\delta_{ij}\,.
$$
If $P^1 =\{\pi_{i_1},\ldots , \pi_{i_r}\}$, then the Koszul form $\psi$ is equal to
\begin{equation}\label{koszulform}
\psi =2\sum_{\pi\in
P^1}n_\pi\pi=2\sum_{h=1}^ra_{i_h}\pi_{i_h}\,,
\end{equation}
where
\begin{equation}\label{koszulcoefficients}
a_{i_h}=2+b_{i_h}\,,\quad \hbox{with}\quad b_{i_h}=-2\frac{(\delta^\gh,\alpha_{i_h})}{(\alpha_{i_h},\alpha_{i_h})}\geq 0\,.
\end{equation}
\end{prop}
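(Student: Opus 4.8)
The quickest route is to combine Lemma \ref{psilemma} with Proposition 4.1 of \cite{AP}: Lemma \ref{psilemma} already identifies $\psi$ with $2(\delta^{\ggg}-\delta^{\gh})$, and the cited proposition is precisely the computation of this combination in the basis of fundamental weights. For completeness I would, however, carry out the coefficient computation directly. The one structural fact needed is that, since the grading element $d$ satisfies $\alpha_i(d)=0$ for $\alpha_i\in\Pi^0$ and $\alpha_i(d)=1$ for $\alpha_i\in\Pi^1$, a positive root $\alpha=\sum n_i\alpha_i$ has $\alpha(d)=\sum_{\alpha_i\in\Pi^1}n_i$; hence $R^+_0=\{\alpha\in R^+\mid \ggg_\alpha\subset\ggg_0\}$ is exactly the positive system of the root subsystem $R_0=R\cap\mathrm{span}(\Pi^0)$ whose simple system is $\Pi^0$, and $\gh=\ggg_0$ is reductive with $[\gh,\gh]$ having root system $R_0$.

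Next I would record that $\delta^{\ggg}=2\rho^{\ggg}$ and $\delta^{\gh}=2\rho^{\gh}$, where $\rho^{\ggg}=\tfrac12\sum_{\alpha\in R^+}\alpha$ and $\rho^{\gh}=\tfrac12\sum_{\alpha\in R_0^+}\alpha$ are the Weyl vectors of $R$ and of $R_0$. By the classical identity $2(\rho^{\ggg},\alpha_j)/(\alpha_j,\alpha_j)=1$ for every $\alpha_j\in\Pi$ we get $2(\delta^{\ggg},\alpha_j)/(\alpha_j,\alpha_j)=2$ for all $j$, and similarly $2(\rho^{\gh},\beta)/(\beta,\beta)=1$ for every $\beta\in\Pi^0$ gives $2(\delta^{\gh},\alpha_j)/(\alpha_j,\alpha_j)=2$ when $\alpha_j\in\Pi^0$, while for $\alpha_j=\alpha_{i_h}\in\Pi^1$ this number equals $-b_{i_h}$ by the very definition of $b_{i_h}$ in \eqref{koszulcoefficients}. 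Expanding $\psi$ in the fundamental-weight basis via $\psi=\sum_j\bigl(2(\psi,\alpha_j)/(\alpha_j,\alpha_j)\bigr)\pi_j$, the $\Pi^0$-components vanish (since $2-2=0$) and the coefficient at $\alpha_{i_h}\in\Pi^1$ is $2\bigl(2-(-b_{i_h})\bigr)=2(2+b_{i_h})=2a_{i_h}$, which yields $\psi=2\sum_h a_{i_h}\pi_{i_h}$, i.e.\ \eqref{koszulform} with $n_{\pi_{i_h}}=a_{i_h}$.

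It remains to check $b_{i_h}\geq0$, equivalently $(\delta^{\gh},\alpha_{i_h})\leq0$. Here I would use that $\delta^{\gh}=\sum_{\alpha\in R_0^+}\alpha$ is a sum of positive roots of $R_0$, hence a nonnegative integral combination $\sum_{\beta\in\Pi^0}m_\beta\beta$ of the simple roots $\Pi^0$. Pairing with $\alpha_{i_h}$ gives $(\delta^{\gh},\alpha_{i_h})=\sum_{\beta\in\Pi^0}m_\beta(\beta,\alpha_{i_h})$, and each $(\beta,\alpha_{i_h})\leq0$ because $\beta$ and $\alpha_{i_h}$ are distinct simple roots of $R$; hence $(\delta^{\gh},\alpha_{i_h})\leq0$ and $b_{i_h}\geq0$.

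The argument is essentially bookkeeping once Lemma \ref{psilemma} is in hand, and there is no serious obstacle; the two points to be careful about are (i) that for a real form $\ggg^\sigma$ the whole computation may be performed after complexification, which is legitimate since $\psi$ is a linear form on the $\sigma$-invariant Cartan subalgebra $\ga^\sigma$ extending $\C$-linearly to $\ga$, and (ii) tracking normalizations, so that the factor $2$ in $\psi=2(\delta^{\ggg}-\delta^{\gh})$, the factor $2$ in $\delta=2\rho$, and the coweight pairing combine to give exactly the single overall $2$ in front of $\sum_h a_{i_h}\pi_{i_h}$ together with $a_{i_h}=2+b_{i_h}$. As a byproduct one sees each $a_{i_h}\geq2$, so that $\psi$ is a strictly dominant weight, which is the input needed afterwards for nondegeneracy of the canonical form.
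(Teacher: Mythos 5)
Your proposal is correct and follows the same route as the paper: the paper's proof of Proposition \ref{perelomov} consists precisely of invoking Lemma \ref{psilemma} together with Proposition 4.1 of \cite{AP}, and your explicit coefficient computation (expanding $\psi=2(\delta^{\ggg}-\delta^{\gh})$ in the fundamental-weight basis, using $2(\delta^{\ggg},\alpha_j)/(\alpha_j,\alpha_j)=2$ for all $j$ and $2(\delta^{\gh},\alpha_j)/(\alpha_j,\alpha_j)=2$ for $\alpha_j\in\Pi^0$, plus negativity of products of distinct simple roots for $b_{i_h}\geq0$) is exactly the content of that cited result. You have merely filled in the bookkeeping that the paper delegates to the reference.
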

\smallskip
Note that the $1$-form $\psi$ depends only on the decomposition
$\Pi=\Pi^0\cup\Pi^1$.\smallskip\par Let us denote by
$\{X_\alpha\,,\alpha\in R,\,\,H_i\,,\,\,i=1,\ldots,\ell\}$
a Chevalley basis of the Lie algebra $\ggg$. For $\alpha\in R$, we
denote by $\omega^\alpha$ the linear forms on $\ggg$ such that
$$
\omega^\alpha (\ga)=0\,,\quad\omega^\alpha(X_\beta)=\delta^\alpha_\beta\,,
$$
for any $\beta\in R$. If $\xi\in \ga^*$ and
$\alpha\in R$, then we put
$$
n(\xi,\alpha) =2\frac{(\xi,\alpha)}{(\alpha,\alpha)}\,.
$$
The next lemma easily follows from the commutation rules in the Lie algebra $\ggg$ (see \cite[p. 145]{H} and also \cite{AP}).
\begin{lemma}\label{dxilemma}
The differential of any $1$-form $\xi\in \ga^*$ is given by
$$
d\xi =\sum_{\alpha\in R^+} n(\xi,\alpha) \omega^\alpha \wedge \omega^{-\alpha}\,.
$$
Moreover $d\xi$ is an $\ad_\ga$-invariant $2$-form on $\ggg$ with kernel
$$
\ker (d\xi) = \ga + \mbox{\rm span}\left\{E_\alpha\,\,\vert\,\, (\xi,\alpha)=0 \right\}\,.
$$
\end{lemma}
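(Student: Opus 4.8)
The plan is to compute $d\xi$ directly from the structure constants of the Chevalley basis, using the fact that for a left-invariant $1$-form $\xi$ on $G$ (equivalently, in the Chevalley--Eilenberg complex of $\ggg$) one has $d\xi(X,Y) = -\xi([X,Y])$. Since by hypothesis $\xi\in\ga^*$ is extended to $\ggg$ by zero on all root spaces (as in the treatment of $\delta^\ggg,\delta^\gh$ above), the whole computation reduces to evaluating the bracket $[X,Y]$ on pairs of basis elements $\{X_\alpha\,,H_i\}$ and recording when it has a non-trivial component along $\ga$.

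First I would run the elementary case analysis. If $X,Y\in\ga$ then $[X,Y]=0$; if $X=H\in\ga$ and $Y=X_\alpha$ then $[H,X_\alpha]=\alpha(H)X_\alpha\in\ggg_\alpha$; if $X=X_\alpha$, $Y=X_\beta$ with $\alpha+\beta\ne 0$ then $[X_\alpha,X_\beta]$ lies in $\ggg_{\alpha+\beta}$ (and is $0$ if $\alpha+\beta\notin R$). In all of these $\xi$ annihilates the bracket, so $d\xi$ vanishes on such pairs. The only surviving pairs are $\{X_\alpha,X_{-\alpha}\}$, $\alpha\in R$, for which $[X_\alpha,X_{-\alpha}]=H_\alpha$ is the coroot and $\xi(H_\alpha)=n(\xi,\alpha)$ --- this is just the defining relation $\beta(H_\alpha)=n(\beta,\alpha)$ of the coroot together with the identification of $\ga^*$ with $\ga$ via the Killing form. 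Comparing with the claimed right-hand side, I would note that $\omega^\alpha\wedge\omega^{-\alpha}$ is supported exactly on the pair $(X_\alpha,X_{-\alpha})$ among basis elements and evaluates to $+1$ on $(X_\beta,X_{-\beta})$ for $\beta\in R^+$; since $n(\xi,-\alpha)=-n(\xi,\alpha)$, summing over $\alpha\in R^+$ rather than all of $R$ accounts correctly for the antisymmetry of the wedge. This yields $d\xi=\sum_{\alpha\in R^+}n(\xi,\alpha)\,\omega^\alpha\wedge\omega^{-\alpha}$. The one point requiring genuine care is the sign and normalization bookkeeping: one must check that, with the precise conventions of the cited commutation rules (the Chevalley normalization of $[X_\alpha,X_{-\alpha}]$) and of the exterior differential used in the paper, the constant comes out as $n(\xi,\alpha)$ and not its negative.

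For the remaining assertions I would argue as follows. The $\ad_\ga$-invariance is immediate from the formula, since each summand $\omega^\alpha\wedge\omega^{-\alpha}$ has total $\ga$-weight $\alpha-\alpha=0$; alternatively, $d$ commutes with Lie derivatives and $\xi$ is $\ad_\ga$-invariant because it kills the $\ad_\ga$-stable root spaces and $\ga$ is abelian. For the kernel, write a general element as $X=H+\sum_{\beta\in R}c_\beta X_\beta$ with $H\in\ga$. From the formula, $d\xi(X,H')=0$ for every $H'\in\ga$ (both $\omega^{\pm\alpha}$ vanish on $\ga$), while $d\xi(X,X_{-\beta})=c_\beta\, n(\xi,\beta)$ up to the universal sign, all other summands contributing $0$. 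Hence $X\in\ker(d\xi)$ if and only if $c_\beta\, n(\xi,\beta)=0$ for every $\beta\in R$, i.e.\ $c_\beta=0$ whenever $(\xi,\beta)\ne 0$; using $\dim\ggg_\alpha=1$ this is exactly $\ker(d\xi)=\ga+\mathrm{span}\{E_\alpha : (\xi,\alpha)=0\}$.
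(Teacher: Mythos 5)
Your proof is correct and is essentially the argument the paper has in mind: the paper gives no written proof, stating only that the lemma ``easily follows from the commutation rules in the Lie algebra $\ggg$'' (citing Humphreys and \cite{AP}), and your case analysis of $d\xi(X,Y)=-\xi([X,Y])$ on the Chevalley basis, isolating the pairs $(X_\alpha,X_{-\alpha})$ with $\xi(H_\alpha)=n(\xi,\alpha)$, is precisely that verification. Your explicit flagging of the sign/normalization conventions is appropriate, since the paper leaves those conventions implicit.
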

By Proposition \ref{perelomov} and Lemma \ref{dxilemma} we obtain the following
\begin{cor}\label{corkoszul}
The $\ad_\gh$-invariant $2$-form $\rho = d\psi$ on the Lie algebra
$\ggg$ has kernel $\gh$.
\end{cor}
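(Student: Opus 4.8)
The plan is to read off the kernel of $d\psi$ directly from the two results just proved: the explicit expression $\psi = 2\sum_{h=1}^{r} a_{i_h}\pi_{i_h}$ with strictly positive coefficients $a_{i_h} = 2 + b_{i_h} > 0$ from Proposition~\ref{perelomov} together with \eqref{koszulcoefficients}, and the formula $\ker(d\xi) = \ga + \Span\{E_\alpha \mid (\xi,\alpha) = 0\}$ from Lemma~\ref{dxilemma}. Thus everything reduces to identifying which roots $\alpha \in R$ satisfy $(\psi,\alpha) = 0$, and verifying that the span of the corresponding root vectors, together with $\ga$, is precisely $\ggg_0 = \gh$.

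The computational heart is a one-line expansion of the pairing. Writing a positive root as $\alpha = \sum_{j=1}^{\ell} m_j\alpha_j$ with all $m_j \ge 0$, and using the defining property $2(\pi_i,\alpha_j)/(\alpha_j,\alpha_j) = \delta_{ij}$ of the fundamental weights, I would obtain
$$(\psi,\alpha) = \sum_{h=1}^{r} a_{i_h}\, m_{i_h}\,(\alpha_{i_h},\alpha_{i_h}).$$
Every term here is non-negative, since $a_{i_h} > 0$, $m_{i_h} \ge 0$ and $(\alpha_{i_h},\alpha_{i_h}) > 0$. Hence $(\psi,\alpha) = 0$ forces $m_{i_h} = 0$ for every crossed node $\alpha_{i_h} \in \Pi^1$; conversely, any root supported entirely on $\Pi^0$ pairs trivially with $\psi$. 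Because $\alpha(d) = \sum_{\alpha_j \in \Pi^1} m_j$ for a positive root, the condition "$\alpha$ supported on $\Pi^0$" coincides with $\alpha \in R_0$, i.e. $\ggg_\alpha \subset \ggg_0$. The pairing is odd under $\alpha \mapsto -\alpha$, so the same conclusion holds for negative roots.

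Assembling this, $\Span\{E_\alpha \mid (\psi,\alpha) = 0,\ \alpha \in R\} = \sum_{\alpha \in R_0}\ggg_\alpha$, and therefore $\ker(d\psi) = \ga + \sum_{\alpha \in R_0}\ggg_\alpha = \ggg_0 = \gh$, which is the claim. I do not expect a serious obstacle: the only delicate point is the sign bookkeeping in the displayed formula, where one must fix the positive system $R^+$ once and for all so that all the $m_j$ are non-negative and no cancellation between crossed and uncrossed contributions can occur. The strict positivity $a_{i_h} > 0$ furnished by Proposition~\ref{perelomov} is exactly what prevents $(\psi,\alpha)$ from vanishing on a root with a nonzero $\Pi^1$-component, so the argument goes through cleanly.
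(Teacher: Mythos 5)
Your proposal is correct and follows essentially the same route as the paper: expand $(\psi,\alpha)$ using Proposition~\ref{perelomov} and the defining property of the fundamental weights to get $\sum_h a_{i_h} m_{i_h}(\alpha_{i_h},\alpha_{i_h})$, conclude that this vanishes exactly for roots supported on $\Pi^0$, and then read off the kernel from Lemma~\ref{dxilemma}. Your explicit remark that all the $m_{i_h}$ share a sign (so no cancellation can occur) is a point the paper leaves implicit, but the argument is the same.
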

\begin{proof} Let $P^1= \{\pi_{i_1},\ldots ,\pi_{i_r}\}$ be fundamental weights which correspond to crossed
simple roots $\Pi^1=\{\alpha_{i_1},\ldots ,\alpha_{i_r}\}$. Then $\psi = 2(a_{i_1}\pi_{i_1}+\cdots + a_{i_r}\pi_{i_r})$. By Lemma
\ref{dxilemma}, in order to
determine the kernel of $\psi$, it is sufficient to describe all roots $\alpha$ with scalar product $(\psi,\alpha)=0$. Recall that
$$
(\pi_{i},\alpha_{i})=\frac{1}{2}(\alpha_{i},\,\alpha_{i})
$$
and the scalar product of $\pi_i$ with the other simple roots is
zero. We can write any root $\alpha$ as
$$
\alpha = k_{1}\alpha_{1}+\cdots + k_{r}\alpha_{i_r} +\beta
$$
where $\beta$ is a linear combination of other simple roots. Then
\begin{eqnarray*}
(\psi,\alpha) &=&2\,(a_{i_1}\pi_{i_1}+\cdots + a_{i_r}\pi_{i_r},\,k_{1}\alpha_{1}+\cdots + k_{r}\alpha_{i_r}+\beta) =\\
&=&\left(k_1a_{i_1} \left((\alpha_{i_1},\alpha_{i_1}\right)+\cdots +k_ra_{i_r} \left(\alpha_{i_r},\alpha_{i_r}\right)\right)\,.
\end{eqnarray*}
This shows that $(\psi,\alpha)=0$ if and only if $k_1=\cdots
=k_r=0$, that is $\alpha\in R^0$. Hence the kernel of $\rho$ is
$\gh$.
\end{proof}
By Corollary \ref{corhomogparakaehler} and Corollary \ref{corkoszul} and the above discussions, we obtain the following
\begin{thm} Let $R$ be a root system of a complex semisimple Lie algebra $\ggg$ with respect to a Cartan subalgebra $\ga$ and
$\ggg =\ggg_{-k}+\cdots + \ggg_k$ the fundamental gradation with
the graded element $d$ associated with a decomposition
$\Pi=\Pi^0\cup\Pi^1$ of a simple root system $\Pi\subset R$. Let
$\sigma$ be an admissible anti-involution of $\ggg$ which defines
the graded real form $\ggg^\sigma$ of $\ggg$, \, $G^\s$  a connected real
semisimple Lie group with the Lie algebra $\ggg^\s$ and
$M=G^\sigma/H^\s $ a covering of a semisimple adjoint orbit $
Ad_{G^\sigma}(d)$.
 Denote by $K$ and $\psi$ the invariant para-complex structure on $M$ and the Koszul form associated with the
 gradation of $\ggg^\sigma$ and by $\rho$ the invariant symplectic form on $M$ defined by $d\psi$.\\
 Then, for any $\lambda\neq 0$, the pair $(K,\lambda\rho)$
is an invariant para-K\"ahler Einstein structure on $M$ and this construction exhausts all
homogeneous para-K\"ahler Einstein manifolds of real semisimple Lie groups.
\end{thm}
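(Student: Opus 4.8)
The plan is to assemble the statement from the structural results already established; the only genuinely new ingredient is the nondegeneracy of $\rho$, and that is where the real work lies. First I would set up the reduction. By the Hou--Deng--Kaneyuki--Nishiyama theorem \cite{HDKN}, any homogeneous para-K\"ahler manifold of a semisimple Lie group is a covering $M=G^\sigma/H^\sigma$ of a semisimple adjoint orbit $\Ad_{G^\sigma}(d)$, so it suffices to treat this case, with $\gh^\sigma=Z_{\ggg^\sigma}(d)$ reductive in $\ggg^\sigma$ and $Z^0_{G^\sigma}(d)\subset H^\sigma\subset Z_{G^\sigma}(d)$. By the classification of \cite{AM} every invariant para-complex structure $K$ on $M$ comes from an $\Ad_{H^\sigma}$-invariant fundamental gradation $\ggg^\sigma=\sum_j\ggg_j^\sigma$ with $\ggg_0^\sigma=\gh^\sigma$; the admissibility of $\sigma$ (Proposition \ref{prop-grad-real}) is precisely the condition placing the grading element $d$ in $\ggg^\sigma$, so this is genuinely a gradation of the real form, whose complexification is the gradation of $\ggg$ attached to $\Pi=\Pi^0\cup\Pi^1$. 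Since $\gh^\sigma$ is reductive the isotropy representation is unimodular, so $M$ carries an invariant volume form $\vol$ (alternatively, the Liouville form of an invariant symplectic structure).

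Next I would identify the Ricci form. By Corollary \ref{corhomogparakaehler}, the Ricci form of any invariant para-K\"ahler structure $(K,\omega)$ on $M$ is the canonical form $\rho_{\mathrm{can}}$ of $(M,K,\vol)$, hence independent of $\omega$; and by Proposition \ref{propkoszul} its pullback satisfies $2\pi^*\rho_{\mathrm{can}}=d\psi$, where $\psi$ is the Koszul form. Thus the form $\rho$ of the statement, whose pullback is $d\psi$, equals $2\rho_{\mathrm{can}}$, so it has the same kernel as $\rho_{\mathrm{can}}$ and is simultaneously (non)degenerate with it. Corollary \ref{corhomogparakaehler} also says that an invariant para-K\"ahler Einstein structure with nonzero scalar curvature on $M$ exists if and only if $\rho_{\mathrm{can}}$ is nondegenerate, and that the Einstein structures are then exactly the pairs $(K,\lambda\rho)$ with $\lambda\neq0$. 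So the whole theorem reduces to one assertion: $\ker(d\psi)=\gh$.

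The hard part is therefore this nondegeneracy, which is exactly Corollary \ref{corkoszul}: Lemma \ref{psilemma} evaluates $\psi=2(\delta^\ggg-\delta^\gh)$ on the Cartan subalgebra $\ga$, Proposition \ref{perelomov} rewrites it as $\psi=2\sum_h a_{i_h}\pi_{i_h}$ with strictly positive coefficients $a_{i_h}=2+b_{i_h}\ge2$ --- the nonnegativity $b_{i_h}\ge0$ being imported from Proposition 4.1 of \cite{AP} --- and Lemma \ref{dxilemma} gives $\ker(d\psi)=\ga+\Span\{E_\alpha:(\psi,\alpha)=0\}$. Since the $\pi_{i_h}$ are the fundamental weights dual to the crossed simple roots $\Pi^1$ that define the gradation and the $a_{i_h}$ are nonzero, $(\psi,\alpha)=0$ forces the $\Pi^1$-component of $\alpha$ to vanish, i.e.\ $\ggg_\alpha\subset\ggg_0=\gh$; hence $\ker(d\psi)=\gh$ and $\rho$ descends to an invariant symplectic form on $M$. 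Feeding this back into Corollary \ref{corhomogparakaehler} yields, for every $\lambda\neq0$, the invariant para-K\"ahler Einstein structure $(K,\lambda\rho)$ with metric $g=(\lambda\rho)\circ K$ and a nonzero Einstein constant proportional to $1/\lambda$ (matching the Introduction once the normalizing factors are tracked); and since $\rho_{\mathrm{can}}$ is nowhere degenerate, no invariant para-K\"ahler metric on $M$ is Ricci-flat, so these are all of them. Combined with the first two steps, which bring an arbitrary homogeneous para-K\"ahler Einstein manifold of a real semisimple group into exactly this form, this proves the construction is exhaustive. Besides the positivity of the $a_{i_h}$, the only points that need care are this normalization of the Einstein constant and the harmless observation that $K$, $\psi$ and $\rho$ are all defined at the Lie-algebra level and so pull back along the covering $M\to\Ad_{G^\sigma}(d)$.
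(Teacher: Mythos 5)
Your proposal is correct and follows essentially the same route as the paper: reduce via the Hou--Deng--Kaneyuki--Nishiyama theorem and the classification of invariant para-complex structures by fundamental gradations, identify the Ricci form with the canonical form $d\psi$ via Corollary \ref{corhomogparakaehler} and Proposition \ref{propkoszul}, and establish nondegeneracy exactly as in Corollary \ref{corkoszul} using Lemma \ref{psilemma}, Proposition \ref{perelomov} and Lemma \ref{dxilemma}. Your explicit tracking of the factor $2$ between $d\psi$ and the canonical form, and the remark ruling out Ricci-flat invariant para-K\"ahler metrics, are details the paper leaves implicit but do not constitute a different argument.
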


\subsection{Examples.}
\begin{comment}
$$
\xymatrix @M=0pt @R=2pt @!C=6pt{
{\circ}\ar
@{--}[r]\ar@/^18pt/@{<->}[rrrrr]& {\circ}\ar
@{-}[r]\ar@/^8pt/@{<->}[rrr]&
{\bullet}\ar @{--}[r]&{\bullet}\ar @{-}[r]&{\circ}\ar @{--}[r]&{\circ}\\
{\alpha_1}&{\alpha_p}&&&{\alpha_{\ell-p+1}}&{\alpha_{\ell}}}
%} \cr
%}
$$
\end{comment}
In this subsection we describe the Koszul form $\psi$ for adjoint
orbits $M=G^\sigma/H=Ad_{G^\sigma}(h)$ of some simple Lie group
$G^\sigma$. In the case of $G_2$ we also indicate the para-K\"ahler
Einstein form $\rho =d\psi$.\vskip.2truecm\noindent
\paragraph{\bf Case  $\ggg = A_\ell = \mathfrak{sl}({n+1}, \mathbb{C})$.}

The root system is $R = \{ \epsilon_i - \epsilon_j\}$ and the  system
of simple roots is $\Pi = \{\alpha_i = \epsilon_i - \epsilon_{i+1} \}$. We will consider two real form of $\ggg$.
\smallskip
\\
i) $G^\s = \SL(\ell +1,\R)$.\\
  We denote by $M = \SL(\ell+1)/H$ the
homogeneous manifold  associated with the
subsystem of simple roots given by \\
$\Pi^1=\{\alpha_{i_1},\ldots
,\alpha_{i_r}\,\,\vert\,\, 1\leq i_1<i_2<\cdots < i_r\leq\ell\}$.\\
\begin{comment}
$$
\xymatrix @M=0pt @R=2pt @!C=6pt{
 \circ \ar@{.}[r] &\APLcirc{\times} \ar@{-}[r] &\APLcirc{\times} \ar@{-}[r]
&\circ \ar@{-}[r] &\circ \\
{\alpha_1}&{\alpha_{i_1}}&&&{\alpha_{\ell-p+1}}&{\alpha_{\ell}}}
$$
\newline
\end{comment}
The manifold $M$ has dimension $ (\ell
+1)^2-\displaystyle\sum_{k=1}^{r+1}(i_{k}-i_{k-1})^2\,, $ where we
assume that $i_0=0$ and $i_{r+1}=\ell +1$. Taking into account
formula \eqref{koszulform}, we get the Koszul form
$$
\psi = 2\sum_{k=1}^r(i_{k+1}-i_{k-1})\pi_{i_k}\,.
$$
ii) $ G^\s= \SL(2,\quater)$ and
$\Pi^1=\{\alpha_2\}$. \\
This case corresponds to the crossed Satake
diagram
$$
\xymatrix @M=0pt @R=2pt @!C=6pt{
 \bullet \ar@{-}[r] &\APLcirc{\times} \ar@{-}[r] &\bullet }
$$
The manifold $M$ has dimension $8$. According to \eqref{koszulcoefficients}, we have $b_2=2$ and therefore
$a_2 =4$. Hence the Koszul form is
$$
\psi =2a_2\pi_2= 8\pi_2=4\alpha_1+8\alpha_2+4\alpha_3 \,.
$$
Taking into account lemma \ref{dxilemma}, by a direct computation we get
$$
\begin{array}{lll}
\rho &=& 8\left(\omega^{\alpha_2}\wedge\omega^{-\alpha_2}+\omega^{\alpha_1+\alpha_2}\wedge\omega^{-(\alpha_1+\alpha_2)}+
\omega^{\alpha_2+\alpha_3}\wedge\omega^{-(\alpha_2+\alpha_3)}+\right. \\[6pt]
&{}&+ \left.\omega^{\alpha_1+\alpha_2+\alpha_3}\wedge\omega^{-(\alpha_1+\alpha_2+\alpha_3)}
\right)\,.
\end{array}
$$

\paragraph{\bf  Case  of the  complex exceptional Lie algebra \bf $\ggg = \ggg_2 $}

The system of  simple roots can be  written as $\Pi = \{ \alpha_1, \alpha_2 \}$ and the associated system of positive roots  is
$$
R^+=\{\alpha_1,\alpha_2,\alpha_1+\alpha_2,2\alpha_1+\alpha_2,
3\alpha_1+\alpha_2,3\alpha_1+2\alpha_2\}.
$$
We consider the normal real form  $ (\ggg_2)^\s$ of $\ggg_2$  and  denote by
 $G^\s = G_2$   the corresponding  simple Lie group.
\newline The fundamental weights are
$$
\pi_1 =2\alpha_1 +\alpha_2\,, \quad
\pi_2 =3\alpha_1 +2\alpha_2\,.
$$
We have the following three cases:
\begin{itemize}
\item[i)]\
$
\xymatrix @M=0pt @R=2pt @!C=6pt{
{\APLcirc{\times}}\ar@{=}[r]|{\SelectTips{cm}{}\dir{>}}\ar@{-}[r]&{\circ}\\
{\alpha_1}&{\alpha_2} }\qquad
$
$\Pi^1=\{\alpha_1\}\,$,\vskip.3truecm\noindent
\item[ii)]\
$
\xymatrix @M=0pt @R=2pt @!C=6pt{
{\circ}\ar@{=}[r]|{\SelectTips{cm}{}\dir{>}}\ar@{-}[r]&{\APLcirc{\times}}\\
{\alpha_1}&{\alpha_2} }\qquad
$
$\Pi^1=\{\alpha_2\}\,$,\vskip.3truecm\noindent
\item[iii)]\
$
\xymatrix @M=0pt @R=2pt @!C=6pt{
{\APLcirc{\times}}\ar@{=}[r]|{\SelectTips{cm}{}\dir{>}}\ar@{-}[r]&{\APLcirc
{\times}}\\
{\alpha_1}&{\alpha_2} }\qquad
$
$\Pi^1=\{\alpha_1,\alpha_2\}\,$.\vskip.3truecm\noindent
\end{itemize}
i) The manifold $M$ has dimension $10$. By applying formula \eqref{koszulcoefficients}, we obtain
$b_1 =3$ and consequently $a_1=5$. Therefore, by formula \eqref{koszulform},
the Koszul form $\psi$ can be expressed as
$$
\psi = 10 \pi_1=10\left(2\alpha_1+\alpha_2\right)\,.
$$
By Lemma \ref{dxilemma}, it follows that the para-K\"ahler Einstein form $\rho$ is given by
$$
\begin{array}{lll}
\rho &=& 10\left(\omega^{\alpha_1}\wedge\omega^{-\alpha_1}+\omega^{\alpha_1+\alpha_2}\wedge\omega^{-(\alpha_1+\alpha_2)}+
2\,\omega^{2\alpha_1+\alpha_2}\wedge\omega^{-(2\alpha_1+\alpha_2)}+\right. \\[6pt]
&{}& +\left.\omega^{3\alpha_1+\alpha_2}\wedge\omega^{-(3\alpha_1+\alpha_2)}+\omega^{3\alpha_1+2\alpha_2}\wedge\omega^{-(3\alpha_1+2\alpha_2)}
\right)\,.
\end{array}
$$
ii) The manifold $M$ has dimension $10$. By \eqref{koszulcoefficients}, we get
$b_2=1$ and consequently $a_2=3$. Hence, according
to \eqref{koszulform} and Lemma \ref{dxilemma}, the Koszul form $\psi$ and the para-K\"ahler Einstein form $\rho$ are
given respectively by
$$
\psi = 6 \pi_2 =6\left(3\alpha_1+2\alpha_2\right)
$$
and
$$
\begin{array}{lll}
\rho &=& 6\left(\omega^{\alpha_2}\wedge\omega^{-\alpha_2}+3\,\omega^{\alpha_1+\alpha_2}\wedge\omega^{-(\alpha_1+\alpha_2)}+
3\,\omega^{2\alpha_1+\alpha_2}\wedge\omega^{-(2\alpha_1+\alpha_2)}+\right. \\[6pt]
&{}& +\left.\omega^{3\alpha_1+\alpha_2}\wedge\omega^{-(3\alpha_1+\alpha_2)}+2\,\omega^{3\alpha_1+2\alpha_2}\wedge\omega^{-(3\alpha_1+2\alpha_2)}
\right)\,.
\end{array}
$$
iii) The manifold $M$ has dimension $12$. In this case we get
$$
\psi = 4(\pi_1+\pi_2)=4(5\alpha_1 +3\alpha_2)
$$
and
$$
\begin{array}{lll}
\rho &=& 4 \left(\omega^{\alpha_1}\wedge\omega^{-\alpha_1}+ \omega^{\alpha_2}\wedge\omega^{-\alpha_2}+
4\,\omega^{\alpha_1+\alpha_2}\wedge\omega^{-(\alpha_1+\alpha_2)}+\right.\\[6pt]
&{}&+\left. 5\,\omega^{2\alpha_1+\alpha_2}\wedge\omega^{-(2\alpha_1+\alpha_2)}+
2\,\omega^{3\alpha_1+\alpha_2}\wedge\omega^{-(3\alpha_1+\alpha_2)}+\right.\\[6pt]
&{}&+\left.3\,\omega^{3\alpha_1+2\alpha_2}\wedge\omega^{-(3\alpha_1+2\alpha_2)}
\right)\,.
\end{array}
$$

\end{document}